\newcommand{\inv}{^{-1}}
\newcommand{\bbC}{\mathbb{C}}
\newcommand{\bbD}{\mathbb{D}}
\newcommand{\bbH}{\mathbb{H}}
\newcommand{\bbR}{\mathbb{R}}
\newcommand{\bbS}{\mathbb{S}}
\newcommand{\bbT}{\mathbb{T}}
\newcommand{\calP}{\mathcal{P}}
\newcommand{\jdIntersection}{\partial K \cap \partial \tilde K}
\newcommand{\jdUnion}{\partial K \cup \partial \tilde K}
\newcommand{\fNone}{F_\varnothing}
\newcommand{\fBoth}{F_\cap}
\newcommand{\cpc}{\operatorname{CPC}}
\theoremstyle{plain}
\newtheorem{lemma}{Lemma}[section]
\newtheorem{fact}[lemma]{Fact}
\newtheorem{observation}[lemma]{Observation}
\newtheorem{theorem}[lemma]{Theorem}
\newtheorem{lemmafree}{Lemma}
\theoremstyle{definition}
\newtheorem{definition}[lemma]{Definition}
\newtheorem*{definition*}{Definition}
\newtheorem{remark}[lemma]{Remark}
\newtheorem*{remark*}{Remark}
\newtheorem{additivity}[lemma]{Index Additivity Lemma}
\newtheorem{cil}[lemma]{Circle Index Lemma}
\newtheorem{incompat}[lemma]{Incompatibility Theorem}
\newtheorem{threep}[lemma]{Three Point Prescription Lemma}
\begin{document}

\title[Fixed-point index and torus parametrization]{Fixed-point index, the Incompatibility Theorem, and torus parametrization}
\author{Andrey M.\ Mishchenko}
\thanks{The author was partially supported by NSF grants DMS-0456940, DMS-0555750, DMS-0801029, DMS-1101373.}
\subjclass[2010]{54H25 (primary), 52C26 (secondary)}
\email{mishchea@gmail.com}
\date{\today}

\begin{abstract}
The fixed-point index of a homeomorphism of Jordan curves measures the number of fixed-points, with multiplicity, of the extension of the homeomorphism to the full Jordan domains in question. The now-classical Circle Index Lemma says that the fixed-point index of a positive-orientation-preserving homeomorphism of round circles is always non-negative. We begin by proving a generalization of this lemma, to accommodate Jordan curves bounding domains which do not disconnect each other. We then apply this generalization to give a new proof of Schramm's Incompatibility Theorem, which was used by Schramm to give the first proof of the rigidity of circle packings filling the complex and hyperbolic planes. As an example application, we include outlines of proofs of these circle packing theorems.

We then introduce a new tool,
the so-called torus para\-metri\-zation,
for working with fixed-point index, which allows some problems concerning this quantity to be approached combinatorially. We apply torus parametrization to give the first purely topological proof of the following lemma: given two positively oriented Jordan curves, one may essentially prescribe the images of three points of one of the curves in the other, and obtain an orientation-preserving homeomorphism between the curves, having non-negative fixed-point index, which respects this prescription. This lemma is essential to our proof of the Incompatibility Theorem.
\end{abstract}

\maketitle

\tableofcontents

\section{Introduction}
\label{chap:intro fpi}

This article is concerned with a topological quantity, the so-called \emph{fixed-point index} of a homeomorphism of Jordan curves, which has proven useful in the study of various areas of complex analysis. We begin with its definition:

\begin{definition}
A \emph{Jordan curve} is a homeomorphic image of a topological circle $\bbS^1$ in the complex plane $\bbC$. A \emph{Jordan domain} is a bounded open set in $\bbC$ with Jordan curve boundary. We use the term \emph{closed Jordan domain} or \emph{compact Jordan domain} to refer to the closure of a Jordan domain. We define the \emph{positive orientation} on a Jordan curve as usual. That is, if $K$ is a closed Jordan domain, then as we traverse $\partial K$ in what we call the \emph{positive} direction, the interior of $K$ stays to the left.

Let $K$ and $\tilde K$ be closed Jordan domains. Let $\phi:\partial K \to \partial \tilde K$ be a homeomorphism of Jordan curves which is fixed-point-free and orientation-preserving. We call such a homeomorphism \emph{indexable}. Then $\{\phi(z) - z\}_{z\in \partial K}$ is a closed curve in the plane which misses the origin. It has a natural orientation induced by traversing $\partial K$ positively. Then we define the \emph{fixed-point index} of $\phi$, denoted $\eta(\phi)$, to be the winding number of $\{\phi(z) - z\}_{z\in \partial K}$ around the origin.
\end{definition}

Two examples are shown in Figures \ref{fig:ex disjoint then winding 0} and \ref{fig:ex neg winding forced}. We remark that the fixed-point index depends crucially on the choice of homeomorphism, and also on the way that the sets $K$ and $\tilde K$ are juxtaposed. It is a worthwhile exercise to construct an indexable homeomorphism $\partial K\to \partial \tilde K$, for $K$ and $\tilde K$ as in Figure \ref{fig:ex neg winding forced}, having fixed-point index unequal to $-1$.\medskip

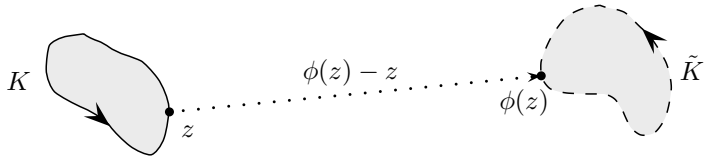
\begin{figure}
\centering
\scalebox{1} 
{
\begin{pspicture}(0,-1.02)(10.129063,1.015)
\definecolor{color31b}{rgb}{0.9215686274509803,0.9215686274509803,0.9215686274509803}
\usefont{T1}{ppl}{m}{n}
\rput(0.28453124,-0.015){$K$}
\usefont{T1}{ppl}{m}{n}
\rput(7.004531,-0.295){$\phi(z)$}
\usefont{T1}{ppl}{m}{n}
\rput(2.5345314,-0.675){$z$}
\usefont{T1}{ppl}{m}{n}
\rput(4.6945314,0.105){$\phi(z) - z$}
\pscustom[linewidth=0.02,fillstyle=solid,fillcolor=color31b]
{
\newpath
\moveto(1.15,0.615)
\lineto(0.94,0.585)
\curveto(0.835,0.57)(0.71,0.505)(0.69,0.455)
\curveto(0.67,0.405)(0.65,0.25)(0.65,0.145)
\curveto(0.65,0.04)(0.71,-0.115)(0.77,-0.165)
\curveto(0.83,-0.215)(1.02,-0.32)(1.15,-0.375)
\curveto(1.28,-0.43)(1.455,-0.55)(1.5,-0.615)
\curveto(1.545,-0.68)(1.675,-0.81)(1.76,-0.875)
\curveto(1.845,-0.94)(1.985,-1.0)(2.04,-0.995)
\curveto(2.095,-0.99)(2.185,-0.875)(2.22,-0.765)
\curveto(2.255,-0.655)(2.285,-0.45)(2.28,-0.355)
\curveto(2.275,-0.26)(2.2,-0.07)(2.13,0.025)
\curveto(2.06,0.12)(1.95,0.235)(1.91,0.255)
\curveto(1.87,0.275)(1.765,0.315)(1.7,0.335)
\curveto(1.635,0.355)(1.53,0.405)(1.49,0.435)
\curveto(1.45,0.465)(1.38,0.505)(1.35,0.515)
\curveto(1.32,0.525)(1.27,0.545)(1.25,0.555)
\curveto(1.23,0.565)(1.2,0.575)(1.15,0.615)
}
\psline[linestyle=none,linewidth=0.1,arrows=->](1.28,-0.43)(1.5,-0.615)

\pscustom[linewidth=0.02,fillstyle=solid,fillcolor=color31b,linestyle=dashed,dash=0.16cm 0.16cm]
{
\newpath
\moveto(8.01,0.995)
\lineto(7.84,0.965)
\curveto(7.755,0.95)(7.615,0.885)(7.56,0.835)
\curveto(7.505,0.785)(7.4,0.64)(7.35,0.545)
\curveto(7.3,0.45)(7.24,0.3)(7.23,0.245)
\curveto(7.22,0.19)(7.225,0.085)(7.24,0.035)
\curveto(7.255,-0.015)(7.31,-0.09)(7.35,-0.115)
\curveto(7.39,-0.14)(7.515,-0.17)(7.6,-0.175)
\curveto(7.685,-0.18)(7.815,-0.185)(7.86,-0.185)
\curveto(7.905,-0.185)(8.0,-0.185)(8.05,-0.185)
\curveto(8.1,-0.185)(8.185,-0.225)(8.22,-0.265)
\curveto(8.255,-0.305)(8.31,-0.395)(8.33,-0.445)
\curveto(8.35,-0.495)(8.39,-0.58)(8.41,-0.615)
\curveto(8.43,-0.65)(8.495,-0.705)(8.54,-0.725)
\curveto(8.585,-0.745)(8.68,-0.735)(8.73,-0.705)
\curveto(8.78,-0.675)(8.86,-0.56)(8.89,-0.475)
\curveto(8.92,-0.39)(8.955,-0.21)(8.96,-0.115)
\curveto(8.965,-0.02)(8.925,0.165)(8.88,0.255)
\curveto(8.835,0.345)(8.75,0.485)(8.71,0.535)
\curveto(8.67,0.585)(8.6,0.66)(8.57,0.685)
\curveto(8.54,0.71)(8.48,0.77)(8.45,0.805)
\curveto(8.42,0.84)(8.36,0.89)(8.33,0.905)
\curveto(8.3,0.92)(8.235,0.94)(8.2,0.945)
\curveto(8.165,0.95)(8.115,0.955)(8.01,0.995)
}
\psline[linestyle=none,linewidth=0.1,arrows=->](8.67,0.585)(8.57,0.685)

\psline[linewidth=0.04cm,fillcolor=color31b,linestyle=dotted,dotsep=0.16cm,arrowsize=0.05291667cm 2.0,arrowlength=1.4,arrowinset=0.4]{->}(2.29,-0.425)(7.23,0.055)
\psdots[dotsize=0.12](2.29,-0.425)
\psdots[dotsize=0.12](7.23,0.055)
\usefont{T1}{ppl}{m}{n}
\rput(9.234531,0.165){$\tilde K$}
\end{pspicture} 
}

\caption[Two closed Jordan domains $K$ and $\tilde K$ so that any indexable homeomorphism $\phi:\partial K \to \partial \tilde K$ satisfies $\eta(f) = 0$]{
{\bf Two closed Jordan domains $K$ and $\tilde K$ so that any indexable homeomorphism $\phi:\partial K \to \partial \tilde K$ satisfies $\eta(\phi) = 0$.}  The arrows on $\partial K$ and $\partial \tilde K$ indicate the positive orientations on these Jordan curves. In this case $\phi$ is indexable so long as it is orientation-preserving; the fixed-point-free condition is automatic because $\partial K$ and $\partial \tilde K$ do not meet. The dashed arrow represents a vector of the form $\phi(z) - z$. The vector $\phi(z) - z$ must always point ``to the right,'' so the curve $\{\phi(z) - z\}_{z\in \partial K}$ has winding number $0$ around the origin, thus $\eta(\phi) = 0$.
}
\label{fig:ex disjoint then winding 0}
\end{figure}

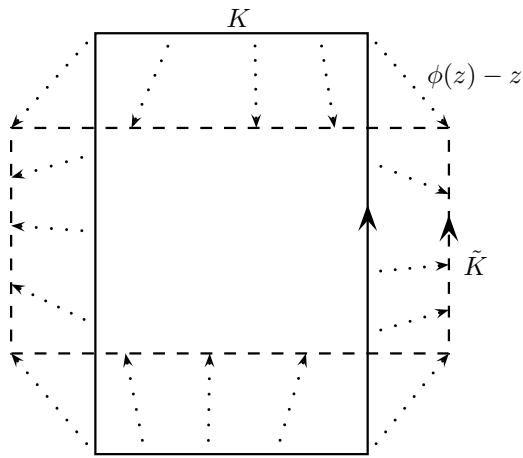
\begin{figure}
\centering
\scalebox{1} 
{
\begin{pspicture}(0,-3.0367188)(6.9990625,3.0567188)
\definecolor{color96b}{rgb}{0.9215686274509803,0.9215686274509803,0.9215686274509803}
\usefont{T1}{ppl}{m}{n}
\rput(6.174531,-0.46671876){$\tilde K$}
\usefont{T1}{ppl}{m}{n}
\rput(3.0245314,2.7932813){$K$}
\psframe[linewidth=0.03,linestyle=dashed,dash=0.16cm 0.16cm,dimen=middle](5.82,1.3232813)(0.0,-1.6767187)
\psframe[linewidth=0.03,dimen=middle](4.74,2.5832813)(1.12,-3.0167189)
\psline[linewidth=0.04cm,fillcolor=color96b,linestyle=dotted,dotsep=0.16cm,arrowsize=0.05291667cm 2.0,arrowlength=1.4,arrowinset=0.4]{<-}(5.82,1.3232813)(4.74,2.5832813)
\psline[linewidth=0.04cm,fillcolor=color96b,linestyle=dotted,dotsep=0.16cm,arrowsize=0.05291667cm 2.0,arrowlength=1.4,arrowinset=0.4]{<-}(4.3,1.3232813)(4.1,2.5832813)
\psline[linewidth=0.04cm,fillcolor=color96b,linestyle=dotted,dotsep=0.16cm,arrowsize=0.05291667cm 2.0,arrowlength=1.4,arrowinset=0.4]{<-}(3.26,1.3232813)(3.24,2.5832813)
\psline[linewidth=0.04cm,fillcolor=color96b,linestyle=dotted,dotsep=0.16cm,arrowsize=0.05291667cm 2.0,arrowlength=1.4,arrowinset=0.4]{<-}(1.6,1.3232813)(2.16,2.5832813)
\psline[linewidth=0.04cm,fillcolor=color96b,linestyle=dotted,dotsep=0.16cm,arrowsize=0.05291667cm 2.0,arrowlength=1.4,arrowinset=0.4]{<-}(0.0,1.3232813)(1.12,2.5832813)
\psline[linewidth=0.04cm,fillcolor=color96b,linestyle=dotted,dotsep=0.16cm,arrowsize=0.05291667cm 2.0,arrowlength=1.4,arrowinset=0.4]{<-}(0.0,0.66328126)(1.12,0.98328125)
\psline[linewidth=0.04cm,fillcolor=color96b,linestyle=dotted,dotsep=0.16cm,arrowsize=0.05291667cm 2.0,arrowlength=1.4,arrowinset=0.4]{<-}(0.0,0.02328125)(1.12,-0.05671875)
\psline[linewidth=0.04cm,fillcolor=color96b,linestyle=dotted,dotsep=0.16cm,arrowsize=0.05291667cm 2.0,arrowlength=1.4,arrowinset=0.4]{<-}(0.0,-0.75671875)(1.12,-1.2967187)
\psline[linewidth=0.04cm,fillcolor=color96b,linestyle=dotted,dotsep=0.16cm,arrowsize=0.05291667cm 2.0,arrowlength=1.4,arrowinset=0.4]{<-}(0.0,-1.6767187)(1.12,-3.0167189)
\psline[linewidth=0.04cm,fillcolor=color96b,linestyle=dotted,dotsep=0.16cm,arrowsize=0.05291667cm 2.0,arrowlength=1.4,arrowinset=0.4]{<-}(1.52,-1.6767187)(1.78,-3.0167189)
\psline[linewidth=0.04cm,fillcolor=color96b,linestyle=dotted,dotsep=0.16cm,arrowsize=0.05291667cm 2.0,arrowlength=1.4,arrowinset=0.4]{<-}(2.64,-1.6767187)(2.62,-3.0167189)
\psline[linewidth=0.04cm,fillcolor=color96b,linestyle=dotted,dotsep=0.16cm,arrowsize=0.05291667cm 2.0,arrowlength=1.4,arrowinset=0.4]{<-}(3.92,-1.6767187)(3.52,-3.0167189)
\psline[linewidth=0.04cm,fillcolor=color96b,linestyle=dotted,dotsep=0.16cm,arrowsize=0.05291667cm 2.0,arrowlength=1.4,arrowinset=0.4]{<-}(5.82,-1.6767187)(4.74,-3.0167189)
\psline[linewidth=0.04cm,fillcolor=color96b,linestyle=dotted,dotsep=0.16cm,arrowsize=0.05291667cm 2.0,arrowlength=1.4,arrowinset=0.4]{<-}(5.82,-1.0967188)(4.74,-1.4167187)
\psline[linewidth=0.04cm,fillcolor=color96b,linestyle=dotted,dotsep=0.16cm,arrowsize=0.05291667cm 2.0,arrowlength=1.4,arrowinset=0.4]{<-}(5.82,-0.49671876)(4.74,-0.5967187)
\psline[linewidth=0.04cm,fillcolor=color96b,linestyle=dotted,dotsep=0.16cm,arrowsize=0.05291667cm 2.0,arrowlength=1.4,arrowinset=0.4]{<-}(5.82,0.44328126)(4.74,0.88328123)

\psline[linewidth=0.1cm,linestyle=none]{->}(4.74,0.00328125)(4.74,0.30328125)
\psline[linewidth=0.1cm,linestyle=none]{->}(5.82,-0.03671875)(5.82,0.16328125)
\usefont{T1}{ptm}{m}{n}
\rput(6.164531,2.0132813){$\phi(z) - z$}
\end{pspicture} 
}
\caption[An indexable homeomorphism $\phi:\partial K \to \partial \tilde K$ so that $\eta(\phi) = -1$]
{
\label{fig:ex neg winding forced}
{\bf An indexable homeomorphism $\phi:\partial K \to \partial \tilde K$ so that $\eta(\phi) = -1$.}  Suppose we insist that $\phi$ identifies the respective corners as shown. Then tracing the path of the dashed vectors $\phi(z) - z$ as $z$ traverses $\partial K$ positively, we see that $\phi(z) - z$ must wind once clockwise around the origin, thus $\eta(\phi)=-1$.
}
\end{figure}

Fixed-point index has found applications for example in the theories of circle packing \cite{MR1207210}, Koebe uniformization \cite{MR1207210}, and Sierpinski carpets \cite{MR2900233}*{Section 12}. In all of these settings, it has been applied to prove powerful existence, rigidity, and uniformization statements. Most recently, the current author has used fixed-point index, including torus parametrization, to prove rigidity statements for collections of possibly-overlapping round disks, see \citelist{\cite{mishchenko-dissertation} \cite{mishchenko-rigidity-2012}}.\medskip

The fixed-point index measures the following topological quantity: suppose that $K$ and $\tilde K$ are closed Jordan domains, and $\Phi : K \to \tilde K$ is a homeomorphism, having finitely many fixed points, which restricts to an indexable homeomorphism $\partial\Phi : \partial K \to \partial \tilde K$. There is a well-understood notion of the \emph{multiplicity} of a fixed point of $\Phi$. Then the fixed-point index $\eta(\partial\Phi)$ counts the number of fixed-points of $\Phi$, with multiplicity. For more discussion along these lines, see \cite{MR1207210}*{Section 2}.\medskip

In this article, we describe a new technique for working with fixed-point index, which we call the \emph{torus parametrization} of a pair of Jordan curves, defined in Section \ref{chap:torus}. We apply torus parametrization to give a new, elementary proof of the following fundamental lemma:

\begin{threep}
\label{threep}
\label{3p}
\label{lem:3p}
\label{lem:three points}
\label{tppl}
Let $K$ and $\tilde K$ be compact Jordan domains in transverse position, with boundaries oriented positively. Let $z_1,z_2,z_3\in \partial K\setminus \partial \tilde K$ appear in counterclockwise order, similarly $\tilde z_1,\tilde z_2, \tilde z_3\in \partial \tilde K\setminus \partial K$. Then there is an indexable homeomorphism $\phi:\partial K \to \partial \tilde K$ sending $z_i \mapsto \tilde z_i$ for $i=1,2,3$, so that $\eta(\phi) \ge 0$.
\end{threep}

\noindent Two Jordan domains are in \emph{transverse position} if their boundary Jordan curves cross wherever they meet, c.f.\ Definition \ref{transverse position} in Section \ref{sec background}. The example given in Figure \ref{fig:ex neg winding forced} shows that if we prescribe the images of four points, then a negative fixed-point index may be forced.

A version of the Three Point Prescription Lemma \ref{threep} is stated in \cite{MR2131318}*{Lemma 8.14}, but we have not been able to fill in the details of the argument. The idea of the approach is as follows: first, any Riemann mapping $\Phi:\Omega \to \tilde\Omega$ between open Jordan domains having non-self-intersecting boundaries extends to a homeomorphism $\partial \Phi :  \partial \Omega \to \partial \tilde \Omega$ of their boundaries, and we may prescribe the images of three points of $\partial \Omega$ in $\partial \tilde \Omega$ by post-composing with self-biholomorphisms of $\tilde\Omega$. Next, it is known that any isolated fixed point of a holomorphic map has non-negative multiplicity, see \cite{MR1207210}*{Section 2}. Thus the map $\partial \Phi$, if it is indexable, has non-negative fixed-point index, because the fixed-point index of $\partial \Phi$ counts the fixed points of $\Phi$ with multiplicity, completing the argument. However, it is not clear how to deal with possible fixed points in the induced boundary map $\partial \Phi$. Our proof of Lemma \ref{3p} uses only induction and plane topology arguments and is given in Section \ref{sec:3p}.

For a discussion on the strength of the hypotheses of
Lemma \ref{tppl}, refer to
Remark \ref{closing-remark}
at the end of the article.\medskip

We also state and prove a new fundamental lemma on fixed-point index, generalizing the well-known Circle Index Lemma \ref{cil} which states that the fixed-point index of an indexable homeomorphism between circles is always non-negative. The Circle Index Lemma was a crucial ingredient in all of the applications of fixed-point index described above. In our generalization, round disks are replaced by closed Jordan domains which do not disconnect each other. In particular, the closed Jordan domains $K$ and $\tilde K$ are said to \emph{cut each other} if $K\setminus \tilde K$ or $\tilde K \setminus K$ is disconnected. Then:

\begin{lemma}
\label{no cut index}
Let $K$ and $\tilde K$ be closed Jordan domains in transverse position, which do not cut each other, having boundaries oriented positively. Let $\phi : \partial K \to \partial \tilde K$ be an indexable homeomorphism. Then $\eta(\phi) \ge 0$.
\end{lemma}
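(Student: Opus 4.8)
The plan is to deduce the lemma from the Circle Index Lemma \ref{cil} by localizing the fixed-point index to the components of $K\cap\tilde K$ and straightening each of these to a configuration of round disks. As a preliminary reduction I would dispose of the case in which $\partial K\cap\partial\tilde K$ is infinite (by a limiting argument, or by observing that, since $K$ and $\tilde K$ do not cut each other, infinitely many crossings would force infinitely many components of $K\setminus\tilde K$ or of $\tilde K\setminus K$), so that $\partial K$ and $\partial\tilde K$ cross at finitely many points. If they do not cross at all, then either $K$ and $\tilde K$ are disjoint — and $\eta(\phi)=0$, since a separating line keeps $\phi(z)-z$ in a half-plane — or one lies in the interior of the other, and $\eta(\phi)=1$ by the straight-line homotopy from $\phi(z)-z$ to $\phi(z)-a$ with $a$ in the inner domain (carried out after an ambient homeomorphism making the inner domain convex). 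Otherwise the essential structural consequence of the no-cut hypothesis is that each component $\Lambda_1,\dots,\Lambda_m$ of $K\cap\tilde K$ is a closed topological disk whose boundary consists of a single arc of $\partial K$ and a single arc of $\partial\tilde K$ meeting at exactly two crossings; a component bounded by more arcs — like the central square of Figure \ref{fig:ex neg winding forced} — would disconnect $K\setminus\tilde K$ or $\tilde K\setminus K$.

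I would then choose a homeomorphism $\Phi\colon K\to\tilde K$ extending $\phi$ and having only finitely many fixed points, all necessarily lying in $\bigcup_j\Lambda_j$, extend it slightly to a homeomorphism of marginally larger Jordan domains that is still fixed-point-free outside $\bigcup_j\Lambda_j$, and surround each $\Lambda_j$ by a loop $\sigma_j$ on which $\Phi$ has no fixed point, the $\sigma_j$ chosen mutually unlinked and jointly enclosing $\partial K$. Index additivity then yields $\eta(\phi)=\sum_{j=1}^m I_j$, where $I_j$ is the winding number of $\Phi(z)-z$ about the origin as $z$ traverses $\sigma_j$, so it is enough to prove $I_j\ge 0$ for each $j$.

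Fix $j$. The goal is to realize $I_j$ as the fixed-point index of a homeomorphism between round circles. Since $\partial\Lambda_j$ is built from exactly one arc of each curve — precisely the picture produced by two round disks overlapping in a single lens — one should be able to choose an orientation-preserving homeomorphism of the plane that straightens the two arcs bounding $\Lambda_j$ to circular arcs and reroutes the remaining, far portions of $\partial K$ and $\partial\tilde K$ into round circles $\partial D_j$ and $\partial\tilde D_j$ meeting only at the two crossings of $\Lambda_j$, altering $\Phi$ only on a collar so that it still restricts to the original map on $\sigma_j$; this produces a homeomorphism $\Phi_j\colon D_j\to\tilde D_j$ with $I_j=\eta(\partial\Phi_j)\ge 0$ by the Circle Index Lemma \ref{cil}, and summing over $j$ finishes the proof. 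I expect the last step to be the main obstacle: one must reroute without creating spurious crossings and, more importantly, verify that $I_j$ is genuinely unchanged — equivalently, that $I_j$ depends only on the configuration near $\Lambda_j$. This is exactly where the no-cut hypothesis is indispensable, since it is precisely the condition guaranteeing that the local picture at each $\Lambda_j$ is one that a pair of round disks can exhibit; when $K$ and $\tilde K$ cut each other no such reduction exists, and Figure \ref{fig:ex neg winding forced} shows that the conclusion then genuinely fails.
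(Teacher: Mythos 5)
Your structural observations are sound: the no-cut hypothesis does force every component $\Lambda_j$ of $K\cap\tilde K$ to be a bigon bounded by one arc of $\partial K$ and one of $\partial\tilde K$, and the localization $\eta(\phi)=\sum_j I_j$ can be justified by a standard homology argument once $\phi$ is extended to a slightly larger domain without boundary fixed points. The genuine gap is in the decisive step, proving $I_j\ge 0$, and it is not merely technical: the quantity $I_j$ is not an invariant of the data $(K,\tilde K,\phi)$ at all, but depends on the extension $\Phi$, which you chose subject to no condition beyond finiteness of its fixed point set. Nothing prevents an extension having, say, a single saddle-type fixed point (local index $-1$) in one lens, compensated by index $+1$ in another lens, so that some $I_j<0$ while $\sum_j I_j=\eta(\phi)$ is unchanged. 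For such a $\Phi$ the object you want to build — a homeomorphism $\Phi_j\colon D_j\to\tilde D_j$ of round disks agreeing with $\Phi$ on $\sigma_j$ and fixed-point free outside the $\sigma_j$-disk — cannot exist, since its existence would contradict the Circle Index Lemma \ref{cil}. So the rerouting step is impossible for a badly chosen $\Phi$, you give no recipe for a good choice, and in effect the existence of the rerouted extension is equivalent to the inequality you are trying to prove.

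There is a second obstruction hiding in the phrase ``altering $\Phi$ only on a collar.'' The rerouting is not an instance of Lemma \ref{prop:index invariance}: it changes the topological configuration (it destroys the other $2m-2$ crossings), so $\Phi$ genuinely has to be rebuilt on $D_j$ minus the $\sigma_j$-disk, and one must do so without creating fixed points there. Worse, $\Phi(\sigma_j)$ lies somewhere in $\tilde K$ but in general nowhere near $\Lambda_j$, whereas $\tilde D_j$ agrees with $\tilde K$ only near the lens; hence the containment $\Phi(\sigma_j)\subset\tilde D_j$ needed even to state ``$\Phi_j$ restricts to the original map on $\sigma_j$'' can fail. The paper sidesteps localization entirely: Lemma \ref{uniqa} shows the no-cut hypothesis determines the global configuration of $\{K,\tilde K\}$ up to ambient homeomorphism given the number of crossings, Lemma \ref{prop:index invariance} then reduces to the single explicit picture of Figure \ref{no cut index figure}, and there a direct inspection shows that the curve $\phi(z)-z$ can cross the positive real axis only in the positive direction, whence $\eta(\phi)\ge 0$. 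If you want to salvage your outline, the missing ingredient is a construction of a specific extension (or a proof that the per-lens indices of some extension are controlled); as written, the per-lens inequality is asserted exactly where the whole difficulty of the lemma is concentrated.
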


\noindent The proof appears at the end of Section \ref{incompat section}.\medskip

As an example of the power of fixed-point index, we apply the Three Point Prescription Lemma \ref{threep} and Lemma \ref{no cut index} to prove a version of the Incompatibility Theorem of Schramm \cite{MR1076089}*{Theorem 3.1}, as described in Section \ref{incompat section}. The Incompatibility Theorem is then easily applied in Section \ref{cp proofs} to prove some well-known rigidity theorems for circle packings. The ideas for these proofs are borrowed from \citelist{\cite{MR1207210} \cite{MR2131318}*{Chapter 8}}.\medskip

\noindent {\bf Acknowledgments.}
Thanks to Jordan Watkins for many fruitful discussions,
especially for pointing us strongly in the direction of
what we call \emph{torus parametrization}.
Thanks also to Mario Bonk for many fruitful discussions,
especially for suggestions that greatly simplified
the proof of Lemma \ref{no cut index}.
Thanks finally to the anonymous referee,
who found several errors in an earlier version of the article,
and whose suggestions and comments improved the exposition
and led to a greatly simplified
proof of the Three Point Prescription Lemma \ref{tppl}.

\section{Background lemmas and definitions}
\label{sec background}

In the upcoming discussion it will be useful to have access to two well-known lemmas on fixed-point index. This section is devoted to introducing these two lemmas.\medskip

Our first background lemma says essentially that ``the fixed-point index between two circles is always non-negative'':

\begin{cil}
\label{cil}
\label{lem:circle index lemma}
Let $K$ and $\tilde K$ be closed Jordan domains in $\bbC$, with boundaries oriented positively, and let $\phi:\partial K \to \partial \tilde K$ be an indexable homeomorphism. Then the following hold.
\begin{enumerate}
\item We have $\eta(\phi)=\eta(\phi^{-1})$.
\item If $K\subseteq \tilde K$ or $\tilde K\subseteq K$, then $\eta(\phi)=1$.
\item If $K$ and $\tilde K$ have disjoint interiors, then $\eta(\phi) = 0$.
\item If $\partial K$ and $\partial \tilde K$ intersect in exactly two points, then $\eta(\phi)\ge 0$.
\end{enumerate}
As a consequence of the above, if $K$ and $\tilde K$ are closed disks in the plane, then $\eta(\phi) \ge 0$.
\end{cil}

\noindent Lemma \ref{cil} can be found in \cite{MR1207210}*{Lemma 2.2}, with a clear and complete proof. There it is indicated that a version of the lemma appeared earlier in \cite{MR0051934}.\medskip

The moral of our second background lemma is that fixed-point indices ``add nicely'':

\begin{additivity}
\label{lem:indices add}
\label{additivity}
\label{ial}
Suppose that $K$ and $L$ are interiorwise disjoint closed Jordan domains which meet along a single positive-length Jordan arc $\partial K \cap \partial L$, similarly for $\tilde K$ and $\tilde L$. Then $K\cup L$ and $\tilde K \cup \tilde L$ are closed Jordan domains.

Let $\phi:\partial K\to \partial \tilde K$ and $\psi:\partial L\to \partial\tilde L$ be indexable homeomorphisms. Suppose that $\phi$ and $\psi$ agree on $\partial K \cap \partial L$. Let $\theta:\partial (K\cup L) \to \partial (\tilde K \cup \tilde L)$ be induced via restriction to $\phi$ or $\psi$ as necessary. Then $\theta$ is an indexable homeomorphism and $\eta(\theta) = \eta(\phi) + \eta(\psi)$.
\end{additivity}

\begin{figure}
\centering

\scalebox{.6} 
{
\begin{pspicture}(0,-2.49)(7.2690625,2.51)

\definecolor{color111b}{rgb}{0.9019607843137255,0.9019607843137255,0.9019607843137255}

\pscustom[linestyle=none,fillstyle=solid,fillcolor=color111b]
{
\newpath
\moveto(3.30,1.69)
\lineto(3.39,1.79)
\curveto(3.42,1.84)(3.52,1.905)(3.59,1.92)
\curveto(3.66,1.935)(3.815,1.995)(3.9,2.04)
\curveto(3.985,2.085)(4.17,2.205)(4.27,2.28)
\curveto(4.37,2.355)(4.595,2.45)(4.72,2.47)
\curveto(4.845,2.49)(5.155,2.45)(5.34,2.39)
\curveto(5.525,2.33)(5.83,2.045)(5.95,1.82)
\curveto(6.07,1.595)(6.25,1.16)(6.31,0.95)
\curveto(6.37,0.74)(6.52,0.365)(6.61,0.2)
\curveto(6.7,0.035)(6.815,-0.34)(6.84,-0.55)
\curveto(6.865,-0.76)(6.765,-1.19)(6.64,-1.41)
\curveto(6.515,-1.63)(6.165,-1.925)(5.94,-2.0)
\curveto(5.715,-2.075)(5.345,-2.185)(5.2,-2.22)
\curveto(5.055,-2.255)(4.72,-2.28)(4.53,-2.27)
\curveto(4.34,-2.26)(4.065,-2.145)(3.98,-2.04)
\curveto(3.895,-1.935)(3.79,-1.74)(3.77,-1.65)
\curveto(3.75,-1.56)(3.705,-1.44)(3.68,-1.41)
\curveto(3.655,-1.38)(3.615,-1.33)(3.6,-1.31)
\curveto(3.585,-1.29)(3.57,-1.265)(3.57,-1.25)
\lineto(1,0)
\lineto(3.30,1.69)
}

\pscustom[linestyle=none,fillstyle=solid,fillcolor=color111b]
{
\newpath
\moveto(3.27,1.83)
\lineto(3.21,1.93)
\curveto(3.18,1.98)(3.055,2.09)(2.96,2.15)
\curveto(2.865,2.21)(2.635,2.305)(2.5,2.34)
\curveto(2.365,2.375)(2.07,2.385)(1.91,2.36)
\curveto(1.75,2.335)(1.455,2.235)(1.32,2.16)
\curveto(1.185,2.085)(0.93,1.9)(0.81,1.79)
\curveto(0.69,1.68)(0.445,1.415)(0.32,1.26)
\curveto(0.195,1.105)(0.035,0.725)(0.0,0.5)
\curveto(-0.035,0.275)(0.01,-0.235)(0.09,-0.52)
\curveto(0.17,-0.805)(0.38,-1.23)(0.51,-1.37)
\curveto(0.64,-1.51)(0.89,-1.785)(1.01,-1.92)
\curveto(1.13,-2.055)(1.4,-2.26)(1.55,-2.33)
\curveto(1.7,-2.4)(2.08,-2.47)(2.31,-2.47)
\curveto(2.54,-2.47)(2.89,-2.395)(3.01,-2.32)
\curveto(3.13,-2.245)(3.31,-2.045)(3.37,-1.92)
\curveto(3.43,-1.795)(3.515,-1.555)(3.54,-1.44)
\curveto(3.565,-1.325)(3.555,-1.085)(3.52,-0.96)
\curveto(3.485,-0.835)(3.43,-0.64)(3.41,-0.57)
\curveto(3.39,-0.5)(3.36,-0.315)(3.35,-0.2)
\curveto(3.34,-0.085)(3.335,0.135)(3.34,0.24)
\curveto(3.345,0.345)(3.35,0.555)(3.35,0.66)
\curveto(3.35,0.765)(3.34,0.975)(3.33,1.08)
\curveto(3.32,1.185)(3.305,1.335)(3.3,1.38)
\curveto(3.295,1.425)(3.295,1.505)(3.3,1.54)
\curveto(3.305,1.575)(3.31,1.635)(3.31,1.66)
\curveto(3.31,1.685)(3.3,1.73)(3.29,1.75)
\curveto(3.28,1.77)(3.265,1.795)(3.27,1.83)
}

\pscustom[linewidth=0.02]
{
\newpath
\moveto(3.27,1.83)
\lineto(3.21,1.93)
\curveto(3.18,1.98)(3.055,2.09)(2.96,2.15)
\curveto(2.865,2.21)(2.635,2.305)(2.5,2.34)
\curveto(2.365,2.375)(2.07,2.385)(1.91,2.36)
\curveto(1.75,2.335)(1.455,2.235)(1.32,2.16)
\curveto(1.185,2.085)(0.93,1.9)(0.81,1.79)
\curveto(0.69,1.68)(0.445,1.415)(0.32,1.26)
\curveto(0.195,1.105)(0.035,0.725)(0.0,0.5)
\curveto(-0.035,0.275)(0.01,-0.235)(0.09,-0.52)
\curveto(0.17,-0.805)(0.38,-1.23)(0.51,-1.37)
\curveto(0.64,-1.51)(0.89,-1.785)(1.01,-1.92)
\curveto(1.13,-2.055)(1.4,-2.26)(1.55,-2.33)
\curveto(1.7,-2.4)(2.08,-2.47)(2.31,-2.47)
\curveto(2.54,-2.47)(2.89,-2.395)(3.01,-2.32)
\curveto(3.13,-2.245)(3.31,-2.045)(3.37,-1.92)
\curveto(3.43,-1.795)(3.515,-1.555)(3.54,-1.44)
\curveto(3.565,-1.325)(3.555,-1.085)(3.52,-0.96)
\curveto(3.485,-0.835)(3.43,-0.64)(3.41,-0.57)
\curveto(3.39,-0.5)(3.36,-0.315)(3.35,-0.2)
\curveto(3.34,-0.085)(3.335,0.135)(3.34,0.24)
\curveto(3.345,0.345)(3.35,0.555)(3.35,0.66)
\curveto(3.35,0.765)(3.34,0.975)(3.33,1.08)
\curveto(3.32,1.185)(3.305,1.335)(3.3,1.38)
\curveto(3.295,1.425)(3.295,1.505)(3.3,1.54)
\curveto(3.305,1.575)(3.31,1.635)(3.31,1.66)
\curveto(3.31,1.685)(3.3,1.73)(3.29,1.75)
\curveto(3.28,1.77)(3.265,1.795)(3.27,1.83)
}

\psline[linewidth=0.05,arrows=<-](3.69,-0.5)(3.69,0.4)

\psline[linewidth=0.05,arrows=->](3.09,-0.5)(3.09,0.4)

\psdots[dotsize=0.12](3.33,1.67)
\psdots[dotsize=0.12](3.57,-1.29)
\usefont{T1}{ptm}{m}{n}
\rput(3.7545313,1.6){$v$}
\usefont{T1}{ptm}{m}{n}
\rput(3.954531,-1.18){$u$}

\psline[linewidth=0.1,linestyle=none,arrows=->](-0.035,0.275)(0.09,-0.52)

\pscustom[linewidth=0.02]
{
\newpath
\moveto(3.30,1.69)
\lineto(3.39,1.79)
\curveto(3.42,1.84)(3.52,1.905)(3.59,1.92)
\curveto(3.66,1.935)(3.815,1.995)(3.9,2.04)
\curveto(3.985,2.085)(4.17,2.205)(4.27,2.28)
\curveto(4.37,2.355)(4.595,2.45)(4.72,2.47)
\curveto(4.845,2.49)(5.155,2.45)(5.34,2.39)
\curveto(5.525,2.33)(5.83,2.045)(5.95,1.82)
\curveto(6.07,1.595)(6.25,1.16)(6.31,0.95)
\curveto(6.37,0.74)(6.52,0.365)(6.61,0.2)
\curveto(6.7,0.035)(6.815,-0.34)(6.84,-0.55)
\curveto(6.865,-0.76)(6.765,-1.19)(6.64,-1.41)
\curveto(6.515,-1.63)(6.165,-1.925)(5.94,-2.0)
\curveto(5.715,-2.075)(5.345,-2.185)(5.2,-2.22)
\curveto(5.055,-2.255)(4.72,-2.28)(4.53,-2.27)
\curveto(4.34,-2.26)(4.065,-2.145)(3.98,-2.04)
\curveto(3.895,-1.935)(3.79,-1.74)(3.77,-1.65)
\curveto(3.75,-1.56)(3.705,-1.44)(3.68,-1.41)
\curveto(3.655,-1.38)(3.615,-1.33)(3.6,-1.31)
\curveto(3.585,-1.29)(3.57,-1.265)(3.57,-1.25)
}

\psline[linewidth=0.1,linestyle=none,arrows=<-](6.7,0.035)(6.84,-0.55)

\usefont{T1}{ptm}{m}{n}
\rput(0.42453125,2.14){$K$}
\usefont{T1}{ptm}{m}{n}
\rput(6.9045315,1.0){$L$}
\end{pspicture} 
}

\caption[An illustration of fixed-point index additivity]
{
\label{fig:lem indices add 1}
{\bf An illustration of fixed-point index additivity.}
}

\end{figure}
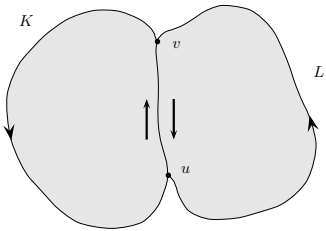

\begin{proof}
\label{ex:indices add}
The situation is as depicted in Figure \ref{fig:lem indices add 1}. We may consider $\eta(\phi)$ to be $1/2\pi$ times the change in argument of the vector $\phi(z) - z$, as $z$ traverses $\partial K$ once in the positive direction. Then as $z$ varies positively in $\partial K$ and in $\partial L$ the contributions to the sum $\eta(\phi) + \eta(\psi)$ along $\partial K \cap \partial L$ cancel.
\end{proof}\medskip

Next, we wish to make precise our notion of \emph{transverse position}:

\begin{definition}
\label{transverse position}
Two Jordan curves $\gamma$ and $\tilde\gamma$ are in \emph{transverse position} if for any point $z\in \gamma \cap \tilde\gamma$ where they meet, they \emph{cross transversely}, which here means that there is an open neighborhood $U\subset \bbC$ of $z$ and a homeomorphism $\phi : U \to \bbD$ between $U$ and the open unit disk $\bbD$, so that $\phi(\gamma \cap U) = \bbR \cap \bbD$ and $\phi(\tilde\gamma \cap U) = i \bbR \cap \bbD$. Two (open or closed) Jordan domains are said to be in \emph{transverse position} if their boundary Jordan curves are in transverse position.
\end{definition}

\noindent Note that if two Jordan curves are in transverse position, then they meet finitely many times, by a compactness argument.\medskip

Finally, it will be helpful to have available to us the terminology of the following definition:

\begin{definition}
Suppose that $X_1,\ldots,X_n$ and $ X'_1,\ldots, X'_n$ are all subsets of $\bbC$. Then we say that the collections $\{X_1,\ldots,X_n\}$ and $\{X'_1,\ldots, X'_n\}$ are in the same \emph{topological configuration} if there is an orientation-preserving homeomorphism $\varphi:\bbC \to \bbC$ so that $\varphi(X_i) =  X'_i$ for all $1\le i\le n$. In practice the collections of objects under consideration will not be labeled $X_i$ and $ X'_i$, but there will be some natural bijection between them. Then our requirement is that $\varphi$ respects this natural bijection. We say that certain conditions on some objects \emph{uniquely determine} their topological configuration if any two collections of objects satisfying the given conditions are in the same topological configuration.
\end{definition}

\noindent For example, considering a point $z\in \bbC$ and an open Jordan domain $\Omega$, the condition that $z\in \Omega$ uniquely determines the topological configuration of $\{z, \Omega\}$, but the condition that $z\not \in \Omega$ does not uniquely determine the topological configuration of $\{z, \Omega\}$. (We may have $z\in \partial \Omega$, or $z\in \bbC \setminus (\Omega \cup \partial \Omega)$, and these situations are topologically distinct.)

The following lemma says that when working with fixed-point index, we need to consider our Jordan domains only ``up to topological configuration.''

\begin{lemma}
\label{prop:index invariance}
Suppose $K$ and $\tilde K$ are closed Jordan domains. Let $f:\partial K \to \partial \tilde K$ be an indexable homeomorphism. Suppose that $K'$ and $\tilde K'$ are also closed Jordan domains, so that $\{K,\tilde K\}$ and $\{K',\tilde K'\}$ are in the same topological configuration, via the homeomorphism $\varphi: \bbC\to \bbC$. Let $f':\partial K'\to \partial \tilde K'$ be induced in the natural way, explicitly as $f' = \varphi|_{\partial \tilde K} \circ f \circ \varphi\inv|_{\partial K'}$. Then $f'$ is indexable with respect to the usual orientation on $\partial K'$ and $\partial \tilde K'$, and $\eta(f) = \eta(f')$.
\end{lemma}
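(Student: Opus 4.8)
The plan is to reduce the index identity to the homotopy invariance of winding number, after connecting $\varphi$ to the identity through homeomorphisms of the plane; indexability of $f'$ will come for free. First I would record that $f'$ is indexable, which needs no isotopy. It is orientation-preserving, being a composition of orientation-preserving homeomorphisms: here one uses that $\varphi$ and $\varphi\inv$ are orientation-preserving, and that an orientation-preserving self-homeomorphism of $\bbC$ carries the positively oriented boundary of a closed Jordan domain onto the positively oriented boundary of its (again closed Jordan domain) image. And $f'$ is fixed-point-free: if $f'(w)=w$, then putting $z:=\varphi\inv(w)\in\partial K$ gives $\varphi(f(z))=\varphi(z)$, hence $f(z)=z$, contradicting that $f$ is indexable.

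For the equality $\eta(f)=\eta(f')$ I would invoke the classical fact that the group of orientation-preserving homeomorphisms of $\bbC$ is path-connected --- more precisely, that there is a continuous map $H:[0,1]\times\bbC\to\bbC$ with each $\varphi_t:=H(t,\cdot)$ an orientation-preserving homeomorphism, $\varphi_0=\mathrm{id}_{\bbC}$, and $\varphi_1=\varphi$. (I expect to obtain this by extending $\varphi$ to a homeomorphism of $\bbS^2$ fixing $\infty$, which is legitimate since a homeomorphism of $\bbC$ is proper, and then combining the Schoenflies theorem with the Alexander trick; alternatively one may simply cite it.) Granting $H$, define $\gamma_t(z)=\varphi_t(f(z))-\varphi_t(z)$ for $z\in\partial K$. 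This is jointly continuous in $(t,z)$, and it is nowhere zero: $\gamma_t(z)=0$ would force $\varphi_t(f(z))=\varphi_t(z)$, hence $f(z)=z$ by injectivity of $\varphi_t$, which is impossible. Thus $\{\gamma_t\}_{t\in[0,1]}$ is a free homotopy of loops in $\bbC\setminus\{0\}$, so all the loops $\gamma_t$ have one and the same winding number about the origin.

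It then remains to match the two endpoints of this homotopy to $\eta(f)$ and $\eta(f')$. At $t=0$ we have $\gamma_0(z)=f(z)-z$, whose winding number as $z$ traverses $\partial K$ positively is $\eta(f)$ by definition. At $t=1$, since $\varphi|_{\partial K}:\partial K\to\partial K'$ is an orientation-preserving homeomorphism, $z$ traversing $\partial K$ positively corresponds to $w:=\varphi(z)$ traversing $\partial K'$ positively; and $f'(w)-w=\varphi(f(\varphi\inv(w)))-w=\varphi(f(z))-\varphi(z)=\gamma_1(z)$. Hence the winding number of $\gamma_1$ equals that of $w\mapsto f'(w)-w$ over positively traversed $\partial K'$, namely $\eta(f')$. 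Therefore $\eta(f)=\eta(f')$, and the lemma follows.

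The one genuinely nontrivial ingredient is the existence of the isotopy $H$ from $\mathrm{id}_{\bbC}$ to $\varphi$; everything else is bookkeeping with orientations together with the elementary homotopy invariance of winding number. If a self-contained treatment is wanted I would spell out the reduction via $\bbS^2$, the Schoenflies theorem, and the Alexander trick; otherwise I would cite path-connectedness of $\mathrm{Homeo}^+(\bbC)$ directly.
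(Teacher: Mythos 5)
Your proposal is correct and follows essentially the same route as the paper's proof: connect $\varphi$ to the identity through homeomorphisms of $\bbC$ (the paper cites this fact from Farb--Margalit), track the nowhere-vanishing family of loops $\varphi_t(f(z))-\varphi_t(z)$, and conclude by homotopy invariance of the winding number, with the fixed-point-free condition on $f$ ruling out a zero. Your explicit verification that $f'$ is indexable and your remark about matching orientations at the $t=1$ endpoint are details the paper leaves implicit, but the argument is the same.
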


\begin{proof}
The following is well-known. For a reference, see Chapters 1 and 2 of \cite{MR2850125}.
\begin{fact}
Every orientation-preserving homeomorphism $\bbC\to \bbC$ is homotopic to the identity map via homeomorphisms.
\end{fact}
\noindent Thus let $H_t:\bbC\times [0,1]\to \bbC$ be such a homotopy from the identity to $\varphi$. Explicitly, for fixed $t$ we have that $H_t$ is an orientation-preserving homeomorphism $\bbC\to \bbC$, with $H_0$ equal to the identity on $\bbC$ and $H_1 = \varphi$.

Let $K_t = H_t(K)$ and $\tilde K_t = H_t(\tilde K)$. Then $K_t$ and $\tilde K_t$ are closed Jordan domains, because $H_t$ is a homeomorphism. Let $f_t:\partial K_t\to \partial \tilde K_t$ be induced in the natural way, explicitly as $H_t|_{\partial \tilde K} \circ f \circ H_t\inv|_{\partial K_t}$. Let $\gamma_t = \{f_t(z) - z\}_{z\in \partial K_t}$. Then tautologically $\eta(f)$ is the winding number of $\gamma_0$ around the origin, and $\eta(f')$ is the winding number of $\gamma_1$ around the origin.

Every $\gamma_t$ is a closed curve because $\partial K_t$ is a closed curve and $f_t$ is continuous. Once we establish that no $\gamma_t$ passes through the origin, Lemma \ref{prop:index invariance} will be proved because we have an induced homotopy from $\gamma_0$ to $\gamma_1$, and two curves homotopic in $\bbC\setminus \{0\}$ have the same winding number around the origin. Suppose for contradiction that $0\in \gamma_t$. Then there is a $z\in \partial K_t$ so that $f_t(z) = z$. Thus $H_t\circ f\circ H_t\inv(z)= z$, and so $f(H_t\inv (z)) = H_t\inv (z)$, contradicting the fixed-point-free condition on $f$.
\end{proof}

\section{The Incompatibility Theorem}
\label{incompat section}

In this section, we state and prove the Incompatibility Theorem of Oded Schramm, appearing in \cite{MR1076089}*{Theorem 3.1}. Before doing so, we need some preliminary definitions:\medskip

First, a \emph{topological rectangle} is a closed Jordan domain $R$ with four marked points on its boundary $\partial R$, which we naturally call its \emph{corners}. A \emph{side} of $R$ is a closed sub-arc of $\partial R$ having two corners of $R$ as its endpoints, and containing no other corner of $R$. We abuse notation slightly and use the same symbol, in this case $R$, to refer both to a topological rectangle and to its constituent closed Jordan domain. We define \emph{topological triangles}, their \emph{corners}, and their \emph{sides} analogously, and employ the same abuse of notation.

A \emph{packing} of a topological rectangle $R$ consists of a finite collection $\{K_1,\allowbreak \ldots,\allowbreak K_n\}$ of closed Jordan domains so that the following hold:

\begin{itemize}
\item Every $K_i$ is contained in $R$.
\item The $K_i$ are pairwise interiorwise disjoint, and any two of them meet at at most one point.
\item Each of the $K_i$ meets $\partial R$ at at most one point, and no $K_i$ meets a corner of $R$.
\item For every connected component $U$ of $R\setminus \cup_{i=1}^n K_i$, we have that the closure of $U$ is a topological triangle $T$ each of whose sides is contained in one of the $\partial K_i$, or in a side of $R$. Then every corner of $T$ is either a corner of $R$, or an intersection point of some $\partial K_i$ either with some other $\partial K_j$ or with $\partial R$.
\end{itemize}

\noindent See Figure \ref{clown1} for two examples of packings of topological rectangles. Let $S_a,S_b,S_c,S_d$ denote the sides of $R$. The \emph{contact graph} of the packing of $R$ by $K_1,\ldots,K_n$ is the graph having vertices $v_1,\ldots,v_n$ corresponding to the $K_1,\ldots,K_n$ and $v_a,v_b,v_c,v_d$ corresponding to the sides of $R$, both in the natural way, so that two distinct vertices share an edge if and only if the corresponding sets meet. Note that for example, the contact graph of a packing of a rectangle is always a triangulation of a square, that is, a triangulation of a topological closed disk having four boundary edges.



\begin{figure}
\centering
\scalebox{1} 
{
\begin{pspicture}(0,-2.52)(11.74,2.54)
\definecolor{color1589b}{rgb}{0.8901960784313725,0.8862745098039215,0.8862745098039215}
\psframe[linewidth=0.04,dimen=outer](5.54,1.22)(0.46,-1.36)
\psbezier[linewidth=0.02](1.7,-0.12)(1.4,-0.12)(0.45218492,0.28114378)(0.5,0.58)(0.5478151,0.8788562)(1.4001769,1.1611928)(1.8,1.18)(2.1998231,1.1988072)(3.0059357,0.9609888)(3.0,0.68)(2.9940643,0.3990112)(2.6,0.28)(2.5,0.18)(2.4,0.08)(2.0,-0.12)(1.7,-0.12)
\psbezier[linewidth=0.02](2.5,0.18)(2.7,0.28)(3.123653,0.2064788)(3.3,0.08)(3.476347,-0.0464788)(3.2,-0.42)(2.9,-0.42)(2.6,-0.42)(2.3,0.08)(2.5,0.18)
\psbezier[linewidth=0.02](3.3,0.08)(3.2,0.18)(3.0,0.38)(3.0,0.68)(3.0,0.98)(3.4026165,1.1522919)(4.1,1.18)(4.7973833,1.2077081)(5.5714183,0.5651983)(5.5,0.28)(5.4285817,-0.0051983013)(4.698314,0.03803818)(4.2,-0.02)(3.7016857,-0.07803818)(3.4,-0.02)(3.3,0.08)
\psbezier[linewidth=0.02](4.2,-0.02)(4.8,-0.02)(5.5,-0.02)(5.5,-0.52)(5.5,-1.02)(4.0,-1.32)(3.0,-1.32)(2.0,-1.32)(0.5,-1.22)(0.5,-0.52)(0.5,0.18)(0.9,-0.12)(1.7,-0.12)(2.5,-0.12)(1.9003581,-0.44675693)(2.9,-0.42)(3.899642,-0.39324304)(3.6,-0.02)(4.2,-0.02)
\psframe[linewidth=0.04,dimen=outer](10.62,2.4)(6.54,-2.48)
\psbezier[linewidth=0.02,fillcolor=color1589b](6.58,1.16)(6.6309495,1.3284905)(6.68,1.76)(6.88,1.96)(7.08,2.16)(7.38,2.36)(7.78,2.36)(8.18,2.36)(8.58,2.06)(8.58,1.66)(8.58,1.26)(8.375797,0.76556337)(8.18,0.66)(7.984203,0.5544366)(7.48,0.56)(7.28,0.56)(7.08,0.56)(6.5290504,0.9915095)(6.58,1.16)
\psbezier[linewidth=0.02,fillcolor=color1589b](8.88,0.86)(8.76,0.94)(8.380138,0.82594526)(8.18,0.66)(7.9798613,0.49405473)(8.18,0.36)(8.28,0.26)(8.38,0.16)(8.48,-0.14)(8.68,-0.14)(8.88,-0.14)(8.86,0.12)(8.98,0.26)(9.1,0.4)(9.1,0.36)(9.16,0.5)(9.22,0.64)(9.0,0.78)(8.88,0.86)
\psbezier[linewidth=0.02,fillcolor=color1589b](9.18,2.36)(8.98,2.36)(8.602183,1.9525076)(8.58,1.66)(8.5578165,1.3674924)(8.68,1.46)(8.78,1.26)(8.88,1.06)(8.754272,0.94004184)(8.88,0.86)(9.005728,0.7799582)(9.15612,0.70268345)(9.28,0.66)(9.403879,0.61731654)(9.34,0.54)(9.58,0.56)(9.82,0.58)(9.98,0.66)(10.18,0.76)(10.38,0.86)(10.48,0.96)(10.58,1.16)(10.68,1.36)(10.25945,1.8262779)(10.18,1.96)(10.100551,2.093722)(9.812722,2.1994033)(9.68,2.26)(9.547278,2.3205967)(9.38,2.36)(9.18,2.36)
\psbezier[linewidth=0.02,fillcolor=color1589b](6.58,-1.04)(6.574384,-0.74032813)(7.0397286,0.6195747)(7.28,0.56)(7.520272,0.5004253)(8.18,-0.14)(8.68,-0.14)(9.18,-0.14)(9.352913,0.5601822)(9.58,0.56)(9.807088,0.55981773)(10.600395,-0.640208)(10.58,-0.94)(10.559605,-1.239792)(9.999562,-1.8670098)(9.78,-2.04)(9.560438,-2.2129903)(8.98,-2.44)(8.38,-2.44)(7.78,-2.44)(7.24302,-2.2863748)(7.08,-2.04)(6.91698,-1.7936251)(6.5856156,-1.3396719)(6.58,-1.04)
\usefont{T1}{ptm}{m}{n}
\rput(5.06,-1.615){$R$}
\usefont{T1}{ptm}{m}{n}
\rput(10.98,-2.095){$\tilde R$}
\usefont{T1}{ptm}{m}{n}
\usefont{T1}{ptm}{m}{n}
\usefont{T1}{ptm}{m}{n}
\usefont{T1}{ptm}{m}{n}
\end{pspicture} 
}
\caption[Two topological rectangles packed with shapes]
{
\label{clown1}
{\bf Two topological rectangles packed with shapes.}
}
\end{figure}
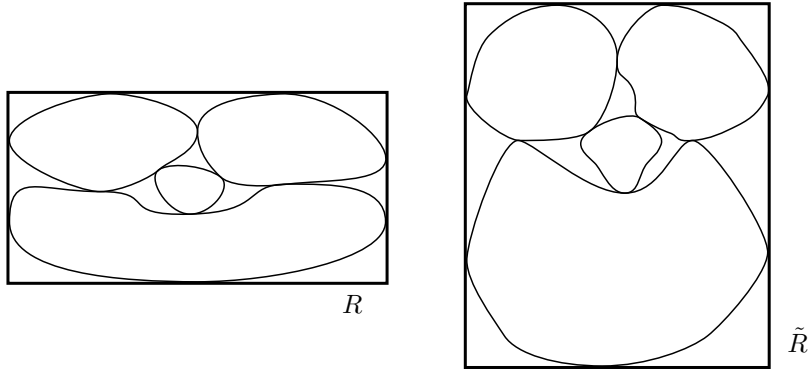

Next, suppose that we have packings of topological rectangles $R$ and $\tilde R$ by collections of closed Jordan domains $K_1,\ldots,K_n$ and $\tilde K_1,\ldots,\tilde K_n$, respectively. The two packings are said to be in \emph{transverse position} if:

\begin{itemize}
\item For every pair of integers $i$ and $\tilde i$, both between $1$ and $n$, with possibly $i=\tilde i$, we have that $K_i$ and $\tilde K_{\tilde i}$ are in transverse position as closed Jordan domains.
\item For every $1\le i\le n$, we have that $K_i$ and $\tilde R$ are in transverse position as closed Jordan domains, as are $\tilde K_i$ and $R$.
\item No intersection point of a pair of the sets $\partial K_1,\ldots,\partial K_n,\partial R$ lies on any $\partial \tilde K_i$ nor on $\partial \tilde R$. Similarly, no intersection point of a pair of the sets $\partial \tilde K_1,\ldots,\partial \tilde K_n,\partial \tilde R$ lies on any $\partial K_i$ nor on $\partial R$.
\end{itemize}

\noindent For two examples of packings in transverse position, see Figure \ref{clown2}.\medskip

Finally, two closed Jordan domains $K$ and $\tilde K$ which are in transverse position are said to \emph{cut} one another if at least one of the sets $K\setminus \tilde K$ and $\tilde K\setminus K$ is disconnected. Though it is not important for us, it holds for such $K$ and $\tilde K$ that $K\setminus \tilde K$ is connected if and only if $\tilde K \setminus K$ is.\medskip

We are now ready to state our version of the Incompatibility Theorem originally due to Schramm. The original appears in \cite{MR1076089}*{Theorem 3.1}, where it is used to give the first proof of the rigidity of circle packings filling the complex plane.

When processing our statement of the Incompatibility Theorem \ref{incompat}, it may be helpful to keep the following in mind: in Schramm's original formulation, our notion of shapes \emph{cutting one another} is referred to as \emph{incompatibility}. Then the Incompatibility Theorem \ref{incompat} may be remembered as, ``if two incompatible rectangles are packed in the same combinatorial way, then at least one pair of corresponding shapes of the two packings will be incompatible.''  The precise statement given in \cite{MR1076089} is somewhat different from the one we give here. Our statement communicates the main idea of the theorem, and suffices for our applications. Our proof uses fixed-point index, with the main new tool being Lemma \ref{no cut index}. The original proof by Schramm is via different methods, but is also elementary.

\begin{figure}
\centering
\scalebox{1} 
{
\begin{pspicture}(0,-1.7292187)(6.6690626,1.7292187)
\psframe[linewidth=0.04,dimen=middle](5.81,0.5757812)(0.81,-0.62421876)
\psframe[linewidth=0.04,linestyle=dashed,dash=0.16cm 0.16cm,dimen=middle](5.21,1.1757812)(1.41,-1.2242187)
\psdots[dotsize=0.12](1.41,1.1757812)
\psdots[dotsize=0.12](5.21,1.1757812)
\psdots[dotsize=0.12](5.81,0.5757812)
\psdots[dotsize=0.12](5.81,-0.62421876)
\psdots[dotsize=0.12](0.81,-0.62421876)
\psdots[dotsize=0.12](0.81,0.5757812)
\psdots[dotsize=0.12](1.41,-1.2242187)
\psdots[dotsize=0.12](5.21,-1.2242187)
\usefont{T1}{ppl}{m}{n}
\rput(6.1045313,-0.03421875){$S_a$}
\usefont{T1}{ppl}{m}{n}
\rput(2.8545313,0.30578125){$S_b$}
\usefont{T1}{ppl}{m}{n}
\rput(0.45453125,0.02578125){$S_c$}
\usefont{T1}{ppl}{m}{n}
\rput(3.7145312,-0.37421876){$S_d$}
\usefont{T1}{ppl}{m}{n}
\rput(4.8545313,0.00578125){$\tilde S_a$}
\usefont{T1}{ppl}{m}{n}
\rput(3.2845314,1.4657812){$\tilde S_b$}
\usefont{T1}{ppl}{m}{n}
\rput(1.7645313,-0.03421875){$\tilde S_c$}
\usefont{T1}{ppl}{m}{n}
\rput(3.2045312,-1.4942187){$\tilde S_d$}
\usefont{T1}{ppl}{m}{n}
\rput(6.054531,0.74578124){$R$}
\usefont{T1}{ppl}{m}{n}
\rput(1.2645313,1.5257813){$\tilde R$}
\end{pspicture} 
}
\caption
{
\label{topo conf incompat rect}
}
\end{figure}

\begin{incompat}
\label{incompat}
Let $R$ and $\tilde R$ be topological rectangles having sides $S_a,\allowbreak S_b,\allowbreak S_c,\allowbreak S_d$ and $\tilde S_a,\allowbreak \tilde S_b,\allowbreak \tilde S_c,\allowbreak \tilde S_d$ respectively, in the topological configuration depicted in Figure \ref{topo conf incompat rect}. Suppose that we are given packings of $R$ and $\tilde R$ by collections of closed Jordan domains $K_1,\ldots,K_n$ and $\tilde K_1,\ldots,\tilde K_n$ respectively, in transverse position, so that the packings are combinatorially equivalent in the following precise sense: denoting the contact graphs of the packings by $G$ and $\tilde G$ respectively, we insist that the following holds: letting $v_a,v_b,v_c,v_d,v_1,\ldots,v_n$ and $\tilde v_a,\tilde v_b, \tilde v_c, \tilde v_d, \tilde v_1, \ldots, \tilde v_n$ denote the vertex sets of $G$ and $\tilde G$ in the natural way, we have that $G$ and $\tilde G$ are isomorphic via the identification $v_i \mapsto \tilde v_i$ for $i=a,b,c,d,1,\ldots,n$. Then, there is an $1\le i\le n$ so that $K_i$ and $\tilde K_i$ cut each other.
\end{incompat}

\noindent For example, the packings shown in Figure \ref{clown1} share a contact graph. In Figure \ref{clown2} we have overlaid them in two different ways, in both cases ensuring that the hypotheses of Theorem \ref{incompat} are satisfied. We see that in each case, a pair of corresponding closed Jordan domains cut one another.\medskip

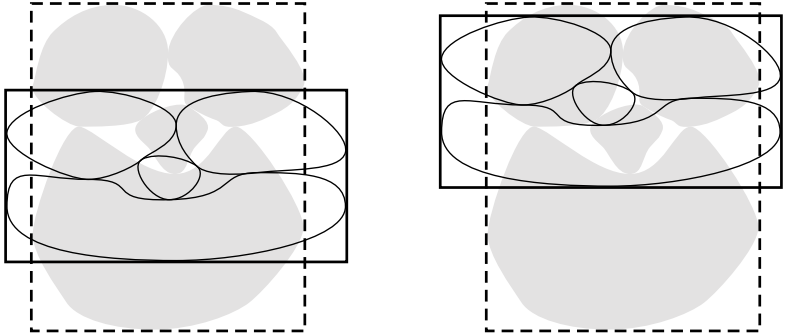
\begin{figure}
\centering
\subfloat
{
\scalebox{0.9} 
{
\begin{pspicture}(0,-2.47)(5.1414185,2.47)
\definecolor{color721b}{rgb}{0.8901960784313725,0.8862745098039215,0.8862745098039215}
\psbezier[linewidth=0.02,fillstyle=solid,fillcolor=color721b,linestyle=none](0.44,1.2)(0.49094954,1.3684905)(0.54,1.8)(0.74,2.0)(0.94,2.2)(1.24,2.4)(1.64,2.4)(2.04,2.4)(2.44,2.1)(2.44,1.7)(2.44,1.3)(2.2357972,0.80556333)(2.04,0.7)(1.8442029,0.59443665)(1.34,0.6)(1.14,0.6)(0.94,0.6)(0.38905045,1.0315095)(0.44,1.2)
\psbezier[linewidth=0.02,fillstyle=solid,fillcolor=color721b,linestyle=none](2.74,0.9)(2.62,0.98)(2.2401388,0.8659453)(2.04,0.7)(1.8398613,0.5340547)(2.04,0.4)(2.14,0.3)(2.24,0.2)(2.34,-0.1)(2.54,-0.1)(2.74,-0.1)(2.72,0.16)(2.84,0.3)(2.96,0.44)(2.96,0.4)(3.02,0.54)(3.08,0.68)(2.86,0.82)(2.74,0.9)
\psbezier[linewidth=0.02,fillstyle=solid,fillcolor=color721b,linestyle=none](3.04,2.4)(2.84,2.4)(2.4621832,1.9925076)(2.44,1.7)(2.4178166,1.4074924)(2.54,1.5)(2.64,1.3)(2.74,1.1)(2.6142726,0.9800418)(2.74,0.9)(2.8657274,0.8199582)(3.0161204,0.7426834)(3.14,0.7)(3.2638795,0.65731657)(3.2,0.58)(3.44,0.6)(3.68,0.62)(3.84,0.7)(4.04,0.8)(4.24,0.9)(4.34,1.0)(4.44,1.2)(4.54,1.4)(4.1194496,1.8662778)(4.04,2.0)(3.9605503,2.1337223)(3.6727219,2.2394032)(3.54,2.3)(3.407278,2.3605967)(3.24,2.4)(3.04,2.4)
\psbezier[linewidth=0.02,fillstyle=solid,fillcolor=color721b,linestyle=none](0.44,-1.0)(0.43438423,-0.7003281)(0.8997284,0.65957475)(1.14,0.6)(1.3802716,0.54042524)(2.04,-0.1)(2.54,-0.1)(3.04,-0.1)(3.2129126,0.60018224)(3.44,0.6)(3.6670876,0.59981775)(4.4603953,-0.600208)(4.44,-0.9)(4.419605,-1.199792)(3.8595622,-1.8270097)(3.64,-2.0)(3.4204378,-2.1729903)(2.84,-2.4)(2.24,-2.4)(1.64,-2.4)(1.10302,-2.2463748)(0.94,-2.0)(0.77698004,-1.7536252)(0.44561577,-1.2996719)(0.44,-1.0)
\psframe[linewidth=0.04,dimen=inner](5.06,1.12)(0.06,-1.38)
\psbezier[linewidth=0.02](1.26,-0.18)(0.96,-0.18)(0.012184902,0.2211438)(0.06,0.52)(0.107815094,0.8188562)(0.9601769,1.1011928)(1.36,1.12)(1.7598231,1.1388072)(2.5659356,0.9009888)(2.56,0.62)(2.5540643,0.3390112)(2.16,0.22)(2.06,0.12)(1.96,0.02)(1.56,-0.18)(1.26,-0.18)
\psbezier[linewidth=0.02](2.06,0.12)(2.26,0.22)(2.683653,0.1464788)(2.86,0.02)(3.036347,-0.1064788)(2.76,-0.48)(2.46,-0.48)(2.16,-0.48)(1.86,0.02)(2.06,0.12)
\psbezier[linewidth=0.02](2.86,0.02)(2.76,0.12)(2.56,0.32)(2.56,0.62)(2.56,0.92)(2.9626164,1.0922918)(3.66,1.12)(4.3573837,1.1477082)(5.1314187,0.5051983)(5.06,0.22)(4.9885817,-0.0651983)(4.258314,-0.02196182)(3.76,-0.08)(3.2616856,-0.13803819)(2.96,-0.08)(2.86,0.02)
\psbezier[linewidth=0.02](3.76,-0.08)(4.36,-0.08)(5.06,-0.08)(5.06,-0.58)(5.06,-1.08)(3.56,-1.38)(2.56,-1.38)(1.56,-1.38)(0.06,-1.28)(0.06,-0.58)(0.06,0.12)(0.46,-0.18)(1.26,-0.18)(2.06,-0.18)(1.460358,-0.50675696)(2.46,-0.48)(3.459642,-0.45324305)(3.16,-0.08)(3.76,-0.08)
\psframe[linewidth=0.04,dimen=inner,linestyle=dashed](4.44,2.4)(0.44,-2.4)
\end{pspicture} 
}
}
\qquad
\subfloat
{
\scalebox{0.9} 
{
\begin{pspicture}(0,-2.47)(5.1414185,2.47)
\definecolor{color721b}{rgb}{0.8901960784313725,0.8862745098039215,0.8862745098039215}
\psbezier[linewidth=0.02,fillstyle=solid,fillcolor=color721b,linestyle=none](0.74,1.2)(0.7909495,1.3684905)(0.84,1.8)(1.04,2.0)(1.24,2.2)(1.54,2.4)(1.94,2.4)(2.34,2.4)(2.74,2.1)(2.74,1.7)(2.74,1.3)(2.535797,0.80556333)(2.34,0.7)(2.144203,0.59443665)(1.64,0.6)(1.44,0.6)(1.24,0.6)(0.68905044,1.0315095)(0.74,1.2)
\psbezier[linewidth=0.02,fillstyle=solid,fillcolor=color721b,linestyle=none](3.04,0.9)(2.92,0.98)(2.5401387,0.8659453)(2.34,0.7)(2.1398613,0.5340547)(2.34,0.4)(2.44,0.3)(2.54,0.2)(2.64,-0.1)(2.84,-0.1)(3.04,-0.1)(3.02,0.16)(3.14,0.3)(3.26,0.44)(3.26,0.4)(3.32,0.54)(3.38,0.68)(3.16,0.82)(3.04,0.9)
\psbezier[linewidth=0.02,fillstyle=solid,fillcolor=color721b,linestyle=none](3.34,2.4)(3.14,2.4)(2.7621832,1.9925076)(2.74,1.7)(2.7178168,1.4074924)(2.84,1.5)(2.94,1.3)(3.04,1.1)(2.9142725,0.9800418)(3.04,0.9)(3.1657274,0.8199582)(3.3161204,0.7426834)(3.44,0.7)(3.5638795,0.65731657)(3.5,0.58)(3.74,0.6)(3.98,0.62)(4.14,0.7)(4.34,0.8)(4.54,0.9)(4.64,1.0)(4.74,1.2)(4.84,1.4)(4.41945,1.8662778)(4.34,2.0)(4.2605505,2.1337223)(3.9727218,2.2394032)(3.84,2.3)(3.7072783,2.3605967)(3.54,2.4)(3.34,2.4)
\psbezier[linewidth=0.02,fillstyle=solid,fillcolor=color721b,linestyle=none](0.74,-1.0)(0.73438424,-0.7003281)(1.1997284,0.65957475)(1.44,0.6)(1.6802716,0.54042524)(2.34,-0.1)(2.84,-0.1)(3.34,-0.1)(3.5129125,0.60018224)(3.74,0.6)(3.9670875,0.59981775)(4.7603955,-0.600208)(4.74,-0.9)(4.7196045,-1.199792)(4.159562,-1.8270097)(3.94,-2.0)(3.7204378,-2.1729903)(3.14,-2.4)(2.54,-2.4)(1.94,-2.4)(1.40302,-2.2463748)(1.24,-2.0)(1.07698,-1.7536252)(0.7456158,-1.2996719)(0.74,-1.0)
\psframe[linewidth=0.04,dimen=inner](5.06,2.22)(0.06,-0.28)
\psbezier[linewidth=0.02](1.26,0.92)(0.96,0.92)(0.012184902,1.3211437)(0.06,1.62)(0.107815094,1.9188563)(0.9601769,2.2011929)(1.36,2.22)(1.7598231,2.2388072)(2.5659356,2.0009887)(2.56,1.72)(2.5540643,1.4390112)(2.16,1.32)(2.06,1.22)(1.96,1.12)(1.56,0.92)(1.26,0.92)
\psbezier[linewidth=0.02](2.06,1.22)(2.26,1.32)(2.683653,1.2464788)(2.86,1.12)(3.036347,0.9935212)(2.76,0.62)(2.46,0.62)(2.16,0.62)(1.86,1.12)(2.06,1.22)
\psbezier[linewidth=0.02](2.86,1.12)(2.76,1.22)(2.56,1.42)(2.56,1.72)(2.56,2.02)(2.9626164,2.192292)(3.66,2.22)(4.3573837,2.247708)(5.1314187,1.6051983)(5.06,1.32)(4.9885817,1.0348017)(4.258314,1.0780382)(3.76,1.02)(3.2616856,0.9619618)(2.96,1.02)(2.86,1.12)
\psbezier[linewidth=0.02](3.76,1.02)(4.36,1.02)(5.06,1.02)(5.06,0.52)(5.06,0.02)(3.56,-0.28)(2.56,-0.28)(1.56,-0.28)(0.06,-0.18)(0.06,0.52)(0.06,1.22)(0.46,0.92)(1.26,0.92)(2.06,0.92)(1.460358,0.59324306)(2.46,0.62)(3.459642,0.64675695)(3.16,1.02)(3.76,1.02)
\psframe[linewidth=0.04,dimen=inner,linestyle=dashed](4.74,2.4)(0.74,-2.4)
\end{pspicture}
}
}
\caption[Shapes cutting each other]
{
\label{clown2}
{\bf Shapes cutting each other.}  We have drawn $R$ and $\tilde R$ on top of each other in two different ``incompatible'' ways, and in both cases some pair of corresponding shapes are ``incompatible.''
}
\end{figure}

The rest of this section consists of a proof of the Incompatibility Theorem \ref{incompat}. Our proof relies on the following simple lemma, which appeared in the introduction but is restated here for the convenience of the reader:

\renewcommand{\thelemmafree}{\ref{no cut index}}
\begin{lemmafree}
Let $K$ and $\tilde K$ be closed Jordan domains in transverse position, which do not cut each other, having boundaries oriented positively. Let $\phi : \partial K \to \partial \tilde K$ be an indexable homeomorphism. Then $\eta(\phi) \ge 0$.
\end{lemmafree}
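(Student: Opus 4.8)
The plan is to induct on the number of intersection points of $\partial K$ and $\partial \tilde K$, which by transverse position is a finite even number $2m$. The base cases are handled by the Circle Index Lemma~\ref{cil}: if $m=0$ the boundary curves are disjoint, so connectedness of $K$ and of $\tilde K$ forces $K\subseteq\tilde K$, or $\tilde K\subseteq K$, or $K$ and $\tilde K$ disjoint, giving $\eta(\phi)=1$ or $\eta(\phi)=0$ by parts (2) and (3); and if $m=1$, part (4) gives $\eta(\phi)\ge 0$ directly.

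For the inductive step I assume $m\ge 2$ and that the lemma holds whenever the boundaries meet in fewer than $2m$ points. First I would record the local picture at a crossing: there $\partial K$ and $\partial\tilde K$ cut a small disk into four quadrants, exactly one lying in each of $\mathrm{int}\,K\cap\mathrm{int}\,\tilde K$, $\mathrm{int}\,K\setminus\tilde K$, $\mathrm{int}\,\tilde K\setminus K$, and $\bbC\setminus(K\cup\tilde K)$. Since $K$ and $\tilde K$ do not cut each other, $U:=\mathrm{int}\,K\setminus\tilde K$ is connected, and since (by the local picture) $U$ never pinches at a crossing and has no hole enclosing part of $\partial K$, the set $U$ is simply connected with $\partial U$ a single Jordan curve; thus $L_1:=\overline{U}$ is a closed Jordan domain, and $\partial L_1$ is precisely the union of all arcs of $\partial K$ lying outside $\tilde K$ together with all arcs of $\partial\tilde K$ lying inside $K$, appearing alternately. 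The symmetric statement holds for $L_2:=\overline{\mathrm{int}\,\tilde K\setminus K}$.

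Next I would locate a ``simple finger'' to retract. Among the $m$ arcs of $\partial\tilde K$ contained in $\mathrm{int}\,K$, and the two arcs of $\partial K$ complementary to each, choose $\gamma_0$ with complementary arc $a_0$, bounding a closed region $D_0\subseteq K$, so as to minimize the number of crossing points interior to $a_0$. A minimality argument — a crossing interior to $a_0$ is the endpoint of an arc of $\partial\tilde K$ running into $D_0$, which must re-emerge on $a_0$ and cuts off a strictly smaller such region — shows this minimum is $0$, so $a_0$ is a single arc of $\partial K$. Since $a_0$ and $\gamma_0$ carry no interior crossings and neither curve can enter $\mathrm{int}\,D_0$ without crossing $\partial D_0=a_0\cup\gamma_0$, the region type of $\mathrm{int}\,D_0$ is constant; reading it off near a corner of $D_0$ shows that if $a_0\subseteq\tilde K$ then $D_0\subseteq\mathrm{int}\,K\cap\mathrm{int}\,\tilde K$, so $D_0$ is a component of $K\cap\tilde K$ bounded by one arc of each curve. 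The alternative $a_0\subseteq\bbC\setminus\tilde K$ is impossible: then both arcs of $\partial D_0$ lie on $\partial L_1$, so $\partial D_0=\partial L_1$ (a Jordan curve contains no proper Jordan subcurve), whence $D_0=L_1$ and $\partial L_1$ consists of just one arc of each kind, forcing $m=1$. This is the step that genuinely uses the non-cutting hypothesis, and I expect it to be the main obstacle: one must rule out the ``innermost'' feature being a large lobe rather than a thin finger, and it is exactly the failure of this that permits negative index when the domains do cut each other.

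Finally I would retract $D_0$: let $\tilde K'$ be $\tilde K$ with $D_0$ removed, slightly fattened across $a_0$ so that the new boundary arc lies just outside $K$. Then $\tilde K'$ is a closed Jordan domain, still in transverse position with $K$, still not cutting $K$ (both $\overline{K\setminus\tilde K'}=L_1\cup D_0$ and $\overline{\tilde K'\setminus K}$ remain connected), and $\partial\tilde K'$ meets $\partial K$ in only $2m-2$ points. There is a homeomorphism $h:\partial\tilde K\to\partial\tilde K'$, supported near $D_0$, moving every point by less than $\inf_{z\in\partial K}|\phi(z)-z|$, which is positive by compactness and fixed-point-freeness; then $\phi':=h\circ\phi$ is indexable, and since $|\phi'(z)-\phi(z)|<|\phi(z)-z|$ for every $z$, the straight-line homotopy $t\mapsto\{(1-t)(\phi(z)-z)+t(\phi'(z)-z)\}_{z\in\partial K}$ avoids the origin, so $\eta(\phi)=\eta(\phi')$. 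By the inductive hypothesis $\eta(\phi')\ge 0$, hence $\eta(\phi)\ge 0$, completing the induction.
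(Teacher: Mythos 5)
Your outline is fine up through the identification of the innermost finger: the base cases via the Circle Index Lemma \ref{cil}, the innermost-arc minimality argument, and the use of the non-cutting hypothesis to rule out the ``lobe'' alternative (so that $D_0\subseteq K\cap\tilde K$) are all essentially correct, if in need of some plane-topology care. The genuine gap is in the final step, where the induction is actually invoked. You assert the existence of a homeomorphism $h:\partial\tilde K\to\partial\tilde K'$ moving every point by less than $\delta:=\inf_{z\in\partial K}|\phi(z)-z|$. No such $h$ need exist: any homeomorphism onto $\partial\tilde K'$ must carry the points of $\gamma_0$ to points of $\partial\tilde K'$, and a point in the middle of $\gamma_0$ can lie at a definite distance --- comparable to the size of the finger $D_0$ --- from all of $\partial\tilde K'$ (the rerouted arc hugs $a_0$ from outside $K$, and the rest of $\partial\tilde K'$ is the old boundary away from the finger), whereas $\delta$ is dictated by the given $\phi$ and can be arbitrarily small compared with the size of $D_0$ (nothing prevents $\phi$ from sending some point of $\partial K$ to a nearby point of $\partial\tilde K$, say near a crossing). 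So the displacement bound fails, and with it the straight-line homotopy. Moreover the problem is not cosmetic: the asserted equality $\eta(\phi)=\eta(h\circ\phi)$ is false in general. If $\phi$ maps some point of $a_0$ to a point of $\gamma_0$ --- i.e., in the language of Sections \ref{chap:torus} and \ref{sec:3p}, the graph of $\phi$ passes through the adjacency box of the two crossings being deleted --- then Lemma \ref{prop:computing index from torus} shows that for the natural transfer $\phi'$ one has $\eta(\phi)=\eta(\phi')+1$, so no $h$ with your stated properties can exist there, since it would force equality.

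The discrepancy does always go the right way: because $D_0\subseteq K\cap\tilde K$, the crossing at which $\partial K$ enters $\tilde K$ comes first along $a_0$ while the crossing at which $\partial\tilde K$ enters $K$ comes first along $\gamma_0$, so in a torus parametrization the deleted pair occupy the upper-left and lower-right corners of their adjacency box; a strictly increasing graph can never place the former strictly below itself and the latter strictly above, whence $\eta(\phi)-\eta(\phi')\in\{0,1\}$ and your induction can in principle be completed. But establishing this requires the bookkeeping of Lemma \ref{prop:computing index from torus} (choice of base point, the wrap-around cases, choosing $u$ with $u\notin a_0$ and $\phi(u)\notin\gamma_0$, and checking the two winding-number terms are unchanged) --- in effect a fixed-homeomorphism version of the case analysis in Section \ref{sec:3p} --- not a small-perturbation argument. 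For contrast, the paper sidesteps induction entirely: Lemma \ref{uniqa} shows the non-cutting hypothesis determines the configuration of $K,\tilde K$ up to the number of crossings, Lemma \ref{prop:index invariance} then reduces to the single ``comb'' picture of Figure \ref{no cut index figure}, and there one checks directly that the curve $\{\phi(z)-z\}$ can cross the positive real axis only in the counterclockwise direction.
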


\noindent We now give the proof of Theorem \ref{incompat} assuming Lemma \ref{no cut index}:

First, note that there is a natural bijection between the $U_f$ and the $\tilde U_f$, where we write $\{U_f\}_{f\in F}$ to denote the connected components of $R \setminus \partial R \cup \bigcup_{i=1}^n K_i$, similarly $\{\tilde U_f\}_{f\in F}$. Moreover, for fixed $f$, we have that $U_f$ and $\tilde U_f$ are topological triangles, and that the corners of $U_f$ correspond in a natural way to those of its partner $\tilde U_f$. (To see this, one may consider the graphs $G$ and $\tilde G$. Each is the 1-skeleton of a triangulation of a topological closed disk, and because the graphs are isomorphic, we get that the triangulations are combinatorially equivalent. Then the $U_f$ are in natural bijection with the faces $F$ of this combinatorial triangulation, as are the $\tilde U_f$.)

Given such a pair $U_f$ and $\tilde U_f$, we have that $U_f$ and $\tilde U_f$ are in transverse position as closed Jordan domains, because the packings we began with are in transverse position. For every such pair $U_f$ and $\tilde U_f$, orient $\partial U_f$ and $\tilde\partial U_f$ positively, and let $\phi_f : \partial U_f \to \partial \tilde U_f$ be an indexable homeomorphism identifying corresponding corners, satisfying $\eta(\phi_f) \ge 0$. We may do so by the Three Point Prescription Lemma \ref{tppl}.

Next, for every $1\le i\le n$, orienting $\partial K_i$ and $\partial \tilde K_i$ positively, we obtain an indexable homeomorphism $\phi_i : \partial K_i \to \partial \tilde K_i$ by restriction to the $\phi_f$ as necessary. Also, via the same procedure, orienting $\partial R$ and $\partial \tilde R$ positively, we obtain an indexable homeomorphism $\phi_R : \partial R \to \partial \tilde R$. Then, by the Index Additivity Lemma \ref{ial}, we have:

\[
\eta(\phi_R) = \sum_{i=1}^n \eta(\phi_i) + \sum_{f\in F} \eta(\phi_f)
\]

\noindent Now, as we saw in Figure \ref{fig:ex neg winding forced}, we have that $\eta(\phi_R) = -1$. On the other hand, every $\eta(\phi_f)$ is non-negative by construction, and the $\eta(\phi_i)$ are non-negative by Lemma \ref{no cut index}, which gives us a contradiction.\medskip

To complete the proof of the Incompatibility Theorem \ref{incompat}, we require the proofs of several lemmas. The proof of the Three Point Prescription Lemma \ref{tppl} is given in Section \ref{sec:3p} using torus parametrization. Our proof of Lemma \ref{no cut index} is inspired\footnote{Thanks to Mario Bonk for suggesting this line of proof, greatly simplifying the required arguments.} by an argument given by Schramm in \cite{MR1207210}*{Proof of Lemma 2.2} in support of the Circle Index Lemma \ref{cil}. We first need to prove a topological lemma on Jordan domains which do not cut each other:

\begin{lemma}
\label{uniqa}
Suppose $K$ and $\tilde K$ are closed Jordan domains in transverse position which do not cut each other, whose boundaries meet at least twice. Then the topological configuration of $\{K,\tilde K\}$ is determined by how many times $\partial K$ and $\partial \tilde K$ meet.
\end{lemma}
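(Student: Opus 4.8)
The plan is to show that the hypotheses force $\partial K$ and $\partial\tilde K$ to meet in exactly two points, and then that the two-intersection transverse configuration is unique up to an orientation-preserving homeomorphism of $\bbC$.

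\emph{Step 1: there are exactly two intersection points.} View $\Gamma=\partial K\cup\partial\tilde K$ as a finite plane graph: by transversality and compactness the two curves meet in finitely many points, and in an even number $2m$ of them, since $\partial\tilde K$ crosses the Jordan curve $\partial K$ transversely at each meeting point and so must return to the side it started from. Take these points as the vertices of $\Gamma$, each of degree $4$, and the $2m$ subarcs of $\partial K$ together with the $2m$ subarcs of $\partial\tilde K$ as its edges. As $\Gamma$ is connected, Euler's formula gives $2m-4m+F=2$, so $\Gamma$ has $F=2m+2$ complementary faces, one of them unbounded; the $2m+1$ bounded faces are exactly the connected components of $\operatorname{int}K\cap\operatorname{int}\tilde K$, of $\operatorname{int}K\setminus\tilde K$, and of $\operatorname{int}\tilde K\setminus K$. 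A short argument shows that each such component has part of its frontier on $\partial K$: for instance, a component $C$ of $\operatorname{int}K\setminus\tilde K=\operatorname{int}K\cap(\bbC\setminus\tilde K)$ whose frontier is contained in $\partial\tilde K$ would be relatively clopen in the connected set $\bbC\setminus\tilde K$, hence equal to it, which is impossible since $C$ is bounded; the components of $\operatorname{int}\tilde K\setminus K$ and of $\operatorname{int}K\cap\operatorname{int}\tilde K$ are handled identically, except that for the last one the forced conclusion $\tilde K\subseteq K$ is ruled out by the presence of a transverse meeting point. Along its $2m$ subarcs, $\partial K$ lies alternately inside and outside $\tilde K$, so $m$ of these subarcs lie inside $\tilde K$ and $m$ outside; since each subarc lying inside $\tilde K$ borders exactly one component of $\operatorname{int}K\cap\operatorname{int}\tilde K$ and every such component is bordered by at least one of them, there are at most $m$ such components, and likewise at most $m$ components each of $\operatorname{int}K\setminus\tilde K$ and of $\operatorname{int}\tilde K\setminus K$. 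But "$K$ and $\tilde K$ do not cut each other" says exactly that the last two counts equal $1$; therefore $\#(\operatorname{int}K\cap\operatorname{int}\tilde K)=(2m+1)-1-1=2m-1\le m$, forcing $m=1$.

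\emph{Step 2: uniqueness for $m=1$.} With two vertices $p_1,p_2$, the curve $\partial K$ is a union of an arc $\alpha_1$ whose interior lies in $\operatorname{int}\tilde K$ and an arc $\alpha_2$ whose interior lies outside $\tilde K$ — exactly one of each, since $\partial K$ crosses $\partial\tilde K$ at both $p_1$ and $p_2$ — and similarly $\partial\tilde K=\beta_1\cup\beta_2$; the four faces $\operatorname{int}K\cap\operatorname{int}\tilde K$, $\operatorname{int}K\setminus\tilde K$, $\operatorname{int}\tilde K\setminus K$, and the unbounded one are nonempty and bounded respectively by $\{\alpha_1,\beta_1\}$, $\{\alpha_2,\beta_1\}$, $\{\alpha_1,\beta_2\}$, $\{\alpha_2,\beta_2\}$. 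This determines, compatibly with the orientation of the plane, the cyclic order in which the four edges leave each vertex, so any two admissible pairs have the same combinatorial arrangement. To pass from this to sameness of topological configuration I would normalize by the Schoenflies theorem: send $K$ to the closed unit disk by an orientation-preserving homeomorphism of $\bbC$, rotate so that $p_1=-1$, $p_2=1$, and $\alpha_1$ is the lower half of $\partial\bbD$; then the crosscut $\beta_1$ of the open disk from $-1$ to $1$ can be straightened to the diameter by an orientation-preserving homeomorphism supported on $\overline{\bbD}$, and $\beta_2$ straightened in the exterior in the same way, yielding a single standard configuration. Since every admissible pair is carried to this configuration by an orientation-preserving homeomorphism of $\bbC$, all admissible pairs lie in the same topological configuration, which in particular depends only on the number of intersection points.

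The main obstacle is the plane-topology bookkeeping rather than any single idea: in Step 1 one must make the frontier-of-a-face argument fully rigorous, in particular that the alternative containments are genuinely excluded by transversality and that "at most $m$ components" is correct; and in Step 2 one must check that all the normalizing homeomorphisms can be taken orientation-preserving, since the combinatorial data alone fixes the configuration only up to a reflection.
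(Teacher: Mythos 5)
There is a genuine gap, and it is fatal to the argument: the conclusion of your Step 1 is false. The hypotheses of the lemma do \emph{not} force $\partial K$ and $\partial\tilde K$ to meet in exactly two points. Indeed, the standard configuration the lemma is meant to describe (see Figures \ref{fig:jon} and \ref{no cut index figure} in the paper) has $2m$ crossings for an arbitrary $m\ge 1$: take $K=\bar\bbD$ and let $\partial\tilde K$ cross $\partial K$ in $m$ shallow ``bumps'' arranged along an arc of $\partial K$, closed up outside by one long arc. Then $K\setminus\tilde K$ is the disk minus $m$ shallow bumps (connected) and $\tilde K\setminus K$ is connected as well, so the domains do not cut each other, yet $\partial K\cap\partial\tilde K$ has $2m$ points. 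The error in your Euler-characteristic bookkeeping is the face classification: the $2m+1$ bounded faces of the graph $\partial K\cup\partial\tilde K$ are \emph{not} exhausted by components of $\operatorname{int}K\cap\operatorname{int}\tilde K$, $\operatorname{int}K\setminus\tilde K$ and $\operatorname{int}\tilde K\setminus K$; there may also be bounded faces lying outside $K\cup\tilde K$ (in the standard picture there are exactly $m-1$ of them, the ``indentations'' between consecutive bumps). With these included, your count reads $\#(\operatorname{int}K\cap\operatorname{int}\tilde K)=(2m+1)-1-1-(m-1)=m$, which is consistent for every $m$ and forces nothing. (Your clopen-frontier argument rules out faces of the three listed types having frontier only on one curve; it does not rule out this fourth type of face.)

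Because Step 1 collapses, Step 2 only treats the two-crossing ``lens'' case, which is already covered by the Circle Index Lemma's setting and is not the content of Lemma \ref{uniqa}; the real work is uniqueness for arbitrary $2m$, which your proposal never addresses. Moreover, the lemma is used later precisely in situations with many crossings (the proof of Lemma \ref{no cut index} reduces to the standard $2m$-crossing picture), so a proof valid only for $m=1$ cannot serve. The paper's argument instead normalizes $K=\bar\bbD$, shows the crossing points alternate between entry and exit of $\partial\tilde K$, and then uses connectivity of $K\setminus\tilde K$ together with the orientations to show that after entering at one crossing point $\partial\tilde K$ must exit at the adjacent one, and similarly outside (up to one global binary choice fixed by positive orientation); this pins down the combinatorial pattern of the curve for every $m$, after which a Schoenflies-type normalization as in your Step 2 is indeed the right way to upgrade the combinatorial data to a statement about topological configuration.
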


\begin{proof}
First, suppose without loss of generality that $K$ is the closed unit disk $\bar\bbD$. Let $2m\ge 2$ be the number of intersection points of $\partial K$ with $\partial \tilde K$. Note also that we may without loss of generality pick these arbitrarily along $\partial K = \partial \bbD$. Label these $z_1,\ldots,z_{2m}$ in clockwise order around $\partial K = \partial \bbD$. This brings us to the situation of Figure \ref{jon1}.

Orient $\partial K$ and $\partial \tilde K$ as usual. We now follow what happens as we traverse $\partial \tilde K$. By relabeling we may suppose that $\partial \tilde K$ enters $K$ at $z_1$. Because the interior of $\tilde K$ stays to the left of $\partial \tilde K$, and because $\partial \tilde K$ crosses $\partial K$ at every point where the two curves meet, it follows that $\partial \tilde K$ exits $K$ at $z_2$. The same reasoning allows us to conclude that $\partial \tilde K$ enters $K$ at $z_3$, etc., so we get that $\partial \tilde K$ enters $K$ at $z_i$ for all odd $1\le i\le 2m$, and exits $K$ at $z_i$ for all even $1\le i\le 2m$. This brings us to the situation of Figure \ref{jon2}.

We now consider where $\partial \tilde K$ goes after it crosses $z_1$. Denote by $z_i$ the point of $\partial K \cap \partial \tilde K$ at which it arrives immediately after crossing $z_1$, noting that then $i$ is even. We wish to establish that then $i=2$, so suppose for contradiction that $i\ne 2$. This brings us to the situation of Figure \ref{jon3}. Then $[z_1\to z_i]_{\partial \tilde K}$ disconnects $K = \bar\bbD$ into two components, call them $A_1$ and $A_2$. We have that every connected component of $K \setminus \tilde K$ must then be completely contained in one of $A_1$ and $A_2$. But $K\setminus \tilde K$ is connected by hypothesis, so one of $A_1$ and $A_2$ must be disjoint from $K\setminus \tilde K$. We then get a contradiction, because, keeping careful track of the orientation of $\partial \tilde K$, we see that there are points of $K \setminus \tilde K$ immediately counterclockwise from $z_1$ along $\partial K$, and points of $K \setminus \tilde K$ immediately clockwise from $z_2$ along $\partial K$, and these lie in different components of $K \setminus [z_1\to z_i]_{\partial \tilde K}$ unless $i=2$. The same reasoning allows us to conclude that, for any odd $1\le j\le 2m$, after entering $K$ at $z_j$, the curve $\partial \tilde K$ exits $K$ at $z_{j+1}$, bringing us to the situation of Figure \ref{jon4}.

By the same reasoning as in the previous paragraph, we get that, for any even $1\le j\le 2m$, after exiting $K$ at $z_j$, the curve $\partial \tilde K$ enters $K$ at $z_{j+1}$, adopting the convention that $z_{2m + 1} = z_1$. However, in this case, there are two ways to connect $z_j$ to $z_{j+1}$: roughly speaking, we may either travel clockwise around the complement of $K=\bar\bbD$ from $z_j$ to $z_{j+1}$, or counterclockwise. If we connect every such pair $z_j$ and $z_{j+1}$ with ``clockwise'' arcs, then the resulting orientation on $\partial \tilde K$ is not positive with respect to the Jordan domain it bounds, see Figure \ref{jon5}. Thus suppose without loss of generality, by relabeling if necessary, that $z_{2m}$ and $z_1$ are connected with a ``counterclockwise'' arc. Now for the remaining pairs $z_j,z_{j+1}$, with $j$ even, there are no choices (up to topological equivalence) about how to draw the connecting sub-arcs of $\partial \tilde K$ between them, and we arrive at the situation of Figure \ref{jon6}.
\end{proof}

\begin{remark}
Which \emph{a priori} topological configurations can occur for two Jordan curves in transverse position is a poorly understood question, and is known as the study of \emph{meanders}. We are fortunate that our setting is nice enough that a statement like that of Lemma \ref{uniqa} is possible. Thanks to Thomas Lam for informing us of the topic of meander theory.
\end{remark}

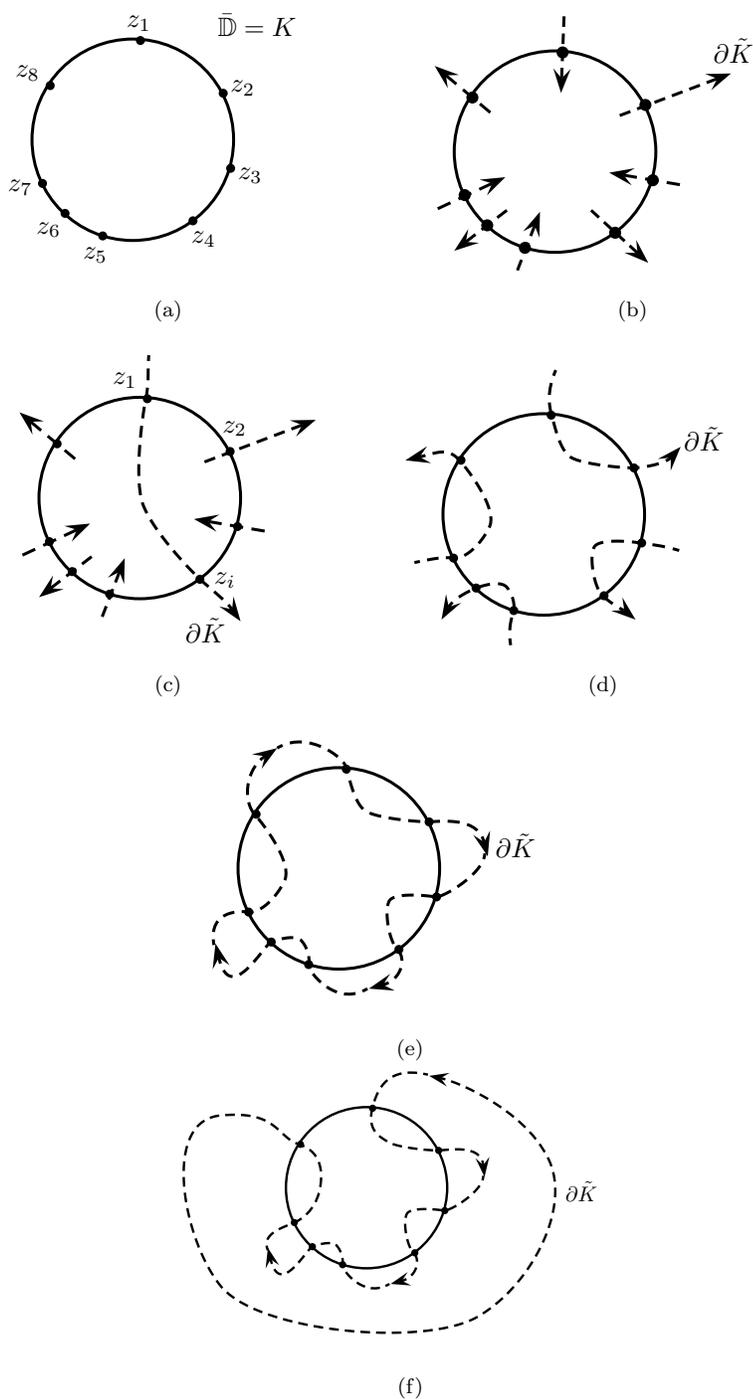
\begin{figure}
\centering
\subfloat[\label{jon1}]
{
\scalebox{1} 
{
\begin{pspicture}(0,-1.748125)(4.7428126,1.748125)
\pscircle[linewidth=0.04,dimen=outer](1.9009376,0.0096875){1.36}
\usefont{T1}{ptm}{m}{n}
\rput(3.5523438,1.5346875){$\bar\bbD = K$}
\psdots[dotsize=0.12](2.0009375,1.3296875)
\psdots[dotsize=0.12](3.1009376,0.6296875)
\psdots[dotsize=0.12](3.2009375,-0.3703125)
\psdots[dotsize=0.12](2.7009375,-1.0703125)
\psdots[dotsize=0.12](1.5009375,-1.2703125)
\psdots[dotsize=0.12](1.0009375,-0.9703125)
\psdots[dotsize=0.12](0.7009375,-0.5703125)
\psdots[dotsize=0.12](0.8009375,0.7296875)
\usefont{T1}{ptm}{m}{n}
\rput(1.9823438,1.5546875){$z_1$}
\usefont{T1}{ptm}{m}{n}
\rput(3.3623438,0.6946875){$z_2$}
\usefont{T1}{ptm}{m}{n}
\rput(3.4623437,-0.4653125){$z_3$}
\usefont{T1}{ptm}{m}{n}
\rput(2.8423438,-1.3053125){$z_4$}
\usefont{T1}{ptm}{m}{n}
\rput(1.4023438,-1.5253125){$z_5$}
\usefont{T1}{ptm}{m}{n}
\rput(0.78234375,-1.1853125){$z_6$}
\usefont{T1}{ptm}{m}{n}
\rput(0.40234375,-0.6253125){$z_7$}
\usefont{T1}{ptm}{m}{n}
\rput(0.5023438,0.8946875){$z_8$}
\end{pspicture} 
}
}\qquad
\subfloat[\label{jon2}]
{
\scalebox{1} 
{
\begin{pspicture}(0,-1.71)(5.261875,1.71)
\pscircle[linewidth=0.04,dimen=outer](1.6,-0.11){1.36}
\psdots[dotsize=0.16](1.7,1.21)
\psdots[dotsize=0.16](2.8,0.51)
\psdots[dotsize=0.16](2.9,-0.49)
\psdots[dotsize=0.16](2.4,-1.19)
\psdots[dotsize=0.16](1.2,-1.39)
\psdots[dotsize=0.16](0.7,-1.09)
\psdots[dotsize=0.16](0.4,-0.69)
\psdots[dotsize=0.16](0.5,0.61)
\psline[linewidth=0.04cm,linestyle=dashed,dash=0.16cm 0.1cm,arrowsize=0.1523123cm 2.0,arrowlength=1.4,arrowinset=0.4]{->}(1.72,1.69)(1.68,0.67)
\psline[linewidth=0.04cm,linestyle=dashed,dash=0.16cm 0.1cm,arrowsize=0.1523123cm 2.0,arrowlength=1.4,arrowinset=0.4]{->}(2.46,0.37)(3.94,0.93)
\psline[linewidth=0.04cm,linestyle=dashed,dash=0.16cm 0.1cm,arrowsize=0.1523123cm 2.0,arrowlength=1.4,arrowinset=0.4]{->}(3.26,-0.55)(2.32,-0.39)
\psline[linewidth=0.04cm,linestyle=dashed,dash=0.16cm 0.1cm,arrowsize=0.1523123cm 2.0,arrowlength=1.4,arrowinset=0.4]{->}(2.08,-0.89)(2.84,-1.59)
\psline[linewidth=0.04cm,linestyle=dashed,dash=0.16cm 0.1cm,arrowsize=0.1523123cm 2.0,arrowlength=1.4,arrowinset=0.4]{->}(1.1,-1.69)(1.4,-0.91)
\psline[linewidth=0.04cm,linestyle=dashed,dash=0.16cm 0.1cm,arrowsize=0.1523123cm 2.0,arrowlength=1.4,arrowinset=0.4]{->}(0.96,-0.89)(0.26,-1.43)
\psline[linewidth=0.04cm,linestyle=dashed,dash=0.16cm 0.1cm,arrowsize=0.1523123cm 2.0,arrowlength=1.4,arrowinset=0.4]{->}(0.04,-0.87)(0.94,-0.45)
\psline[linewidth=0.04cm,linestyle=dashed,dash=0.16cm 0.1cm,arrowsize=0.1523123cm 2.0,arrowlength=1.4,arrowinset=0.4]{->}(0.74,0.41)(0.0,1.03)
\usefont{T1}{ptm}{m}{n}
\rput(3.9714062,1.235){$\partial \tilde K$}
\end{pspicture} 
}
}

\subfloat[\label{jon3}]
{
\scalebox{1} 
{
\begin{pspicture}(0,-1.9584374)(3.96,1.9384375)
\pscircle[linewidth=0.04,dimen=outer](1.6,0.0184375){1.36}
\psdots[dotsize=0.12](1.7,1.3384376)
\psdots[dotsize=0.12](2.8,0.6384375)
\psdots[dotsize=0.12](2.9,-0.3615625)
\psdots[dotsize=0.12](2.4,-1.0615625)
\psdots[dotsize=0.12](1.2,-1.2615625)
\psdots[dotsize=0.12](0.7,-0.9615625)
\psdots[dotsize=0.12](0.4,-0.5615625)
\psdots[dotsize=0.12](0.5,0.7384375)
\psline[linewidth=0.04cm,linestyle=dashed,dash=0.16cm 0.1cm,arrowsize=0.1523123cm 2.0,arrowlength=1.4,arrowinset=0.4]{->}(2.46,0.4984375)(3.94,1.0584375)
\psline[linewidth=0.04cm,linestyle=dashed,dash=0.16cm 0.1cm,arrowsize=0.1523123cm 2.0,arrowlength=1.4,arrowinset=0.4]{->}(3.26,-0.4215625)(2.32,-0.2615625)
\psline[linewidth=0.04cm,linestyle=dashed,dash=0.16cm 0.1cm,arrowsize=0.1523123cm 2.0,arrowlength=1.4,arrowinset=0.4]{->}(1.1,-1.5615625)(1.4,-0.7815625)
\psline[linewidth=0.04cm,linestyle=dashed,dash=0.16cm 0.1cm,arrowsize=0.1523123cm 2.0,arrowlength=1.4,arrowinset=0.4]{->}(0.96,-0.7615625)(0.26,-1.3015625)
\psline[linewidth=0.04cm,linestyle=dashed,dash=0.16cm 0.1cm,arrowsize=0.1523123cm 2.0,arrowlength=1.4,arrowinset=0.4]{->}(0.04,-0.7415625)(0.94,-0.3215625)
\psline[linewidth=0.04cm,linestyle=dashed,dash=0.16cm 0.1cm,arrowsize=0.1523123cm 2.0,arrowlength=1.4,arrowinset=0.4]{->}(0.74,0.5384375)(0.0,1.1584375)
\usefont{T1}{ptm}{m}{n}
\rput(2.4714062,-1.7365625){$\partial \tilde K$}
\psbezier[linewidth=0.04,linestyle=dashed,dash=0.16cm 0.1cm,arrowsize=0.1523123cm 2.0,arrowlength=1.4,arrowinset=0.4]{->}(1.72,1.9184375)(1.7,1.6984375)(1.7286909,1.5772514)(1.68,1.3384376)(1.6313092,1.0996236)(1.52,0.2784375)(1.64,-0.0615625)(1.76,-0.4015625)(2.2492707,-0.9222526)(2.4,-1.0615625)(2.5507293,-1.2008724)(2.74,-1.3415625)(2.94,-1.6215625)
\usefont{T1}{ptm}{m}{n}
\rput(1.4014063,1.5834374){$z_1$}
\usefont{T1}{ptm}{m}{n}
\rput(2.7314062,-1.0765625){$z_i$}
\usefont{T1}{ptm}{m}{n}
\rput(2.8614063,0.9434375){$z_2$}
\end{pspicture} 
}
}\qquad
\subfloat[\label{jon4}]
{
\scalebox{1} 
{
\begin{pspicture}(0,-1.85)(5.261875,1.85)
\pscircle[linewidth=0.04,dimen=outer](1.84,-0.09){1.36}
\psdots[dotsize=0.12](1.94,1.23)
\psdots[dotsize=0.12](3.04,0.53)
\psdots[dotsize=0.12](3.14,-0.47)
\psdots[dotsize=0.12](2.64,-1.17)
\psdots[dotsize=0.12](1.44,-1.37)
\psdots[dotsize=0.12](0.94,-1.07)
\psdots[dotsize=0.12](0.64,-0.67)
\psdots[dotsize=0.12](0.74,0.63)
\psbezier[linewidth=0.04,linestyle=dashed,dash=0.16cm 0.1cm,arrowsize=0.1523123cm 2.0,arrowlength=1.4,arrowinset=0.4]{->}(2.0,1.83)(1.9,1.61)(1.8946548,1.4084758)(1.94,1.25)(1.9853452,1.0915242)(2.0495415,0.7625156)(2.2,0.67)(2.3504584,0.57748437)(2.86,0.53)(3.02,0.53)(3.18,0.53)(3.46,0.59)(3.66,0.81)
\psbezier[linewidth=0.04,linestyle=dashed,dash=0.16cm 0.1cm,arrowsize=0.1523123cm 2.0,arrowlength=1.4,arrowinset=0.4]{->}(3.64,-0.57)(3.38,-0.49)(3.22,-0.49)(3.12,-0.47)(3.02,-0.45)(2.6064017,-0.41049954)(2.52,-0.51)(2.4335983,-0.60950047)(2.5005684,-1.055488)(2.62,-1.17)(2.7394316,-1.284512)(2.94,-1.39)(3.06,-1.49)
\psbezier[linewidth=0.04,linestyle=dashed,dash=0.16cm 0.1cm,arrowsize=0.1523123cm 2.0,arrowlength=1.4,arrowinset=0.4]{->}(1.4,-1.83)(1.36,-1.59)(1.42,-1.45)(1.44,-1.37)(1.46,-1.29)(1.42,-1.13)(1.34,-1.05)(1.26,-0.97)(1.04,-1.03)(0.94,-1.07)(0.84,-1.11)(0.64,-1.21)(0.48,-1.49)
\psbezier[linewidth=0.04,linestyle=dashed,dash=0.16cm 0.1cm,arrowsize=0.1523123cm 2.0,arrowlength=1.4,arrowinset=0.4]{->}(0.12,-0.73)(0.32,-0.69)(0.48,-0.65)(0.64,-0.67)(0.8,-0.69)(1.02,-0.47)(1.12,-0.25)(1.22,-0.03)(0.88,0.45)(0.72,0.63)(0.56,0.81)(0.42,0.73)(0.0,0.63)
\usefont{T1}{ptm}{m}{n}
\rput(3.9714062,0.915){$\partial \tilde K$}
\end{pspicture} 
}
}

\subfloat[\label{jon5}]
{
\scalebox{1} 
{
\begin{pspicture}(0,-1.87)(5.381875,1.87)
\pscircle[linewidth=0.04,dimen=outer](1.74,0.01){1.36}
\psdots[dotsize=0.12](1.84,1.33)
\psdots[dotsize=0.12](2.94,0.63)
\psdots[dotsize=0.12](3.04,-0.37)
\psdots[dotsize=0.12](2.54,-1.07)
\psdots[dotsize=0.12](1.34,-1.27)
\psdots[dotsize=0.12](0.84,-0.97)
\psdots[dotsize=0.12](0.54,-0.57)
\psdots[dotsize=0.12](0.64,0.73)
\psbezier[linewidth=0.04,linestyle=dashed,dash=0.16cm 0.1cm,arrowsize=0.1523123cm 2.0,arrowlength=1.4,arrowinset=0.4]{->}(0.92,1.61)(1.3,1.85)(1.7946547,1.5084757)(1.84,1.35)(1.8853452,1.1915243)(1.9495416,0.8625156)(2.1,0.77)(2.2504585,0.6774844)(2.76,0.63)(2.92,0.63)(3.08,0.63)(3.58,0.71)(3.72,0.17)
\psbezier[linewidth=0.04,linestyle=dashed,dash=0.16cm 0.1cm,arrowsize=0.1523123cm 2.0,arrowlength=1.4,arrowinset=0.4]{->}(3.68,0.21)(3.68,-0.05)(3.12,-0.39)(3.02,-0.37)(2.92,-0.35)(2.5064015,-0.31049952)(2.42,-0.41)(2.3335984,-0.50950044)(2.4005682,-0.955488)(2.52,-1.07)(2.6394317,-1.184512)(2.58,-1.47)(2.14,-1.59)
\psbezier[linewidth=0.04,linestyle=dashed,dash=0.16cm 0.1cm,arrowsize=0.1523123cm 2.0,arrowlength=1.4,arrowinset=0.4]{->}(2.12,-1.59)(1.7,-1.85)(1.32,-1.35)(1.34,-1.27)(1.36,-1.19)(1.32,-1.03)(1.24,-0.95)(1.16,-0.87)(0.94,-0.93)(0.84,-0.97)(0.74,-1.01)(0.3,-1.83)(0.1,-0.99)
\psbezier[linewidth=0.04,linestyle=dashed,dash=0.16cm 0.1cm,arrowsize=0.1523123cm 2.0,arrowlength=1.4,arrowinset=0.4]{->}(0.08,-1.01)(0.0,-0.57)(0.38,-0.55)(0.54,-0.57)(0.7,-0.59)(0.92,-0.37)(1.02,-0.15)(1.12,0.07)(0.78,0.55)(0.62,0.73)(0.46,0.91)(0.5,1.33)(0.92,1.61)
\usefont{T1}{ptm}{m}{n}
\rput(4.0914063,0.315){$\partial \tilde K$}
\end{pspicture} 
}
}\qquad
\subfloat[\label{jon6}]
{
\scalebox{0.8} 
{
\begin{pspicture}(0,-2.351911)(8.272506,2.351911)
\pscircle[linewidth=0.04,dimen=outer](3.410631,0.31191108){1.36}
\psdots[dotsize=0.12](3.510631,1.631911)
\psdots[dotsize=0.12](4.610631,0.93191105)
\psdots[dotsize=0.12](4.710631,-0.068088934)
\psdots[dotsize=0.12](4.210631,-0.76808894)
\psdots[dotsize=0.12](3.010631,-0.9680889)
\psdots[dotsize=0.12](2.510631,-0.6680889)
\psdots[dotsize=0.12](2.210631,-0.26808894)
\psdots[dotsize=0.12](2.310631,1.031911)
\psbezier[linewidth=0.04,linestyle=dashed,dash=0.16cm 0.1cm,arrowsize=0.1523123cm 2.0,arrowlength=1.4,arrowinset=0.4]{->}(4.430631,2.191911)(3.810631,2.331911)(3.610631,2.0319111)(3.510631,1.651911)(3.410631,1.271911)(3.6201725,1.1644267)(3.770631,1.0719111)(3.9210894,0.97939545)(4.430631,0.93191105)(4.590631,0.93191105)(4.750631,0.93191105)(5.310631,1.0519111)(5.3706307,0.49191105)
\psbezier[linewidth=0.04,linestyle=dashed,dash=0.16cm 0.1cm,arrowsize=0.1523123cm 2.0,arrowlength=1.4,arrowinset=0.4]{->}(5.3506308,0.51191103)(5.3506308,0.25191107)(4.790631,-0.08808894)(4.690631,-0.068088934)(4.590631,-0.048088938)(4.1770325,-0.008588469)(4.090631,-0.10808894)(4.0042295,-0.2075894)(4.0711994,-0.65357697)(4.190631,-0.76808894)(4.310063,-0.8826009)(4.250631,-1.1680889)(3.810631,-1.2880889)
\psbezier[linewidth=0.04,linestyle=dashed,dash=0.16cm 0.1cm,arrowsize=0.1523123cm 2.0,arrowlength=1.4,arrowinset=0.4]{->}(3.790631,-1.2880889)(3.370631,-1.5480889)(2.9906309,-1.0480889)(3.010631,-0.9680889)(3.030631,-0.88808894)(2.9906309,-0.7280889)(2.910631,-0.64808893)(2.830631,-0.56808895)(2.610631,-0.62808895)(2.510631,-0.6680889)(2.410631,-0.70808893)(1.970631,-1.5280889)(1.770631,-0.68808895)
\usefont{T1}{ptm}{m}{n}
\rput(6.982037,0.25691107){$\partial \tilde K$}
\psbezier[linewidth=0.04,linestyle=dashed,dash=0.16cm 0.1cm,arrowsize=0.1523123cm 2.0,arrowlength=1.4,arrowinset=0.4]{<-}(4.450631,2.171911)(4.950631,2.071911)(6.090631,1.5719111)(6.450631,0.75191104)(6.810631,-0.068088934)(6.1057973,-1.3642668)(5.230631,-1.848089)(4.3554645,-2.331911)(1.6253798,-2.1008074)(0.990631,-1.3280889)(0.35588214,-0.5553704)(0.0,1.4150878)(0.97063094,1.511911)(1.9412619,1.6087344)(1.910631,1.151911)(2.290631,1.031911)(2.670631,0.9119111)(2.690631,0.55191106)(2.630631,0.27191105)(2.570631,-0.008088937)(2.390631,-0.18808894)(2.210631,-0.26808894)(2.030631,-0.34808895)(1.650631,-0.44808894)(1.770631,-0.7280889)
\end{pspicture} 
}
}
\caption
{
\label{fig:jon}{\bf The construction of the topologically unique pair $K$ and $\tilde K$ of transversely positioned closed Jordan domains, not cutting each other, having boundaries meeting at 8 points.}  The orientation on $\partial K = \partial \bbD$ is the usual, positive one, that is, the counterclockwise one. In all cases, dashed arcs are sub-arcs of $\partial \tilde K$.
}
\end{figure}

\begin{proof}[Proof of Lemma \ref{no cut index}]
\phantomsection \label{no cut index proof}
In light of Lemmas \ref{prop:index invariance} and \ref{uniqa}, we may suppose that $K$ and $\tilde K$ are as in Figure \ref{no cut index figure}, of course drawn with the correct number of meeting points between $\partial K$ and $\partial \tilde K$. Recalling that $\eta(f)$ counts the winding number of the vector $f(z) - z$ around the origin, we consider when it is possible for $f(z) - z$ to be a positive real number. If $z$ does not lie on the intersection of the left side of the rectangle $\partial K$ with the interior of $\tilde K$, then $f(z) - z$ certainly has either a negative real component, or a non-zero imaginary component. Thus pick a $z$ lying in this intersection. Then the only way for $f(z) - z$ to be real and positive is for $f(z)$ to lie on the semicircular sub-arc of $\partial \tilde K$ to the right of $z$, as in the figure. But then, considering the orientations on $\partial K$ and $\partial \tilde K$, we get that the vector $f(z) - z$ must always be turning counterclockwise at such a $z$, as we traverse $\partial K$ in the positive direction. We conclude that whenever the curve $\{f(z) - z\}_{\partial K}$ crosses the positive real axis, it does so in the positive direction, thus the winding number of this curve around the origin is non-negative.
\end{proof}

\begin{figure}
\centering
\scalebox{1} 
{
\begin{pspicture}(0,-3.49)(7.3028126,3.497)
\psframe[linewidth=0.04,dimen=outer](6.6609373,2.91)(3.6609375,-3.49)
\psarc[linewidth=0.04,linestyle=dashed,dash=0.16cm 0.1cm](3.6609375,2.11){0.6}{-90.0}{90.0}
\psarc[linewidth=0.04,linestyle=dashed,dash=0.16cm 0.1cm](3.6609375,-0.29){0.6}{-90.0}{90.0}
\psarc[linewidth=0.04,linestyle=dashed,dash=0.16cm 0.1cm](3.6609375,-2.69){0.6}{-90.0}{90.0}
\psarc[linewidth=0.04,linestyle=dashed,dash=0.16cm 0.1cm](3.6609375,0.91){0.6}{90.0}{270.0}
\psarc[linewidth=0.04,linestyle=dashed,dash=0.16cm 0.1cm](3.6609375,-1.49){0.6}{90.0}{270.0}
\psarc[linewidth=0.04,linestyle=dashed,dash=0.16cm 0.1cm](3.6609375,-0.29){3.0}{90.0}{270.0}
\psline[linewidth=0.04cm,linestyle=dotted,dotsep=0.05cm,arrowsize=0.05291667cm 2.0,arrowlength=1.4,arrowinset=0.4]{->}(3.6609375,2.11)(4.2609377,2.11)
\psdots[dotsize=0.12](3.6609375,2.11)
\psdots[dotsize=0.12](4.2609377,2.11)
\usefont{T1}{ptm}{m}{n}
\rput(4.782344,2.135){$f(z)$}
\usefont{T1}{ptm}{m}{n}
\rput(3.3223438,2.115){$z$}
\usefont{T1}{ptm}{m}{n}
\rput(6.952344,1.235){$K$}
\usefont{T1}{ptm}{m}{n}
\rput(0.67234373,1.535){$\tilde K$}
\psbezier[linewidth=0.0139999995,arrowsize=0.05291667cm 2.0,arrowlength=1.4,arrowinset=0.4]{<-}(3.9209375,2.19)(4.0209374,3.11)(4.0209374,3.49)(3.4009376,3.37)(2.7809374,3.25)(2.4409375,3.11)(1.9409375,3.09)
\usefont{T1}{ptm}{m}{n}
\rput(1.1623437,3.075){$f(z) - z$}
\end{pspicture} 
}
\caption
{
\label{no cut index figure} We see that whenever $f(z) - z$ is positive and real, as we traverse $\partial K$ and $\partial \tilde K$ positively, the point $f(z)$ is moving upward, and the point $z$ is moving downward, so the vector $f(z) - z$ is winding counterclockwise, thus in the positive direction.
}
\end{figure}
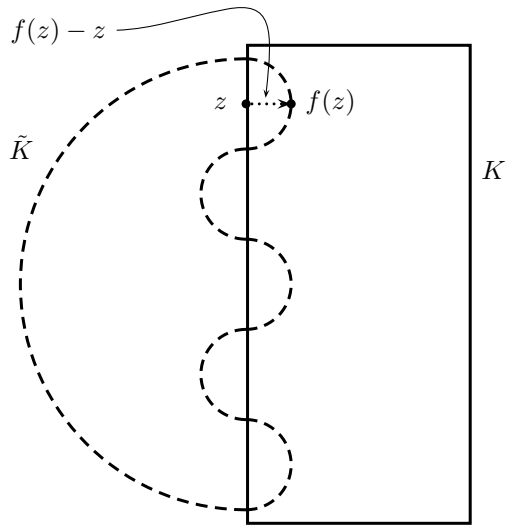

\begin{remark}
The same Figure \ref{no cut index figure} can be used to show that under the hypotheses of Lemma \ref{no cut index}, we get that $\eta(f) \le 2$. Thus the only fixed-point indices which can be achieved in this setting are $0, 1, 2$, and all three of these occur. We do not use these facts, so working out the details is left as an exercise for the interested reader.
\end{remark}

\section{Proof of rigidity and uniformization of circle packings}
\label{sec:rigid in plane}
\label{cp proofs}

As an example of the power of fixed-point index, in this section we prove three rigidity and uniformization theorems on circle packings. Theorem \ref{rigid in sphere} is usually credited to Koebe \cite{koebe-1936}, Andreev \cite{MR0273510}, and Thurston\footnote{Originally at his talk at the International Congress of Mathematicians, Helsinki, 1978, according to \cite{MR1303402}*{p.\ 135}. See also \cite{thurston-gt3m-notes}*{Chapter 13}.}. All of their proofs were via methods different from ours. Theorems \ref{rigid in plane} and \ref{thm:no luv} are originally due to Schramm \cite{MR1076089}*{Rigidity Theorems 1.1, 5.1}, who proved them essentially via the Incompatibility Theorem \ref{incompat}, although his proof of the Incompatibility Theorem is not via fixed-point index techniques. The first ones to study circle packings by fixed-point index techniques were He and Schramm in \cite{MR1207210}, although they did not proceed via the Incompatibility Theorem, instead applying other normalizations to the packings in question, to similarly argue by contradiction. For further references on circle packing, see for example the articles \citelist{\cite{MR1303402} \cite{MR2884870}} and their bibliographies.\medskip

We begin with some basic definitions. A \emph{circle packing} is a collection $\calP = \{D_i\}$ of pairwise interiorwise disjoint round closed disks in the Riemann sphere $\hat\bbC$. The \emph{contact graph} of a circle packing is the graph having a vertex for every disk of the packing, so that two vertices share an edge if and only if the corresponding disks meet. Then the following hold:

\begin{theorem}
\label{rigid in sphere}
Suppose that $\calP$ and $\tilde\calP$ are circle packings in $\hat\bbC$, sharing a contact graph that triangulates the 2-sphere $\bbS^2$. Then $\calP$ and $\tilde\calP$ differ by a M\"obius or anti-M\"obius transformation.
\end{theorem}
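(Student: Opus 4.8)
The plan is to deduce this rigidity statement from the Incompatibility Theorem \ref{incompat}, using it as Schramm did for the planar rigidity theorems \ref{rigid in plane} and \ref{thm:no luv}. Since $\bbS^2$ is compact the triangulation is finite, so we are in the Koebe--Andreev--Thurston setting. First I would deal with orientation: the contact graph, being the $1$-skeleton of a triangulation of $\bbS^2$, is $3$-connected, so by Whitney's theorem it has a spherical embedding unique up to reflection, and hence the graph isomorphism identifying the contact graphs of $\calP$ and $\tilde\calP$ either respects or reverses the induced orientations. Replacing $\tilde\calP$ by its complex conjugate in the second case, it suffices to produce an honest M\"obius transformation carrying $\calP$ onto $\tilde\calP$, the reversed case then yielding an anti-M\"obius transformation.

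Next I would normalize the combinatorics at one face. Fix a face $\{a,b,c\}$ of the triangulation; in each packing the disks indexed by $a,b,c$ form a labeled mutually tangent triple of round disks in $\hat\bbC$. M\"obius transformations act simply transitively on such triples: a M\"obius map preserving a labeled triple fixes its three pairwise tangency points, hence is the identity, and transitivity follows by sending the tangency points to $0,1,\infty$. So, post-composing $\calP$ and $\tilde\calP$ with uniquely determined M\"obius transformations, I may assume that $D_a,D_b,D_c$ and $\tilde D_a,\tilde D_b,\tilde D_c$ are one and the same fixed triple, chosen so that one of the two curvilinear-triangle components of $\hat\bbC\setminus\mathrm{int}(D_a\cup D_b\cup D_c)$ is a bounded topological triangle $\Omega\subset\bbC$. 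Because the two packings now share the outer face and its orientation, the remaining disks of $\calP$ and of $\tilde\calP$ all lie in this same $\Omega$, and by the combinatorial hypothesis both give packings of the topological triangle $\Omega$ realizing the triangulated disk obtained from $T$ by deleting the face $\{a,b,c\}$. It therefore suffices to show that two packings of $\Omega$ by round disks with the same contact graph must coincide.

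This last step I would prove by contradiction, in the spirit of Schramm's rigidity arguments. Assume the two packings of $\Omega$ are not identical; after a generic M\"obius perturbation of one of them, harmless by Lemma \ref{prop:index invariance}, put both in transverse position. Since they fill the same region $\Omega$, agree on $\partial\Omega$, but distribute the interior disks differently, a spreading argument should locate a combinatorial ``window'' --- a pair of adjacent faces of $T$ --- whose carrier in $\calP$ is a topological rectangle $R$, whose carrier in $\tilde\calP$ is a topological rectangle $\tilde R$, whose interior-vertex disks form combinatorially equivalent packings of $R$ and of $\tilde R$, and such that $R$ and $\tilde R$ sit in the ``crosswise'' topological configuration of Figure \ref{topo conf incompat rect}. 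The Incompatibility Theorem \ref{incompat} then yields an index $i$ with $D_i$ and $\tilde D_i$ cutting each other. But the set-theoretic difference of two round disks in $\hat\bbC$ is always connected --- two distinct circles meet in at most two points --- so round disks never cut each other, a contradiction. Hence the normalized packings coincide, and $\calP$ and $\tilde\calP$ differ by a M\"obius, or in the reversed-orientation case an anti-M\"obius, transformation.

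The main obstacle is the spreading step, namely extracting the crosswise window $R,\tilde R$ from the bare assumption that the two normalized packings of $\Omega$ differ: one must rule out that every combinatorial window produces rectangles in a harmless ``compatible'' configuration (nested, disjoint, or overlapping without linking), and must simultaneously preserve transversality so that Theorem \ref{incompat} genuinely applies. This is where quantitative input --- a discrete-maximum-principle or ring-lemma type control on how two packings of a common region with common boundary data can diverge --- is needed, to force a first discrepancy to propagate into an actual linking. The remaining ingredients, namely the M\"obius normalization of the outer face, the orientation bookkeeping for the anti-M\"obius alternative, the identification of a window with a packing of a topological rectangle in the precise sense of Section \ref{incompat section}, and the base case of a tetrahedral $T$ (where uniqueness is just that of the inner Soddy circle of three mutually tangent disks), are comparatively routine.
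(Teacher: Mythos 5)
Your overall strategy is the paper's: normalize so that the three disks of a chosen face coincide in the two packings, then derive a contradiction from the Incompatibility Theorem \ref{incompat} because two round disks can never cut each other. However, the step you yourself flag as the ``main obstacle'' --- producing a pair of corresponding topological quadrilaterals in the crosswise configuration of Figure \ref{topo conf incompat rect} from the bare assumption that the normalized packings differ --- is a genuine gap, and the quantitative machinery you propose to fill it with (a discrete maximum principle or ring-lemma type control) is neither supplied nor needed. Your appeal to Lemma \ref{prop:index invariance} to make a ``generic M\"obius perturbation'' of one packing harmless also does not do what you want: that lemma concerns applying a single orientation-preserving homeomorphism of $\bbC$ to \emph{both} domains simultaneously, whereas moving only one packing changes the topological configuration of the pair --- which is exactly the quantity your argument must control. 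A generic perturbation gives you transversality, but nothing forces the resulting configuration to be the linked one rather than one of the harmless nested or disjoint ones; and since your normalization is made at an arbitrary face, the first discrepancy between the packings may lie deep inside $\Omega$, far from anything your window construction sees.

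The paper closes this gap with two elementary moves. First, the base face $f_0=\left<v_1,v_2,v_3\right>$ is not arbitrary: Observation \ref{obs:old prf} shows one may choose $f_0$ so that, after normalizing $D_i=\tilde D_i$ for $i=1,2,3$, the disk $D_4$ (corresponding to the vertex $v_4$ completing the face across the edge $v_2v_3$) differs from $\tilde D_4$ --- otherwise the packings already coincide. This relocates the discrepancy right next to the normalized face. Second, after further normalizing ($\infty$ in the common interstice of $D_1,D_2,D_3$, the three disks of unit radius, symmetric about the horizontal axis) and assuming without loss of generality that $D_4$ has larger radius than $\tilde D_4$, one translates all of $\calP$ to the right by a small $\varepsilon>0$. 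The quadrilaterals $Q$ and $\tilde Q$, bounded by arcs of $\partial D_1,\partial D_2,\partial D_3,\partial D_4$ and their tilded counterparts (not carriers of interior faces located by a spreading argument), are then automatically in the crosswise position of Figure \ref{fig:qq3} for an open interval of $\varepsilon$, and transversality holds for all but finitely many $\varepsilon$. So genericity is used only for transverse position; the linking itself is forced by the explicit choice of $f_0$ and the direction of the translation, with no packing estimates at all.
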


\begin{theorem}
\label{rigid in plane}
Suppose that $\calP$ and $\tilde\calP$ are circle packings which are locally finite in $\bbC$, sharing a contact graph that triangulates a topological open disk. Then $\calP$ and $\tilde\calP$ differ by a Euclidean similarity.
\end{theorem}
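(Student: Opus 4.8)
The plan is to argue by contradiction, following the strategy used for Theorem~\ref{rigid in sphere}: from a pair of non-similar packings I will manufacture a pair of packed topological rectangles to which the Incompatibility Theorem~\ref{incompat} applies, and then derive a contradiction from the elementary fact that two round disks never cut each other. Indeed, if $D$ and $\tilde D$ are round closed disks in transverse position, then $\partial D$ and $\partial\tilde D$ meet in exactly two points (zero or one meeting points being either non-transverse, or giving disjoint or nested disks), so each of $D\setminus\tilde D$ and $\tilde D\setminus D$ is a disk with a lens removed along a boundary arc, hence connected.

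First I would build an exhaustion. Writing $G$ for the common contact graph, which triangulates a topological open disk, choose finite simply connected subcomplexes $G_1\subset G_2\subset\cdots$ with $\bigcup_n G_n=G$, each having at least four boundary vertices. Using local finiteness in $\bbC$, the carrier of the sub-packing $\{D_v:v\in V(G_n)\}$ of $\calP$ is a closed Jordan domain $R_n$, and marking four of its boundary vertices as corners makes $(R_n,\{D_v\}_{v\in V(G_n)})$ a packed topological rectangle in the sense of Section~\ref{incompat section}, whose contact graph is the corresponding triangulation of a square; performing the identical construction for $\tilde\calP$ (same subcomplex, same marked vertices) yields a combinatorially identical packed rectangle $(\tilde R_n,\{\tilde D_v\})$. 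Now assume $\calP$ and $\tilde\calP$ are \emph{not} related by a Euclidean similarity. After applying a similarity to $\tilde\calP$ (which keeps every disk round and hence keeps $\tilde\calP$ a packing), one arranges, for $n$ large and for a suitable choice of the four marked corners, that $R_n$ and $\tilde R_n$ lie in the ``incompatible'' topological configuration of Figure~\ref{topo conf incompat rect}; a further arbitrarily small generic similarity puts all of the finitely many relevant pairs of boundary curves into transverse position, without disturbing roundness or the packing property. The Incompatibility Theorem~\ref{incompat} then produces some $i$ with $D_i$ and $\tilde D_i$ cutting each other, contradicting the fact recalled above. Hence $\calP$ and $\tilde\calP$ differ by a Euclidean similarity.

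The main obstacle is the normalization step: showing that non-similarity forces the incompatible rectangle configuration. The plan here is to use the argument of He--Schramm \cite{MR1207210} (see also \cite{MR2131318}*{Chapter~8}): fix a face $\{1,2,3\}$ of $G$ and use the four real parameters of the similarity group to place $\tilde\calP$ so that $\tilde D_1=D_1$ and $D_2,\tilde D_2$ are tangent to $D_1$ at the same point on the same side; if $\calP\neq\tilde\calP$, then the radii must differ somewhere, so $\tilde D_2\subsetneq D_2$ or $D_2\subsetneq\tilde D_2$, and one propagates this ``nesting'' outward along $G_n$ to conclude that, for suitable corner vertices, no remaining adjustment can make the two boundary quadrilaterals compatible, forcing the crossing picture of Figure~\ref{topo conf incompat rect}. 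Turning this propagation into a rigorous statement, and in particular ruling out \emph{all} of the compatible configurations of the two boundary quadrilaterals simultaneously, is the delicate point; the remaining ingredients (genericity of the transversality perturbations, and the verification that local finiteness in $\bbC$ makes every $R_n$ an honest closed Jordan domain with a well-behaved exhaustion) are routine.
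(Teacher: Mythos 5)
There is a genuine gap, and it sits exactly where you flag it: the claim that non-similarity forces, for some large $n$, some auxiliary similarity applied to $\tilde\calP$, and some choice of four corner vertices, the carriers $R_n$ and $\tilde R_n$ into the incompatible configuration of Figure \ref{topo conf incompat rect}. This is not a routine normalization; it is the whole difficulty of the infinite case, and your sketch of ``propagating nesting'' from the He--Schramm normalization does not deliver it. Indeed, under that normalization the natural conclusion is a monotone nesting of corresponding disks, and nested (or otherwise non-crossing) boundary quadrilaterals are a \emph{compatible} configuration to which Theorem \ref{incompat} says nothing; nothing in the proposal rules out that for every $n$ and every choice of corners the two quadrilaterals stay compatible. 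The exhaustion gives you finiteness, but finiteness was never the only obstruction: one must control how the two packings differ ``near infinity,'' and the finite carriers by themselves carry no such control. A secondary defect: $\bigl(R_n,\{D_v\}_{v\in V(G_n)}\bigr)$ is not a packing of a topological rectangle in the sense of Section \ref{incompat section}, because the disks of boundary vertices meet $\partial R_n$ along arcs rather than in at most one point; you would have to let the boundary disks constitute the sides and pack only the interior disks, which also constrains which vertices can serve as corners. That is fixable, but not as stated.

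For comparison, the paper sidesteps the step you could not complete. It normalizes by M\"obius maps so that three corresponding disks coincide and a point of their common interstice goes to $\infty$; local finiteness in $\bbC$ then makes each packing accumulate at a single finite point ($z_\infty$, resp.\ $\tilde z_\infty$). A small translation $T_\varepsilon$ of one packing separates these accumulation points and produces the crossed quadrilaterals $Q$, $\tilde Q$ of Figure \ref{fig:qq3}, built from the coincident-then-shifted disks $D_1,\dots,D_4$ — so the incompatible configuration is automatic from the shift, with no propagation argument. Finiteness is then achieved not by exhaustion but by absorbing all disks near $z_\infty$ and $\tilde z_\infty$ into single ``leftover'' Jordan domains $L$ and $\tilde L$, chosen inside disjoint neighborhoods so that $L$ and $\tilde L$ cannot cut each other; Theorem \ref{incompat} then forces some corresponding pair to cut, contradicting roundness of all the other pieces. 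If you want to rescue your exhaustion scheme, you will still need some device of this kind to control the end of the packing; as written, the proposal does not prove the theorem.
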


\begin{theorem}
\label{thm:no luv}
There cannot be two circle packings $\calP$ and $\tilde \calP$ sharing a contact graph triangulating a topological open disk, so that one is locally finite in $\bbC$ and the other is locally finite in the hyperbolic plane $\bbH^2$, equivalently the open unit disk $\bbD$.
\end{theorem}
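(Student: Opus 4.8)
The plan is to argue by contradiction, via the Incompatibility Theorem \ref{incompat}. Suppose $\calP$ fills $\bbC$ while $\tilde\calP$ fills $\bbD$, both with contact graph $G$ triangulating a topological open disk; write $D_v$ and $\tilde D_v$ for the disks of a vertex $v$ in $\calP$ and in $\tilde\calP$. The aim is to exhibit a combinatorial quadrilateral in $G$ whose realizations as packings of topological rectangles by the round disks of $\calP$ and of $\tilde\calP$ can be positioned so that the two rectangles cut one another. Since two distinct round circles meet in at most two points, no pair of corresponding round disks in transverse position can cut each other, so such a pair of rectangles will contradict Theorem \ref{incompat}.

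First I would build combinatorial exhaustions. Fix a vertex $v_0$; for $k$ large let $B_k\subseteq G$ be the subcomplex of all closed faces within combinatorial distance $k$ of $v_0$, so each $B_k$ is a triangulated closed disk, $B_{k_1}$ lies in the interior of $B_{k_2}$ when $k_2\gg k_1$, and $B_k\nearrow G$. Their carriers $P_k$ in $\calP$ and $\tilde P_k$ in $\tilde\calP$ are closed topological disks with $P_k\nearrow\bbC$ and $\tilde P_k\nearrow\bbD$. The crux is a dichotomy for the conformal modulus of the combinatorial ring $A(k_1,k_2)=B_{k_2}\setminus\operatorname{int}B_{k_1}$. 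On the $\calP$ side: $P_{k_1}$ is bounded while $P_{k_2}$ contains a round disk of radius $\rho_{k_2}\to\infty$, so the ring $P_{k_2}\setminus\operatorname{int}P_{k_1}$ contains arbitrarily thick round annuli and its modulus tends to $\infty$ as $k_2\to\infty$, for each fixed $k_1$. On the $\tilde\calP$ side: since $\tilde P_{k_1}\nearrow\bbD$, the ring $\bbD\setminus\operatorname{int}\tilde P_{k_1}$ is eventually a thin collar of $\partial\bbD$ and its modulus tends to $0$; by monotonicity of the modulus, $\operatorname{mod}(\tilde P_{k_2}\setminus\operatorname{int}\tilde P_{k_1})$ is then uniformly small in $k_2>k_1$ once $k_1$ is large. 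Hence I can first choose $k_1$ so large that the $\tilde\calP$-modulus of $A(k_1,k_2)$ lies below a small absolute constant for all $k_2>k_1$, and then choose $k_2\gg k_1$ so that its $\calP$-modulus exceeds a large absolute constant. These estimates are made entirely in the geometric realizations, so no bounded-geometry hypothesis on $G$ is needed.

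Next I would cut $A(k_1,k_2)$ along a combinatorial arc joining its two boundary cycles (run through interstices so that no round disk is slit), obtaining a combinatorial quadrilateral $H$ with sides $a,c$ (the cut-open inner and outer cycles) and $b,d$ (the two copies of the cut), and realize $H$ as a packing of a topological rectangle $R_\calP$ by the round disks of $\calP$, respectively $R_{\tilde\calP}$ by those of $\tilde\calP$, in the sense of Section \ref{incompat section}. Then the quadrilateral modulus $\lambda_{R_\calP}(a,c)$ equals the $\calP$-modulus of the ring, hence is large, so $R_\calP$ is strongly stretched in the direction from $a$ to $c$; while $\lambda_{R_{\tilde\calP}}(a,c)$ equals the $\tilde\calP$-modulus, hence is small, so $R_{\tilde\calP}$ is strongly stretched in the transverse direction, from $b$ to $d$. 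Applying a M\"obius normalization to each packing, I would position $R_\calP$ as a long thin rectangle crossed through its $a$- and $c$-ends by the long thin rectangle $R_{\tilde\calP}$, in a plus pattern; with the stretching factors large enough, both $R_\calP\setminus R_{\tilde\calP}$ and $R_{\tilde\calP}\setminus R_\calP$ become disconnected, so the two rectangles cut each other, and---arguing as for Figures \ref{fig:ex neg winding forced} and \ref{topo conf incompat rect}---the pair $(R_\calP,R_{\tilde\calP})$ with the forced side identification $a\leftrightarrow a,\;b\leftrightarrow b,\;c\leftrightarrow c,\;d\leftrightarrow d$ sits in the topological configuration demanded by Theorem \ref{incompat}. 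A generic choice of the two normalizing M\"obius maps also makes the two sub-packings transverse and puts their intersection points in general position, so all hypotheses of Theorem \ref{incompat} are met. The theorem then produces a vertex $v$ with $D_v$ and $\tilde D_v$ cutting each other, which is impossible for round disks in transverse position. This contradiction proves Theorem \ref{thm:no luv}.

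The main obstacle is the middle block: converting the soft modulus dichotomy into the precise cutting configuration that the Incompatibility Theorem consumes. The delicate points are (i) realizing the combinatorial ring honestly as a packing of a topological rectangle by round disks---slitting the annular carrier along an arc that runs through interstices, and checking the resulting picture against the definition in Section \ref{incompat section}; (ii) verifying that a topological rectangle of large quadrilateral modulus can be placed, using only a M\"obius transformation of the ambient sphere, so that its transverse crossing with the other rectangle genuinely disconnects both complements; and (iii) matching the labeled configuration of Figure \ref{topo conf incompat rect} exactly, since it is that configuration, and not another, which forces $\eta(\phi_R)=-1$ in the proof of Theorem \ref{incompat}. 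The remaining ingredients---that local finiteness of a disk-type packing forces its carrier to be all of $\bbC$, respectively all of $\bbD$, the exhaustion and modulus estimates, and transversality by genericity---are routine.
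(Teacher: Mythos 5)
Your overall strategy (contradiction via the Incompatibility Theorem \ref{incompat}, with round disks unable to cut each other) matches the paper, but your route to the hypotheses of that theorem is different and, at its central step, has a genuine gap. The modulus dichotomy itself is fine (after the standard repair that combinatorial balls $B_k$ need not be disks and must have their holes filled, and after adjusting the slit-annulus construction to the paper's notion of a packed rectangle, whose boundary disks meet $\partial R$ along arcs rather than in at most one point, exactly as for the paper's own $Q$). The unsupported step is the conversion of the modulus mismatch into the labeled geometric overlay of Figure \ref{topo conf incompat rect} ``using only a M\"obius normalization.'' Conformal modulus is a conformal invariant, not a shape descriptor: a quadrilateral whose modulus from $a$ to $c$ is enormous need not be, and cannot in general be made by any M\"obius transformation to be, a geometrically long thin rectangle (e.g.\ a unit square with the $b$- and $d$-sides taken as two tiny nearby boundary arcs has huge modulus). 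The only maps that straighten such a quadrilateral are Riemann maps, and those destroy the roundness of the packing disks, which is precisely what your final contradiction needs. So there is no argument that $R_\calP$ and $R_{\tilde\calP}$ can be positioned in the specific corner-labeled cutting configuration that forces $\eta(\phi_R)=-1$; ``genericity'' only handles transversality, not this. Without additional geometric control on the carriers (some bounded-geometry/Ring-Lemma-type input, which you explicitly forgo, and which still would not obviously suffice), the modulus dichotomy by itself produces no contradiction: the two realizations of the same combinatorial annulus are simply allowed to have different moduli.

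For contrast, the paper gets the required configuration by brute-force normalization rather than by moduli: after M\"obius maps making $D_i=\tilde D_i$ for $i=1,2,3$ and sending a point of their common interstice to $\infty$, the remaining disks of the $\bbC$-packing accumulate at a single point $z_\infty$ while those of the $\bbD$-packing accumulate on a round circle $C$; a small translation $T_\varepsilon$ (chosen so $z_\infty\notin C$ and so that transversality holds) puts the quadrilaterals $Q$ and $\tilde Q$ in the configuration of Figure \ref{fig:qq3} by construction, and the two accumulation tails are lumped into leftover Jordan domains $L$ and $\tilde L$ that are disjoint or nested, hence do not cut each other; Theorem \ref{incompat} then yields the contradiction. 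If you want to salvage your extremal-length idea, the natural path is the He--Schramm style type argument with discrete extremal length, which bypasses the Incompatibility Theorem entirely; as written, your hybrid of continuous moduli plus Incompatibility does not close.
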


The rest of this section consists of the proofs of these three theorems. Before moving on, we make one note:

\begin{remark}
The statement for locally finite packings in the hyperbolic plane $\bbH^2 \cong \bbD$ analogous to Theorem \ref{rigid in plane} also holds. However, to apply our techniques to prove it, one would need to show that two combinatorially equivalent packings, both locally finite in $\bbH^2$, having contact graphs triangulating $\bbH^2$, induce a homeomorphism, or at least some appropriately behaved identification, on the boundary $\partial_\infty \bbH^2 \cong \partial \bbD$. This turns out to be true (in fact, it follows from the rigidity theorem we discuss in this remark), but non-trivial.
\end{remark}

\begin{proof}[Proof of Theorem \ref{rigid in sphere}]
The argument is by contradiction. The main idea is to superimpose the packings $\calP$ and $\tilde\calP$ on the Riemann sphere in a convenient way. In particular, we isolate two topological quadrilaterals $Q$ and $\tilde Q$ so that they are packed in combinatorially equivalent ways by disks of $\calP$ and $\tilde \calP$, but so that $Q$ and $\tilde Q$ cut each other, as in Figure \ref{clown2}. Then the simple observation that two round disks cannot cut each other gives us our desired contradiction, via the Incompatibility Theorem \ref{incompat}.\medskip

Let $X = (V,E,F)$ be the common triangulation of $\bbS^2$ which $\calP$ and $\tilde\calP$ realize. Suppose for contradiction that $\calP$ and $\tilde \calP$ are not equivalent under any M\"obius or anti-M\"obius transformation. For the first part of the proof, we apply a sequence of normalizations to $\calP$ and to $\tilde\calP$. Let $f_0 = \left<v_1,v_2,v_3\right> \in F$ be a face of $X$. We first normalize via a M\"obius transformation so that $D_i = \tilde D_i$ for $i=1,2,3$. Here $D_i\in \calP$ is the disk corresponding to the vertex $v_i\in V$ of $X$, similarly $\tilde D_i\in \tilde\calP$. In particular, the correct M\"obius transformation is the one that sends the intersection points of the $\partial D_i$ to the corresponding ones of the $\partial \tilde D_i$.

Our next normalization is in our initial choice of $f_0$ and our labeling of the $v_i$, as per the following observation:

\begin{observation}
\label{obs:old prf}
Let $v_4$ denote the vertex of $X$ other than $v_1$ so that $\left<v_2,v_3,v_4\right>$ is a face of $X$. Then there is some choice of $f_0 = \left<v_1,v_2,v_3\right>$ so that the disks $D_4$ and $\tilde D_4$ are not equal after our normalization identifying $D_i = \tilde D_i$ for $i=1,2,3$.
\end{observation}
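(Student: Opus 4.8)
I would argue by contradiction. Suppose that for \emph{every} choice of labeled face $f_0 = \langle v_1,v_2,v_3\rangle$ of $X$, the normalization that makes $D_i = \tilde D_i$ for $i=1,2,3$ \emph{also} produces $D_4 = \tilde D_4$, where $v_4$ is the vertex opposite $v_1$ across the edge $\langle v_2,v_3\rangle$. The plan is to show that this supposition forces $\calP$ and $\tilde\calP$ to differ by a single M\"obius transformation, contradicting the standing assumption that $\calP$ and $\tilde\calP$ are inequivalent under M\"obius and anti-M\"obius transformations.

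First I would fix any face $f_0 = \langle v_1,v_2,v_3\rangle$ and apply the associated M\"obius transformation to $\calP$, so that now $D_i = \tilde D_i$ for $i=1,2,3$. The key bookkeeping observation is that the normalizing M\"obius transformation attached to a labeled face is \emph{uniquely} determined (this is the rigidity of a labeled, interiorwise disjoint, mutually tangent triple of disks, already invoked just before the Observation): hence once the three disks of \emph{any} face $f = \langle a,b,c\rangle$ already coincide with their tildes, the normalizing transformation for $f$, taken with respect to any labeling of its vertices, is the identity. Applying our supposition to $f$ with each of its three vertices taken in turn in the role of ``$v_1$'' then shows that the third disk of each of the three faces of $X$ edge-adjacent to $f$ also agrees with its tilde. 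Thus agreement of all three disks spreads from any face to every neighbouring face in the dual graph, with no further transformation applied. Letting $\calF$ be the set of faces all of whose disks currently equal their tildes, $\calF$ contains $f_0$ and is closed under passing to edge-adjacent faces; since $X$ triangulates $\bbS^2$ its dual graph is connected, so $\calF$ is the set of all faces, and hence $D_v = \tilde D_v$ for every vertex $v$ of $X$.

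Tracing back, the only modification made to $\calP$ along the way was the single M\"obius transformation from the initial step at $f_0$ (every later normalization being the identity), so that transformation carries $\calP$ onto $\tilde\calP$ --- the desired contradiction. The one step I expect to require the most care, and which is really the crux that makes the propagation trivial, is the uniqueness of the face-normalizing transformation; this follows from the standard normal form obtained by sending the three relevant tangency points to $0$, $1$, $\infty$ and reading off that the configuration is then rigid, and it is in any case already used implicitly in the sentence preceding the Observation. Beyond that, the only thing to be attentive to is getting the relabeling right when applying the supposition to each of the three edges of a face, and confirming that the dual graph of a triangulation of $\bbS^2$ is connected.
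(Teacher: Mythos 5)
Your proposal is correct and follows essentially the same route as the paper: the paper's justification is the single remark that if no such choice of $f_0$ existed, then every pair of corresponding disks would coincide after the first normalization, so $\calP$ and $\tilde\calP$ would agree, contradicting the standing assumption. Your write-up simply fills in the details of that propagation (uniqueness of the face-normalizing M\"obius transformation, spreading across edge-adjacent faces, connectedness of the dual graph of the triangulation of $\bbS^2$), all of which is sound.
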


\noindent If there were no such choice of $f_0$ then in fact every pair of corresponding disks $D_i$ and $\tilde D_i$ would coincide after our first normalization, and so $\calP$ and $\tilde \calP$ coincide.

An \emph{interstice} of the packing $\calP$ is a connected component of $\hat\bbC \setminus \calP$. Every interstice is necessarily a curvilinear triangle, because the packing's contact graph triangulates $\hat\bbC$. For our next normalization, we insist that $\infty$ lies in the interstice formed by $D_1=\tilde D_1,D_2= \tilde D_2,D_3=\tilde D_3$ which contains no other disks of either packing. Finally, we insist that $D_1=\tilde D_1,D_2=\tilde D_2,D_3=\tilde D_3$ all have Euclidean radius 1, and that $D_2$ and $D_3$ are tangent at a point lying on the horizontal axis, so that $D_1$ lies to their left. The situation for $\calP$ is depicted in Figure \ref{fig:calp}.\medskip

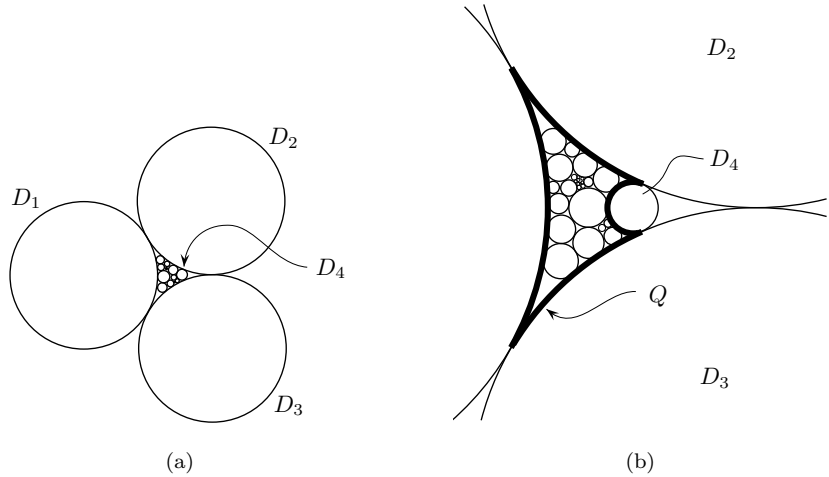
\begin{figure}
\centering
\subfloat[]
{
\label{fig:qq0}
\scalebox{0.9} 
{
\begin{pspicture}(0,-2.19)(5.52,2.21)
\pscircle[linewidth=0.02,dimen=outer](1.34,-0.01){1.1}
\pscircle[linewidth=0.02,dimen=outer](3.22,1.09){1.1}
\pscircle[linewidth=0.02,dimen=outer](3.24,-1.09){1.1}
\pscircle[linewidth=0.02,dimen=outer](2.79,0.0){0.09}
\pscircle[linewidth=0.02,dimen=outer](2.47,0.22){0.07}
\pscircle[linewidth=0.02,dimen=outer](2.5,-0.19){0.08}
\pscircle[linewidth=0.02,dimen=outer](2.52,-0.03){0.1}
\pscircle[linewidth=0.02,dimen=outer](2.62,-0.13){0.06}
\pscircle[linewidth=0.02,dimen=outer](2.48,0.11){0.06}
\pscircle[linewidth=0.02,dimen=outer](2.66,0.07){0.08}
\pscircle[linewidth=0.02,dimen=outer](2.57,0.16){0.05}
\pscircle[linewidth=0.02,dimen=outer](2.67,-0.04){0.05}
\psdots[dotsize=0.051999997](2.56,0.09)
\pscircle[linewidth=0.02,dimen=outer](2.72,-0.09){0.04}
\psdots[dotsize=0.04](2.68,-0.11)
\usefont{T1}{ptm}{m}{n}
\rput(0.49,1.125){$D_1$}
\usefont{T1}{ptm}{m}{n}
\rput(4.29,2.018){$D_2$}
\usefont{T1}{ptm}{m}{n}
\rput(4.37,-1.935){$D_3$}
\usefont{T1}{ptm}{m}{n}
\rput(4.99,0.095){$D_4$}
\psbezier[linewidth=0.01](2.82,0.12)(2.84,0.49)(3.113913,0.75)(3.5,0.73)(3.886087,0.71)(4.16,0.15)(4.64,0.11)
\psline[linewidth=0.04,linestyle=none]{<-}(2.82,0.12)(2.85,0.49)
\end{pspicture} 
}
}\qquad
\subfloat[]
{\label{fig:qq}
\scalebox{0.9} 
{
\begin{pspicture*}(7,-3.1801555)(12.527415,3.0141149)
\psarc[linewidth=0.02](11.48,4.2){4.2}{196.54298}{284.30026}
\psarc[linewidth=0.02](4.2,0.0){4.2}{310.9922}{45.0}
\psarc[linewidth=0.02](11.48,-4.2){4.2}{75.69972}{165.42578}
\pscircle[linewidth=0.02,dimen=outer](9.66,0.0){0.38}
\usefont{T1}{ptm}{m}{n}
\rput(11.01,0.705){$D_4$}
\usefont{T1}{ptm}{m}{n}
\rput(6.81,0.745){$D_1$}
\usefont{T1}{ptm}{m}{n}
\rput(10.97,2.345){$D_2$}
\usefont{T1}{ptm}{m}{n}
\rput(10.87,-2.515){$D_3$}
\pscircle[linewidth=0.02,dimen=outer](8.59,-0.79){0.27}
\pscircle[linewidth=0.02,dimen=outer](8.48,0.98){0.2}
\pscircle[linewidth=0.02,dimen=outer](8.57,0.59){0.21}
\pscircle[linewidth=0.02,dimen=outer](8.6,-0.32){0.22}
\pscircle[linewidth=0.02,dimen=outer](9.0,0.0){0.3}
\pscircle[linewidth=0.016,dimen=outer](9.2,-0.3){0.06}
\pscircle[linewidth=0.02,dimen=outer](8.71,0.29){0.13}
\pscircle[linewidth=0.02,dimen=outer](9.36,-0.42){0.14}
\pscircle[linewidth=0.02,dimen=outer](8.56,0.06){0.16}
\pscircle[linewidth=0.02,dimen=outer](8.48,0.3){0.1}
\pscircle[linewidth=0.02,dimen=outer](9.0,-0.52){0.24}
\pscircle[linewidth=0.02,dimen=outer](8.76,0.86){0.12}
\pscircle[linewidth=0.02,dimen=outer](9.26,0.42){0.2}
\pscircle[linewidth=0.02,dimen=outer](8.95,0.63){0.19}
\pscircle[linewidth=0.02,dimen=outer](9.29,-0.23){0.07}
\pscircle[linewidth=0.02,dimen=outer](9.0,0.38){0.08}
\pscircle[linewidth=0.016,dimen=outer](8.79,0.45){0.05}
\pscircle[linewidth=0.016,dimen=outer](8.92,0.32){0.04}
\pscircle[linewidth=0.016,dimen=outer](8.86,0.3){0.04}
\pscircle[linewidth=0.0139999995,dimen=outer](8.85,0.45){0.03}
\pscircle[linewidth=0.0139999995,dimen=outer](8.81,0.39){0.03}
\pscircle[linewidth=0.0139999995,dimen=outer](8.85,0.41){0.03}
\pscircle[linewidth=0.0139999995,dimen=outer](8.91,0.43){0.03}
\pscircle[linewidth=0.0139999995,dimen=outer](8.85,0.35){0.03}
\pscircle[linewidth=0.0139999995,dimen=outer](8.88,0.36){0.02}
\pscircle[linewidth=0.0139999995,dimen=outer](8.91,0.37){0.03}
\usefont{T1}{ptm}{m}{n}
\psarc[linewidth=0.09](11.48,4.2){4.2}{210.49336}{246.30931}
\psarc[linewidth=0.09](4.2,0.0){4.2}{330.50504}{29.389011}
\psarc[linewidth=0.09](11.48,-4.2){4.2}{113.7495}{148.75307}
\psarc[linewidth=0.09](9.66,0.0){0.38}{66.80141}{281.30994}
\psbezier[linewidth=0.0139999995](8.38,-1.5)(8.82,-1.92)(9.24,-1.22)(9.7,-1.24)
\psline[linewidth=0.04,linestyle=none]{<-}(8.38,-1.5)(8.92,-1.92)
\usefont{T1}{ptm}{m}{n}
\rput(10.03,-1.315){$Q$}
\psbezier[linewidth=0.0139999995](9.8,0.2)(10.18,0.42)(10.12,0.58)(10.24,0.7)(10.36,0.82)(10.5,0.74)(10.7,0.72)
\end{pspicture*} 
}
}
\caption[The packing $\calP$ after some normalizations]
{
\label{fig:calp}
{\bf The packing $\calP$ after some normalizations.}  The disks of $\calP$ all lie between $D_1,D_2,D_3$. Note that the interstice formed by $D_1, D_2, D_3$ on $\hat\bbC$ is the outside region in these figures. The disk $D_4$ is ``the first disk of $\calP\setminus \{D_1,D_2,D_3\}$ we get to if we start scanning from the right.''  In \subref{fig:qq} the topological quadrilateral $Q$ is outlined in bold.
}
\end{figure}


From now on we work in the plane $\bbC$, in the sense that $\infty \in \hat\bbC$ will not move again for the remainder of the proof. Note that every face $f$ of $F$ corresponds to some interstices $U_f\subset \bbC$ and $\tilde U_f\subset \bbC$ of $\calP$ and $\tilde \calP$ respectively, except for $f_0$, for which the interstices $U_{f_0} = \tilde U_{f_0}$ contain $\infty$. Let $Q$ be the topological quadrilateral shown in Figure \ref{fig:qq}. More precisely, let $V_Q = V\setminus \{v_1,v_2,v_3\}$, and let $F_Q = F\setminus \{f_0 = \left<v_1,v_2,v_3\right>, \left<v_2,v_3,v_4\right>\}$. Then we define $Q = \bigcup_{v\in V_Q} D_v \cup \bigcup_{f\in F_Q} U_f$. Define the analogous objects for $\tilde\calP$ in the obvious way.\medskip

We now apply one final transformation to $\calP$. First, suppose without loss of generality that the Euclidean radius of $D_4$ is larger than that of $\tilde D_4$. Then translate every disk of $\calP$ to the right by a small amount $\varepsilon>0$, leaving the disks of $\tilde\calP$ unchanged. Denote this transformation by $T_\varepsilon$. We will discuss more precise requirements on $\varepsilon$ later. The situation is depicted in Figure \ref{fig:qqmain}. The essential point is that there is an open interval of values that $\varepsilon>0$ may take so that after all of our transformations, the topological quadrilaterals $Q$ and $\tilde Q$ are arranged qualitatively as in Figure \ref{fig:qq3}. In particular, we may choose $\varepsilon$ so that the packings of $Q$ and $\tilde Q$ by the remaining disks of $\calP$ and $\tilde\calP$ are in transverse position, because there are only finitely many values of $\varepsilon>0$ for which this fails. Then our desired contradiction follows immediately from the Incompatibility Theorem \ref{incompat}, because round closed disks cannot cut one another.
\end{proof}

\begin{figure}
\centering
\subfloat[]
{\label{fig:qq2}
\scalebox{1} 
{
\begin{pspicture*}(6,-3.1801555)(12.62,3.0141149)
\psarc[linewidth=0.02](11.48,4.2){4.2}{196.54298}{284.30026}
\psarc[linewidth=0.02](4.2,0.0){4.2}{310.9922}{45.0}
\psarc[linewidth=0.02](11.48,-4.2){4.2}{75.69972}{165.42578}
\psarc[linewidth=0.03, linestyle=dashed,dash=0.1cm 0.08cm](11.48,4.2){4.2}{196.54298}{284.30026}
\psarc[linewidth=0.03, linestyle=dashed,dash=0.1cm 0.08cm](4.2,0.0){4.2}{310.9922}{45.0}
\psarc[linewidth=0.03, linestyle=dashed,dash=0.1cm 0.08cm](11.48,-4.2){4.2}{75.69972}{165.42578}
\pscircle[linewidth=0.02,dimen=outer](9.66,0.0){0.38}
\pscircle[linewidth=0.02,linestyle=dashed,dash=0.1cm 0.08cm,dimen=outer](10.0,0.0){0.26}
\psbezier[linewidth=0.01](9.46,0.24)(9.24,1.0)(9.0,1.24)(9.04,1.44)(9.08,1.64)(9.42,1.6)(9.78,1.56)
\psbezier[linewidth=0.01](10.14,-0.12)(10.14,-0.68)(10.54,-0.68)(10.56,-1.04)(10.58,-1.4)(10.5,-1.6)(9.88,-1.6)
\usefont{T1}{ptm}{m}{n}
\rput(9.55,-1.615){$\tilde D_4$}
\usefont{T1}{ptm}{m}{n}
\rput(10.21,1.525){$D_4$}
\usefont{T1}{ptm}{m}{n}
\rput(7.08,0.025){$D_1 = \tilde D_1$}
\usefont{T1}{ptm}{m}{n}
\rput(10.44,2.745){$D_2 = \tilde D_2$}
\usefont{T1}{ptm}{m}{n}
\rput(10.4,-2.575){$D_3 = \tilde D_3$}
\end{pspicture*} 
}
}\qquad\qquad
\subfloat[]
{
\label{fig:qq3}
\scalebox{1} 
{
\begin{pspicture*}(7,-2.0199099)(10.157331,2.1304433)
\psarc[linewidth=0.02,linestyle=dashed,dash=0.1cm 0.08cm](11.48,4.2){4.2}{209.87599}{251.39503}
\psarc[linewidth=0.02,linestyle=dashed,dash=0.1cm 0.08cm](4.2,0.0){4.2}{331.55707}{30.018368}
\psarc[linewidth=0.02,linestyle=dashed,dash=0.1cm 0.08cm](11.48,-4.2){4.2}{109.112976}{148.57043}
\psarc[linewidth=0.02](11.8,4.2){4.2}{209.6785}{246.19406}
\psarc[linewidth=0.02](4.52,0.0){4.2}{331.51782}{30.217274}
\psarc[linewidth=0.02](11.8,-4.2){4.2}{114.35301}{148.57043}
\psarc[linewidth=0.02](9.92,0.0){0.4}{61.38954}{293.9625}
\psarc[linewidth=0.02,linestyle=dashed,dash=0.1cm 0.08cm](10.04,0.0){0.24}{63.434948}{296.56506}
\usefont{T1}{ptm}{m}{n}
\rput(9.43,1.225){$Q$}
\usefont{T1}{ptm}{m}{n}
\rput(7.91,0.185){$\tilde Q$}
\end{pspicture*} 
}
}
\caption[The interaction between $\calP$ and $\tilde{\calP}$ before and after applying $T_\varepsilon$]
{
\label{fig:qqmain}
{\bf The interaction between $\calP$ and $\tilde\calP$ before and after applying $T_\varepsilon$.}  In \subref{fig:qq2} we see the superimposition of the $D_i$ with the $\tilde D_i$ before applying $T_\varepsilon$ to $\calP$. The disks $D_i$ are drawn solid, and the disks $\tilde D_i$ are drawn dashed. In \subref{fig:qq3} we see the relative positions of $Q$ and $\tilde Q$ after applying $T_\varepsilon$ to $\calP$.
}
\end{figure}
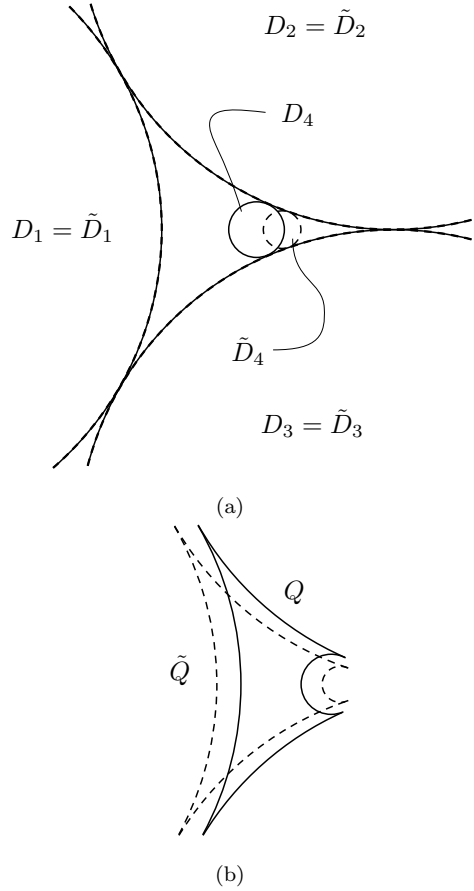

\begin{proof}[Proof of Theorem \ref{rigid in plane}]
The proof of Theorem \ref{rigid in plane} proceeds along the same lines, except that after our first round of normalizations identifying $D_i$ and $\tilde D_i$ for $i=1,2,3$, and sending a point of their common interstice to $\infty$, the remaining disks of $\calP$ accumulate around a point $z_\infty\in \bbC$, as do those of $\tilde \calP$ around a point $\tilde z_\infty\in \bbC$. The points $z_\infty$ and $\tilde z_\infty$ may coincide or may be different. We define and apply $T_\varepsilon$ as before, this time making sure that $z_\infty$ and $\tilde z_\infty$ differ after applying $T_\varepsilon$.

Next, pick small disjoint neighborhoods $W$ and $\tilde W$ of $z_\infty$ and $\tilde z_\infty$ respectively, and contained in $Q$ and $\tilde Q$ respectively. Then, let $V_L$ be the set of vertices $v\in V$ so that both $D_v\subset W$ and $\tilde D_v\subset \tilde W$. Remove vertices from $V_L$ until the sub-triangulation of $X$ having vertices $V\setminus V_L$ is a triangulation of a topological closed disk. Let $F_L$ be the set of faces of $X$ corresponding to interstices formed by disks whose vertices are in $V_L$. Let $L$ (which stands for leftovers) be the union $\bigcup_{v\in V_L} D_v \cup \bigcup_{f\in F_L} U_f\cup z_\infty$, and define $\tilde L$ similarly. Then $\{D_v\}_{v\in V\setminus V_L}$ together with $L$ form a packing of the topological quadrilateral $Q$ by closed Jordan domains, as do $\{\tilde D_v\}_{v\in V\setminus V_L}, \tilde L$ in $\tilde Q$. Furthermore, because $L$ and $\tilde L$ are disjoint by construction, these two domains do not cut each other. Then we get our desired contradiction by the Incompatibility Theorem \ref{incompat} as before.
\end{proof}

\begin{proof}[Proof of Theorem \ref{thm:no luv}]
The adaptation here is along similar lines as the adaptation to prove Theorem \ref{rigid in plane}. Suppose for contradiction that $\calP$ is locally finite in $\bbC$, and $\tilde\calP$ is locally finite in $\bbH^2 \cong \bbD$. This time, after our normalizations, the disks of $\calP$ accumulate around a single point $z_\infty$, and the disks of $\tilde\calP$ accumulate around some round circle $C$ contained in the bounded region in the plane formed between $D_1=\tilde D_1,D_2=\tilde D_1,D_3=\tilde D_3$. This time, ensure that we chose $\varepsilon$ so that $z_\infty$ does not lie on the circle $C$. We define $L$ and $\tilde L$ by throwing away disks of our circle packings, as before, but this time, either $L$ and $\tilde L$ are disjoint, or one contains the other. In either case, the two do not cut each other, and the conclusion of the proof proceeds as before.
\end{proof}

\section{Torus parametrization}
\label{chap:torus}

Before defining torus parametrization, it will be helpful to have access to the following simple lemma:

\begin{lemma}
\label{lem:gen pos lemma}
Suppose $K$ and $\tilde K$ are closed Jordan domains in transverse position. Suppose that $z\in \partial K \cap \partial \tilde K$. Orient $\partial K$ and $\partial \tilde K$ positively as usual. Then one of the following two mutually exclusive possibilities holds at the point $z$.
\begin{enumerate}
\item \label{case:lem:orientations:1} The curve $\partial \tilde K$ is entering $K$, and the curve $\partial K$ is exiting $\tilde K$.
\item \label{case:lem:orientations:2} The curve $\partial K$ is entering $\tilde K$, and the curve $\partial \tilde K$ is exiting $K$.
\end{enumerate}
Thus as we traverse $\partial K$, we alternate arriving at points of $\partial K \cap \partial \tilde K$ where (\ref{case:lem:orientations:1}) occurs and those where (\ref{case:lem:orientations:2}) occurs, and the same holds as we traverse $\partial \tilde K$.
\end{lemma}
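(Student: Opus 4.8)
The plan is to pass to the local picture at $z$ furnished by transverse position and read the dichotomy off of it. By Definition~\ref{transverse position}, choose a neighborhood $U$ of $z$ and a homeomorphism $\phi\colon U\to\bbD$ with $\phi(z)=0$, $\phi(\partial K\cap U)=\bbR\cap\bbD$, and $\phi(\partial\tilde K\cap U)=i\bbR\cap\bbD$; composing with complex conjugation if necessary (which preserves both $\bbR\cap\bbD$ and $i\bbR\cap\bbD$), we may take $\phi$ to be orientation-preserving. The set $K\cap U$ is closed in $U$ with relative interior $K^\circ\cap U$ and relative boundary $\partial K\cap U$; since $K^\circ$ is dense in the closed Jordan domain $K$ the relative interior is nonempty, and since $z\notin K^\circ$ it is not all of $U$. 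Hence $\phi(K\cap U)$ is a closed subset of $\bbD$ whose relative boundary is $\bbR\cap\bbD$, so it is exactly one of the two closed half-disks cut off by $\bbR\cap\bbD$; likewise $\phi(\tilde K\cap U)$ is one of the half-disks cut off by $i\bbR\cap\bbD$. Replacing $\phi$ by $w\mapsto-w$ (again orientation-preserving, and interchanging ``upper'' with ``lower'' and ``left'' with ``right'' simultaneously) we may assume $\phi(K^\circ\cap U)=\{\operatorname{Im}w>0\}\cap\bbD$.

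Now there are exactly two sub-cases, according to whether $\phi(\tilde K^\circ\cap U)$ is the left half-disk $\{\operatorname{Re}w<0\}\cap\bbD$ or the right half-disk $\{\operatorname{Re}w>0\}\cap\bbD$. In each, the positive orientation of $\partial K$ near $z$ is the one keeping $\phi(K^\circ\cap U)$ on the left (as $\phi$ is orientation-preserving), namely the $+x$ direction along $\bbR$; similarly the positive orientation of $\partial\tilde K$ keeps $\phi(\tilde K^\circ\cap U)$ on the left, which is the $+y$ direction along $i\bbR$ in the ``left'' sub-case and the $-y$ direction in the ``right'' sub-case. Tracing these in the model: in the ``left'' sub-case $\partial\tilde K$ passes from $\{\operatorname{Im}w<0\}$ (outside $K$) to $\{\operatorname{Im}w>0\}$ (inside $K$) while $\partial K$ passes from $\{\operatorname{Re}w<0\}$ (inside $\tilde K$) to $\{\operatorname{Re}w>0\}$ (outside $\tilde K$), which is exactly possibility~(\ref{case:lem:orientations:1}); in the ``right'' sub-case one gets possibility~(\ref{case:lem:orientations:2}) in the same way. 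The two possibilities are mutually exclusive because a transverse crossing of $\partial\tilde K$ through $\partial K$ either enters or exits $K$ but not both (and likewise for $\partial K$ and $\tilde K$). We record the bookkeeping this yields: at any point of $\partial K\cap\partial\tilde K$, possibility~(\ref{case:lem:orientations:1}) holds if and only if $\partial\tilde K$ enters $K$ there, equivalently if and only if $\partial K$ exits $\tilde K$ there.

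For the final ``alternation'' statement, recall that by the Jordan Curve Theorem $\bbC\setminus\partial\tilde K$ has exactly two connected components, $\tilde K^\circ$ and $\bbC\setminus\tilde K$. Any sub-arc of $\partial K$ whose endpoints are consecutive points of $\partial K\cap\partial\tilde K$ has relative interior disjoint from $\partial\tilde K$, hence lies entirely in one of these two components; therefore $\partial K$ alternately exits and enters $\tilde K$ as it passes through successive points of $\partial K\cap\partial\tilde K$, which by the recorded bookkeeping is precisely the alternation between possibilities~(\ref{case:lem:orientations:1}) and~(\ref{case:lem:orientations:2}). Running the symmetric argument with the roles of $K$ and $\tilde K$ exchanged, using that $\bbC\setminus\partial K$ has components $K^\circ$ and $\bbC\setminus K$, gives the same alternation as we traverse $\partial\tilde K$. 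The only part requiring genuine care is the orientation bookkeeping in the second paragraph: one must keep the local chart compatible with the orientation of the plane (hence the conjugation step) and apply the ``interior on the left'' convention correctly, since it is exactly this that rules out the mismatched configurations in which both curves enter, or both exit, at $z$.
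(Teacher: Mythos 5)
Your proof is correct and follows essentially the same route as the paper: pass to the local transverse-crossing model at $z$, use the ``interior on the left'' convention, and do the two-case analysis to get the dichotomy. You additionally spell out details the paper leaves implicit (the orientation-preserving normalization of the chart, the half-disk identification, and the Jordan-Curve-Theorem argument for the alternation statement), which is fine but not a different method.
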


\begin{proof}
Let $z\in \partial K \cap \partial \tilde K$. We may assume, by applying a homeomorphism, that locally near $z$ the picture looks like Figure \ref{fig:in out in out}, with $\partial K$ oriented down-to-up as shown. Then $K$ lies to the left of $\partial K$. Now, certainly $\partial \tilde K$ is either entering or exiting $K$ at $z$. Suppose $\partial \tilde K$ is entering $K$ at $z$. Then $\partial \tilde K$ is oriented right-to-left, and so $\tilde K$ is below $\partial \tilde K$. Thus $\partial K$ is exiting $\tilde K$, and case (1) occurs. Similarly, if $\partial \tilde K$ is exiting $K$ at $z$ then $\partial K$ is entering $\tilde K$ at $z$, so case (2) occurs.
\end{proof}

\begin{figure}
\centering
\scalebox{1} 
{
\begin{pspicture}(0,-1.7067188)(5.8790627,1.7267188)
\pscustom[linewidth=0.02]
{
\newpath
\moveto(2.0,1.5132812)
\lineto(1.95,1.2132813)
\curveto(1.925,1.0632813)(2.0,0.7132813)(2.1,0.5132812)
\curveto(2.2,0.31328124)(2.275,-0.03671875)(2.25,-0.18671875)
\curveto(2.225,-0.33671874)(2.175,-0.6117188)(2.15,-0.7367188)
\curveto(2.125,-0.8617188)(2.15,-1.1117188)(2.2,-1.2367188)
\curveto(2.25,-1.3617188)(2.275,-1.5367187)(2.2,-1.6867187)
}
\psline[linestyle=none,linewidth=0.06]{<-}(1.925,1.0632813)(2.1,0.5132812)
\pscustom[linewidth=0.02,linestyle=dashed,dash=0.16cm 0.16cm]
{
\newpath
\moveto(0.0,-0.18671875)
\lineto(0.35,-0.13671875)
\curveto(0.525,-0.11171875)(0.95,-0.16171876)(1.2,-0.23671874)
\curveto(1.45,-0.31171876)(1.875,-0.33671874)(2.05,-0.28671876)
\curveto(2.225,-0.23671874)(2.575,-0.18671875)(2.75,-0.18671875)
\curveto(2.925,-0.18671875)(3.3,-0.16171876)(3.5,-0.13671875)
\curveto(3.7,-0.11171875)(4.025,-0.08671875)(4.4,-0.08671875)
}
\usefont{T1}{ptm}{m}{n}
\rput(2.3945312,1.5232812){$\partial K$}
\usefont{T1}{ptm}{m}{n}
\rput(4.304531,-0.37671876){$\partial \tilde K$}
\psdots[dotsize=0.17](2.24,-0.24671875)
\usefont{T1}{ptm}{m}{n}
\rput(2.4845312,0.02328125){$z$}
\end{pspicture} 
}

\caption[A meeting point between two Jordan curves in transverse position]
{
\label{fig:in out in out}
{\bf A meeting point between two Jordan curves in transverse position.}
The orientation shown on $\partial K$ implies that $K$ lies to the left.
Depending on the orientation chosen for $\partial \tilde K$
we will get that $\tilde K$ lies above $\partial \tilde K$ or below it.}
\end{figure}
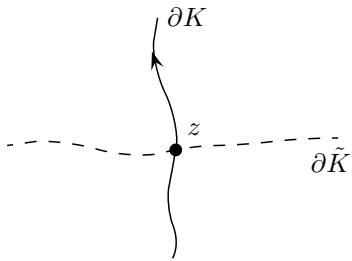

We are now ready to define \emph{torus parametrization}.
Throughout the definition, refer to Figure \ref{tpex1} for an example.

\begin{definition}
Let $K$ and $\tilde K$ be closed Jordan domains in transverse position,
so that $\partial K$
and $\partial \tilde K$
meet at $2M\ge 0$ points,
with boundaries oriented as usual.
Let
$\partial K \cap \partial \tilde K
= \{P_1,\allowbreak \ldots,\allowbreak P_M,\allowbreak \tilde P_1,\allowbreak \ldots,\allowbreak \tilde P_M\}$,
where $P_i$ and $\tilde P_i$
are labeled so that at every $P_i$
we have that $\partial K$
is entering $\tilde K$,
and at every $\tilde P_i$ we have that $\partial \tilde K$
is entering $K$.
Imbue $\bbS^1$
with an orientation and let $\kappa:\partial K\to \bbS^1$
and $\tilde \kappa : \partial\tilde K \to \bbS^1$
be orientation-preserving homeomorphisms.
We refer to this as fixing a \emph{torus parametrization} for $K$ and $\tilde K$.

We consider a point $(x,\tilde x)$ on the 2-torus
$\bbT = \bbS^1\times \bbS^1$ to be parametrizing
simultaneously a point $\kappa\inv(x) \in \partial K$
and a point $\tilde\kappa\inv(\tilde x) \in \partial \tilde K$.
We denote by
$p_i\in \bbT$ the unique point $(x,\tilde x)\in \bbT$
satisfying $\kappa\inv(x) = \tilde\kappa\inv(\tilde x) = P_i$,
similarly $\tilde p_i\in \bbT$.
Note that by the transverse position hypothesis
no pair of points in $\{p_1,\allowbreak \ldots,\allowbreak p_M,\allowbreak \tilde p_1,\allowbreak \ldots,\allowbreak \tilde p_M\}$
share a first coordinate, nor a second coordinate.

Suppose we pick $(x_0,\tilde x_0)\in \bbS^1\times \bbS^1$.
Then we may draw an image of $\bbT = \bbS^1\times \bbS^1$
by letting $\{x_0\}\times \bbS^1$
be the vertical axis and letting $\bbS^1 \times \{\tilde x_0\}$
be the horizontal axis. Then we call $(x_0,\tilde x_0)$
a \emph{base point} for the drawing.
\end{definition}

\begin{figure}
\centering
\subfloat
{
\scalebox{1} 
{
\begin{pspicture}(0,-2.4092188)(4.8290625,2.4092188)
\pscustom[linewidth=0.02]
{
\newpath
\moveto(0.11,0.03578125)
\lineto(0.26,-0.31421876)
\curveto(0.335,-0.48921874)(0.46,-0.7892187)(0.51,-0.9142187)
\curveto(0.56,-1.0392188)(0.735,-1.2892188)(0.86,-1.4142188)
\curveto(0.985,-1.5392188)(1.235,-1.7142187)(1.36,-1.7642188)
\curveto(1.485,-1.8142188)(1.71,-1.8892188)(1.81,-1.9142188)
\curveto(1.91,-1.9392188)(2.135,-1.9642187)(2.26,-1.9642187)
\curveto(2.385,-1.9642187)(2.585,-1.9142188)(2.66,-1.8642187)
\curveto(2.735,-1.8142188)(2.835,-1.6642188)(2.86,-1.5642188)
\curveto(2.885,-1.4642187)(2.91,-1.2392187)(2.91,-1.1142187)
\curveto(2.91,-0.9892188)(2.785,-0.7392188)(2.66,-0.6142188)
\curveto(2.535,-0.48921874)(2.26,-0.21421875)(2.11,-0.06421875)
\curveto(1.96,0.08578125)(1.86,0.31078124)(1.91,0.38578126)
\curveto(1.96,0.46078125)(2.135,0.56078124)(2.26,0.5857813)
\curveto(2.385,0.61078125)(2.61,0.7107813)(2.71,0.78578126)
\curveto(2.81,0.86078125)(2.985,1.0857812)(3.06,1.2357812)
\curveto(3.135,1.3857813)(3.16,1.6107812)(3.11,1.6857812)
\curveto(3.06,1.7607813)(2.91,1.8607812)(2.81,1.8857813)
\curveto(2.71,1.9107813)(2.485,1.9607812)(2.36,1.9857812)
\curveto(2.235,2.0107813)(1.985,2.0357811)(1.86,2.0357811)
\curveto(1.735,2.0357811)(1.51,2.0107813)(1.41,1.9857812)
\curveto(1.31,1.9607812)(1.11,1.9107813)(1.01,1.8857813)
\curveto(0.91,1.8607812)(0.71,1.7357812)(0.61,1.6357813)
\curveto(0.51,1.5357813)(0.36,1.3607812)(0.31,1.2857813)
\curveto(0.26,1.2107812)(0.16,1.0357813)(0.11,0.93578124)
\curveto(0.06,0.8357813)(0.01,0.61078125)(0.01,0.48578125)
\curveto(0.01,0.36078125)(0.035,0.18578126)(0.11,0.03578125)
}
\psline[linestyle=none,linewidth=0.06]{->}(0.335,-0.48921874)(0.51,-0.9142187)
\pscustom[linewidth=0.02,linestyle=dashed,dash=0.16cm 0.16cm]
{
\newpath
\moveto(4.11,0.23578125)
\lineto(4.11,0.48578125)
\curveto(4.11,0.61078125)(4.06,0.86078125)(4.01,0.98578125)
\curveto(3.96,1.1107812)(3.86,1.3107812)(3.81,1.3857813)
\curveto(3.76,1.4607812)(3.66,1.6107812)(3.61,1.6857812)
\curveto(3.56,1.7607813)(3.385,1.8607812)(3.26,1.8857813)
\curveto(3.135,1.9107813)(2.91,1.9357812)(2.81,1.9357812)
\curveto(2.71,1.9357812)(2.51,1.7857813)(2.41,1.6357813)
\curveto(2.31,1.4857812)(2.235,1.2107812)(2.26,1.0857812)
\curveto(2.285,0.9607813)(2.335,0.73578125)(2.36,0.6357812)
\curveto(2.385,0.53578126)(2.435,0.33578125)(2.46,0.23578125)
\curveto(2.485,0.13578124)(2.51,-0.08921875)(2.51,-0.21421875)
\curveto(2.51,-0.33921874)(2.435,-0.63921875)(2.36,-0.81421876)
\curveto(2.285,-0.9892188)(2.235,-1.2642188)(2.26,-1.3642187)
\curveto(2.285,-1.4642187)(2.385,-1.6142187)(2.46,-1.6642188)
\curveto(2.535,-1.7142187)(2.685,-1.8142188)(2.76,-1.8642187)
\curveto(2.835,-1.9142188)(3.035,-1.9642187)(3.16,-1.9642187)
\curveto(3.285,-1.9642187)(3.51,-1.9142188)(3.61,-1.8642187)
\curveto(3.71,-1.8142188)(3.86,-1.6892188)(3.91,-1.6142187)
\curveto(3.96,-1.5392188)(4.01,-1.3392187)(4.01,-1.2142187)
\curveto(4.01,-1.0892187)(4.035,-0.83921874)(4.06,-0.71421874)
\curveto(4.085,-0.58921874)(4.11,-0.33921874)(4.11,-0.21421875)
\curveto(4.11,-0.08921875)(4.11,0.08578125)(4.11,0.23578125)
}
\psline[linestyle=none,linewidth=0.06]{->}(4.11,0.61078125)(4.01,0.98578125)
\psline[linestyle=none,linewidth=0.06]{->}(4.11,0.48578125)(4.11,0.61078125)
\usefont{T1}{ptm}{m}{n}
\rput(0.32453126,-1.2942188){$K$}
\usefont{T1}{ptm}{m}{n}
\rput(0.27453125,1.7057812){$u$}
\usefont{T1}{ptm}{m}{n}
\rput(3.9345312,1.8057812){$\tilde K$}
\usefont{T1}{ptm}{m}{n}
\rput(2.0045311,-1.5142188){$\tilde u$}
\usefont{T1}{ptm}{m}{n}
\rput(2.7345312,-2.1742187){$P_1$}
\usefont{T1}{ptm}{m}{n}
\rput(1.9745313,0.80578125){$P_2$}
\usefont{T1}{ptm}{m}{n}
\rput(2.1245313,-0.47421876){$\tilde P_1$}
\usefont{T1}{ptm}{m}{n}
\rput(2.6645312,2.2057812){$\tilde P_2$}
\psdots[dotsize=0.12](0.53,1.5557812)
\psdots[dotsize=0.12](2.29,-1.4642187)
\end{pspicture} 
}
}\qquad
\subfloat
{
\scalebox{1} 
{
\begin{pspicture}(0,-2.2775)(7.3690624,2.2375)
\psframe[linewidth=0.02,dimen=middle](7.07,2.2375)(3.07,-1.7625)
\psline[linestyle=none,linewidth=0.06]{->}(3.07,-1.7625)(6,-1.7625)
\psline[linestyle=none,linewidth=0.06]{->}(3.07,2.2375)(6,2.2375)
\psline[linestyle=none,linewidth=0.06]{->}(3.07,-1.7625)(3.07,0)
\psline[linestyle=none,linewidth=0.06]{->}(3.07,-1.7625)(3.07,0.2)
\psline[linestyle=none,linewidth=0.06]{->}(7.07,-1.7625)(7.07,0)
\psline[linestyle=none,linewidth=0.06]{->}(7.07,-1.7625)(7.07,0.2)
\usefont{T1}{ptm}{m}{n}
\rput(3.7745314,-0.9525){$p_1$}
\usefont{T1}{ptm}{m}{n}
\rput(5.2745314,0.5475){$p_2$}
\usefont{T1}{ptm}{m}{n}
\rput(4.284531,1.5475){$\tilde p_1$}
\usefont{T1}{ptm}{m}{n}
\rput(6.284531,-0.4525){$\tilde p_2$}
\psdots[dotsize=0.12](3.57,-1.2625)
\psdots[dotsize=0.12](4.57,1.7375)
\psdots[dotsize=0.12](5.57,0.7375)
\psdots[dotsize=0.12](6.57,-0.2625)
\psdots[dotsize=0.12](3.07,-1.7625)
\usefont{T1}{ptm}{m}{n}
\rput(2.9845312,-2.0525){$(\kappa(u),\tilde\kappa(\tilde u))$}
\end{pspicture} 
}
}
\caption[A pair of closed Jordan domains $K$ and $\tilde K$ and a torus parametrization for them, drawn with base point $(\kappa(u),\tilde\kappa(\tilde u))$]
{
\label{tpex1}
{\bf A pair of closed Jordan domains $K$ and $\tilde K$ and a torus parametrization for them, drawn with base point $(\kappa(u),\tilde\kappa(\tilde u))$.}  The key points to check are that as we vary the first coordinate of $\bbT$ positively starting at $u$, we arrive at $\kappa(P_1)$, $\kappa(\tilde P_1)$, $\kappa(P_2)$, and $\kappa(\tilde P_2)$ in that order, and as we vary the second coordinate of $\bbT$ positively starting at $\tilde\kappa(\tilde u)$, we arrive at $\tilde\kappa(P_1)$, $\tilde \kappa(\tilde P_2)$, $\tilde\kappa(P_2)$, and $\tilde\kappa(\tilde P_1)$ in that order.
}
\end{figure}
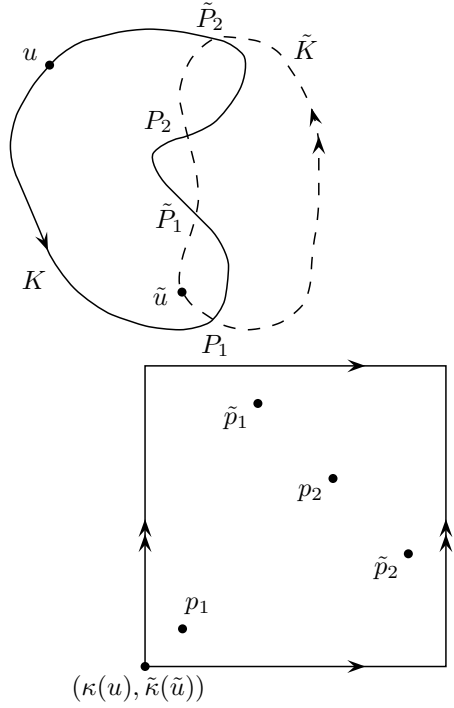

Suppose that $\phi:\partial K \to \partial \tilde K$ is an orientation-preserving homeomorphism. Then $\phi$ determines an oriented curve $\gamma$ in $\bbT$ for us, namely its graph $\gamma = \{(\kappa(z),\tilde\kappa(\phi(z))\}_{z\in \partial K}$, with orientation obtained by traversing $\partial K$ positively. Note that $\phi$ is fixed-point-free if and only if its associated curve $\gamma$ misses all of the $p_i$ and $\tilde p_i$. Pick $u\in \partial K$ and denote $\tilde u = \phi(u)$. Then if we draw the torus parametrization for $K$ and $\tilde K$ using the base point $(\kappa(u),\tilde\kappa(\tilde u))$, the curve $\gamma$ associated to $\phi$ essentially looks like the graph of a strictly increasing function from a closed interval to itself. The converse is also true: given any such $\gamma$, it determines for us an orientation-preserving homeomorphism $\partial K\to \partial \tilde K$ sending $u$ to $\tilde u$, which is fixed-point-free if and only if $\gamma$ misses all of the $p_i$ and $\tilde p_i$.

\newcommand{\Num}{\operatorname{Num}}

\newcommand{\nump}{\#p}
\newcommand{\numq}{\#\tilde p}

Suppose that $\phi(u) = \tilde u$, equivalently that $(\kappa(u),\tilde\kappa(\tilde u)) \in \gamma$. The curve $\gamma$ and the horizontal and vertical axes $\{\tilde\kappa(\tilde u)\}\times \bbS^1$ and $\bbS^1\times \{\kappa(u)\}$ divide $\bbT$ into two simply connected open sets $\Delta_\uparrow(u,\gamma)$ and $\Delta_\downarrow(u,\gamma)$ as shown in Figure \ref{fig:proving amazing index}. We suppress the dependence on $\tilde u$ in the notation because $\tilde u = \phi(u)$. If neither $u\in \partial \tilde K$ nor $\tilde u\in \partial K$ then every $p_i$ and every $\tilde p_i$ lies in either $\Delta_\downarrow(u,\gamma)$ or $\Delta_\uparrow(u,\gamma)$. In this case we write $\nump_\downarrow(u,\gamma)$ to denote $|\{p_1,\ldots,p_M\} \cap \Delta_\downarrow(u,\gamma)|$ the number of points $p_i$ which lie in $\Delta_\downarrow(u,\gamma)$, and we define $\nump_\uparrow(u,\gamma)$, $\numq_\downarrow(u,\gamma)$, and $\numq_\uparrow(u,\gamma)$ in the analogous way. Denote by $\omega(\alpha,z)$ the winding number of the closed curve $\alpha\subset \bbC$ around the point $z\not\in\alpha$.\medskip

Suppose $\gamma_0$ is any oriented closed curve in $\bbT \setminus \{p_1,\allowbreak \ldots,\allowbreak p_M,\allowbreak \tilde p_1,\allowbreak \ldots,\allowbreak \tilde p_M\}$. Then the closed curve $\{\tilde\kappa\inv(\tilde x) - \kappa\inv(x)\}_{(x,\tilde x)\in \gamma_0}$ misses the origin, and has a natural orientation obtained from that of $\gamma_0$. We denote by $w(\gamma_0)$ the winding number around the origin of $\{\tilde\kappa\inv(\tilde x) - \kappa\inv(x)\}_{(x,\tilde x)\in \gamma_0}$.

The following central lemma says that given an indexable homeomorphism $\phi: \partial K \to \partial \tilde K$, we may read off its fixed-point index $\eta(\phi)$ simply by examining the curve $\gamma$ associated to $\phi$ in the way we just described:

\begin{lemma}
\label{prop:computing index from torus}
Let $K$ and $\tilde K$ be closed Jordan domains. Fix a torus parametrization of $K$ and $\tilde K$ via $\kappa$ and $\tilde\kappa$. Let $\phi:\partial K \to \partial \tilde K$ be an indexable homeomorphism, with graph $\gamma$ in $\bbT$. Suppose that $\phi(u) = \tilde u$, where $u\not\in \partial \tilde K$ and $\tilde u\not\in \partial K$. Then:
\begin{align}
\label{eq 1:prop:computing index from torus eq}\eta(\phi)  = w(\gamma)& = \omega(\partial K, \tilde u) + \omega(\partial \tilde K, u ) - \nump_\downarrow(u,\gamma) + \numq_\downarrow(u,\gamma)\\
\label{eq 2:prop:computing index from torus eq}& = \omega(\partial K, \tilde u) + \omega(\partial \tilde K, u) + \nump_\uparrow(u,\gamma) - \numq_\uparrow(u,\gamma)
\end{align}
\end{lemma}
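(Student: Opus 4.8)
The plan is to establish \eqref{eq 1:prop:computing index from torus eq} (the remaining equalities being formal: \eqref{eq 2:prop:computing index from torus eq} follows because each $p_i$ and $\tilde p_i$ avoids $\gamma$ and both axes, so $\nump_\downarrow(u,\gamma)+\nump_\uparrow(u,\gamma)=M=\numq_\downarrow(u,\gamma)+\numq_\uparrow(u,\gamma)$) by a continuity argument inside the space $\mathcal{H}$ of orientation-preserving homeomorphisms $\partial K\to\partial\tilde K$ sending $u$ to $\tilde u$; this space is path-connected. For $\psi\in\mathcal{H}$ write $\gamma_\psi\subset\bbT$ for its graph — always a curve through the base point $(\kappa(u),\tilde\kappa(\tilde u))$ dividing $\bbT$ into two simply connected regions $\Delta_\uparrow,\Delta_\downarrow$ — and set $R(\psi)=\omega(\partial K,\tilde u)+\omega(\partial\tilde K,u)-\nump_\downarrow(u,\gamma_\psi)+\numq_\downarrow(u,\gamma_\psi)$. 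Both $\eta(\psi)$ and $R(\psi)$ are defined exactly when $\psi$ is indexable, i.e.\ when $\gamma_\psi$ misses all $p_i$ and $\tilde p_i$. I would run the argument in three parts: (i) $\eta$ and $R$ are locally constant on the indexable homeomorphisms in $\mathcal{H}$; (ii) across a generic one-parameter crossing of the non-indexable locus, $\eta$ and $R$ jump by the same integer; (iii) $\eta=R$ at one convenient $\phi_0\in\mathcal{H}$. Granting these, I would choose a generic path in $\mathcal{H}$ from $\phi$ to $\phi_0$ — perturbing an arbitrary path, which exists by path-connectedness, rel its endpoints (both already indexable) so that it meets the non-indexable locus finitely often and transversally — and conclude that $\eta-R$ is constant along it, so $\eta(\phi)=R(\phi)$.

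Part (i) is immediate: $\omega(\partial K,\tilde u)$ and $\omega(\partial\tilde K,u)$ depend only on the fixed data $K,\tilde K,u,\tilde u$; the counts $\nump_\downarrow,\numq_\downarrow$ are integers that change only when $\gamma_\psi$ meets a special point; and $\eta$ is locally constant by definition. For (iii) I would take $\phi_0$ which, parametrizing $\partial K$ and $\partial\tilde K$ positively from $u$ and $\tilde u$, runs once around all of $\partial\tilde K$ while $z$ traverses a tiny initial arc of $\partial K$ about $u$, and then stays within a tiny arc of $\partial\tilde K$ about $\tilde u$ while $z$ covers the rest of $\partial K$; such a $\phi_0$ is fixed-point-free, since on the long arc $\phi_0(z)$ stays near $\tilde u\notin\partial K$ and on the short arc $z$ stays near $u\notin\partial\tilde K$. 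Splitting $\int_{\partial K}d\arg(\phi_0(z)-z)$ over the two arcs and letting them shrink, the long-arc part tends to $\int_{\partial K}d\arg(\tilde u-z)=2\pi\,\omega(\partial K,\tilde u)$ (negation being a rotation of $\bbC\setminus\{0\}$) and the short-arc part to $2\pi\,\omega(\partial\tilde K,u)$, so $\eta(\phi_0)=\omega(\partial K,\tilde u)+\omega(\partial\tilde K,u)$; and for this $\phi_0$ the graph $\gamma_{\phi_0}$ hugs the boundary of the drawn square, so one of $\Delta_\uparrow,\Delta_\downarrow$ is a thin sliver avoiding every special point and $\{\nump_\downarrow(u,\gamma_{\phi_0}),\numq_\downarrow(u,\gamma_{\phi_0})\}$ is $\{0,0\}$ or $\{M,M\}$ — in either case $R(\phi_0)=\omega(\partial K,\tilde u)+\omega(\partial\tilde K,u)=\eta(\phi_0)$.

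Part (ii) is the crux, and I expect it to be the main obstacle because of the sign bookkeeping. Let $\psi_t$, $t\in(-\varepsilon,\varepsilon)$, be a path in $\mathcal{H}$, indexable for $t\neq0$, whose graph at $t=0$ meets exactly one special point, transversally. Put $H(z,t)=\psi_t(z)-z$ on $\partial K\times(-\varepsilon,\varepsilon)$; its zeros are the $(z,t)$ with $z\in\partial K\cap\partial\tilde K$ and $\psi_t(z)=z$, so by hypothesis there is exactly one, $(z_\ast,0)$, and it is transverse. Computing the degree of $H$ on the boundary of the cylinder $\partial K\times[-\delta,\delta]$ identifies $\eta(\psi_{-\delta})-\eta(\psi_{\delta})$ with the Brouwer index of $H$ at $(z_\ast,0)$; that index is read off from the local crossing picture at $z_\ast$ furnished by Lemma \ref{lem:gen pos lemma} — case (\ref{case:lem:orientations:2}) if $z_\ast$ is some $P_j$ (where $\partial K$ enters $\tilde K$), case (\ref{case:lem:orientations:1}) if it is some $\tilde P_j$ — and in a normalized local model one checks $\eta$ changes by $\pm 1$. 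On the other side, crossing $p_j$ moves it between $\Delta_\uparrow$ and $\Delta_\downarrow$, changing $\nump_\downarrow$ by $\pm1$ and hence $R$ by $\mp1$, while crossing $\tilde p_j$ changes $\numq_\downarrow$ by $\pm1$ and hence $R$ by $\pm1$. The hard part is to see that these match: the $180^{\circ}$ difference between the crossing geometries at $P_j$ and at $\tilde P_j$ (again Lemma \ref{lem:gen pos lemma}) reverses the sense in which $\gamma$ sweeps past these two kinds of point, and this reversal is exactly cancelled by the opposite signs of the $-\nump_\downarrow$ and $+\numq_\downarrow$ terms, so in every case the jump in $R$ equals the jump in $\eta$. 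Then $\eta-R$ takes the same value on both sides of every crossing and is locally constant in between, hence is constant along the chosen path; since it vanishes at $\phi_0$ we obtain $\eta(\phi)=R(\phi)$, as required.
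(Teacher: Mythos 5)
Your strategy is genuinely different from the paper's: you deform $\phi$ inside the space $\mathcal{H}$ of orientation-preserving homeomorphisms sending $u\mapsto\tilde u$, verify the formula on a normalized $\phi_0$, and track jumps across the non-indexable locus, whereas the paper never moves $\phi$ at all --- it writes $\partial\Delta_\downarrow(u,\gamma)$ as the concatenation of $\gamma$ reversed with the two axis circles, homotopes that boundary inside the punctured torus onto small loops $\zeta(p_i)$, $\zeta(\tilde p_i)$ around the special points of $\Delta_\downarrow$, and computes the local winding numbers $w(\zeta(p_i))=1$, $w(\zeta(\tilde p_i))=-1$ (Observation \ref{lem:local windings for pi qi}). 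Your parts (i) and (iii) are correct as sketched; in particular the normalized map $\phi_0$, the identity $\eta(\phi_0)=\omega(\partial K,\tilde u)+\omega(\partial\tilde K,u)$, and the observation that the sliver region contains no $p_i$ or $\tilde p_i$ (so that $-\nump_\downarrow+\numq_\downarrow=0$ there) all hold, and this is a nice alternative to the paper's base bookkeeping.

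The gap is exactly your part (ii), which you correctly identify as the crux and then assert rather than prove. The jump of $\eta$ across a single crossing is $\pm1$ with a sign determined by two data: whether the crossed point is a $P_j$ or a $\tilde P_j$ (i.e.\ the relative orientations of $\partial K$ and $\partial\tilde K$ at the intersection point, Lemma \ref{lem:gen pos lemma}), and the direction in which the graph sweeps past it (equivalently, into which of $\Delta_\uparrow$, $\Delta_\downarrow$ the point moves). The jump of $R$ depends on the same two data. The assertion that these agree in all four sign configurations is precisely the lemma localized at one crossing, and the sentence about the $180^{\circ}$ difference being ``exactly cancelled'' restates what must be shown; nothing in your write-up pins the sign of the Brouwer index of $(z,t)\mapsto\psi_t(z)-z$ to the side of $\gamma$ on which the special point lands. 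To close this you must fix a normalized local model near the intersection point (transverse position plus Lemma \ref{lem:gen pos lemma}) and follow the vector $\psi_t(z)-z$ through the crossing --- which is, in substance, the computation the paper carries out in Figure \ref{fig:proving local winding pi}; equivalently, note that the graphs just before and just after the crossing differ by a small loop enclosing only the crossed point, so the jump of $\eta=w(\gamma)$ is $\pm w(\zeta(\cdot))$ and you need those local winding numbers anyway. A secondary, more routine omission: the existence of a path from $\phi$ to $\phi_0$ meeting the non-indexable locus finitely often and ``transversally'' is asserted without construction, and since your maps are only continuous, transversality must be phrased topologically (a sign change of $\psi_t(z_\ast)-z_\ast$, say); taking the straight-line homotopy in lifted circle coordinates, where the fixed-point condition at each of the finitely many intersection points is affine in $t$, would make this step honest.
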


\noindent The remainder of the section is spent proving Lemma \ref{prop:computing index from torus}.\medskip

We begin with an observation:

\begin{observation}
\label{lem:homotopy classes}
If $\gamma_1$ and $\gamma_2$ are homotopic in $\bbT \setminus \{p_1,\allowbreak \ldots,\allowbreak p_M,\allowbreak \tilde p_1,\allowbreak \ldots,\allowbreak \tilde p_M\}$ then $w(\gamma_1) = w(\gamma_2)$.
\end{observation}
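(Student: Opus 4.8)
The plan is to realize $w$ as the winding number of the composition of the curve with a single fixed continuous map into $\bbC \setminus \{0\}$, and then invoke the homotopy-invariance of winding number. To this end, define $F : \bbT \setminus \{p_1,\ldots,p_M,\tilde p_1,\ldots,\tilde p_M\} \to \bbC$ by $F(x,y) = \tilde\kappa\inv(y) - \kappa\inv(x)$. Since $\kappa\inv$ and $\tilde\kappa\inv$ are continuous, $F$ is continuous. The first thing I would check is that $F$ never vanishes on its domain: if $F(x,y) = 0$ then $\kappa\inv(x) = \tilde\kappa\inv(y)$, so this common point $Q$ lies in $\partial K \cap \partial \tilde K = \{P_1,\ldots,P_M,\tilde P_1,\ldots,\tilde P_M\}$; but then, since $\kappa$ and $\tilde\kappa$ are bijections, $(x,y) = (\kappa(Q),\tilde\kappa(Q))$ is exactly one of the $p_i$ or $\tilde p_i$, contradicting the assumption that $(x,y)$ lies in the domain of $F$. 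Hence $F$ takes values in $\bbC\setminus\{0\}$.

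By definition, for any oriented closed curve $\gamma_0$ in $\bbT \setminus \{p_1,\ldots,p_M,\tilde p_1,\ldots,\tilde p_M\}$, the closed curve $\{\tilde\kappa\inv(y) - \kappa\inv(x)\}_{(x,y)\in\gamma_0}$ is precisely $F \circ \gamma_0$, and $w(\gamma_0)$ is its winding number about the origin. Now suppose $\gamma_1$ and $\gamma_2$ are homotopic in $\bbT \setminus \{p_1,\ldots,p_M,\tilde p_1,\ldots,\tilde p_M\}$, via a homotopy $H$ through closed curves. Then $F \circ H$ is a homotopy of closed curves in $\bbC \setminus \{0\}$ from $F\circ\gamma_1$ to $F\circ\gamma_2$. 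Since the winding number about the origin is invariant under free homotopy of closed curves in $\bbC\setminus\{0\}$ — it is the image of the loop in $\pi_1(\bbC\setminus\{0\})\cong\bbZ$, which is abelian, so free homotopy classes of loops are detected by this integer — we conclude $w(\gamma_1) = w(\gamma_2)$.

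There is essentially no serious obstacle here. The only point requiring care is the verification that $F$ omits the origin precisely on the deleted points $\{p_1,\ldots,p_M,\tilde p_1,\ldots,\tilde p_M\}$, which rests on the labeling of $\partial K \cap \partial \tilde K$ and on $\kappa,\tilde\kappa$ being bijections, so that a point of the torus encodes a coincident pair $\kappa\inv(x) = \tilde\kappa\inv(y)$ exactly when it is one of these $2M$ points. Once that is in hand, the statement follows from the standard homotopy-invariance of winding number — the same principle already used at the end of the proof of Lemma \ref{prop:index invariance}.
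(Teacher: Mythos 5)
Your proof is correct and follows essentially the same route as the paper: the paper likewise observes that the given homotopy induces, via the map $(x,y)\mapsto \tilde\kappa\inv(y)-\kappa\inv(x)$, a homotopy of the associated closed curves in $\bbC\setminus\{0\}$, and then appeals to homotopy-invariance of the winding number. Your explicit check that this map is nonvanishing away from the $p_i$ and $\tilde p_i$ is a nice bit of added care, but it is the same argument.
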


\noindent This is because the homotopy between $\gamma_1$ and $\gamma_2$ in $\bbT\setminus \{p_1,\allowbreak \ldots,\allowbreak p_M,\allowbreak \tilde p_1,\allowbreak \ldots,\allowbreak \tilde p_M\}$ induces a homotopy between the closed curves $\{\tilde\kappa\inv(\tilde x) - \kappa\inv(x)\}_{(x,\tilde x)\in \gamma_1}$ and $\{\tilde\kappa\inv(\tilde x) - \kappa\inv(x)\}_{(x,\tilde x)\in \gamma_2}$ in the punctured plane $\bbC\setminus \{0\}$.

Suppose that $\phi:\partial K\to \partial \tilde K$ is a fixed-point-free orientation-preserving homeomorphism. Let $\gamma$ be its graph in $\bbT$. If $\gamma$ has orientation induced by traversing $\partial K$ and $\partial \tilde K$ positively, then the following is a tautology.

\begin{observation}
\label{ex:never gonna reference this lol}
$\eta(\phi) = w(\gamma)$
\end{observation}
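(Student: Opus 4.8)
The plan is to unwind definitions; as the author indicates, the statement is a tautology once the parametrizations are tracked carefully, so the "proof" amounts to a single substitution together with a check that orientations and the missing-origin condition line up.

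First I would recall what each side means. By definition, $\eta(\phi)$ is the winding number around the origin of the closed curve $\{\phi(z) - z\}_{z\in \partial K}$, oriented by traversing $\partial K$ positively; this curve avoids the origin because $\phi$ is fixed-point-free. On the other side, the graph of $\phi$ in $\bbT = \bbS^1\times \bbS^1$ is $\gamma = \{(\kappa(z),\tilde\kappa(\phi(z)))\}_{z\in \partial K}$, again oriented by traversing $\partial K$ positively. Since $\phi$ is fixed-point-free, $\gamma$ misses all of the $p_i$ and $\tilde p_i$, so $w(\gamma)$ is defined, namely as the winding number around the origin of the closed curve $\{\tilde\kappa\inv(y) - \kappa\inv(x)\}_{(x,y)\in \gamma}$.

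The one substantive step is the substitution $(x,y) = (\kappa(z),\tilde\kappa(\phi(z)))$ into the expression $\tilde\kappa\inv(y) - \kappa\inv(x)$. Since $\kappa$ and $\tilde\kappa$ are homeomorphisms, $\kappa\inv(\kappa(z)) = z$ and $\tilde\kappa\inv(\tilde\kappa(\phi(z))) = \phi(z)$, so as a parametrized oriented curve $\{\tilde\kappa\inv(y) - \kappa\inv(x)\}_{(x,y)\in \gamma}$ is literally identical to $\{\phi(z) - z\}_{z\in \partial K}$: both are parametrized by $z\in \partial K$, with the same orientation (positive traversal of $\partial K$ in both cases), and pointwise the two expressions agree. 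Two identical closed curves in $\bbC\setminus\{0\}$ have the same winding number around the origin, so $\eta(\phi) = w(\gamma)$.

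I do not expect any genuine obstacle here: the only things requiring (routine) care are that the orientations on $\gamma$ and on $\{\phi(z)-z\}$ are induced by the same positive traversal of $\partial K$, and that both curves stay off the origin, which is exactly the content of $\phi$ being fixed-point-free (equivalently, of $\gamma$ avoiding the $p_i$ and $\tilde p_i$). Thus the "hard part" is nothing more than bookkeeping of the parametrizations, and a one- or two-line proof suffices.
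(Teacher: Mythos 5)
Your proposal is correct and is exactly the paper's view of this statement: the paper records it as a tautology, and your substitution $(x,y)=(\kappa(z),\tilde\kappa(\phi(z)))$, which identifies $\{\tilde\kappa\inv(y)-\kappa\inv(x)\}_{(x,y)\in\gamma}$ with $\{\phi(z)-z\}_{z\in\partial K}$ as oriented curves missing the origin, is precisely the bookkeeping that makes it so. Nothing further is needed.
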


\begin{figure}
\centering

\scalebox{1} 
{
\begin{pspicture}(0,-3.2275)(9.279062,3.1875)
\psframe[linewidth=0.02,dimen=middle](5.82,3.1875)(0.0,-2.6325)

\psdots[dotsize=0.12](0.0,-2.6325)
\rput(0.0,-3.0025){$(\kappa(u),\tilde\kappa(\tilde u))$}


\pscustom[linewidth=0.02,linestyle=dashed,dash=0.16cm 0.16cm]
{
\newpath
\moveto(0.0,-2.6325)
\lineto(0.08,-2.4725)
\curveto(0.11,-2.4225)(0.165,-2.3375)(0.19,-2.3025)
\curveto(0.215,-2.2675)(0.28,-2.1925)(0.32,-2.1525)
\curveto(0.36,-2.1125)(0.445,-2.0425)(0.49,-2.0125)
\curveto(0.535,-1.9825)(0.65,-1.8825)(0.72,-1.8125)
\curveto(0.79,-1.7425)(0.895,-1.6275)(0.93,-1.5825)
\curveto(0.965,-1.5375)(1.035,-1.4325)(1.07,-1.3725)
\curveto(1.105,-1.3125)(1.19,-1.1925)(1.24,-1.1325)
\curveto(1.29,-1.0725)(1.38,-0.9675)(1.42,-0.9225)
\curveto(1.46,-0.8775)(1.565,-0.7775)(1.63,-0.7225)
\curveto(1.695,-0.6675)(1.825,-0.5675)(1.89,-0.5225)
\curveto(1.955,-0.4775)(2.075,-0.4025)(2.13,-0.3725)
\curveto(2.185,-0.3425)(2.305,-0.2875)(2.37,-0.2625)
\curveto(2.435,-0.2375)(2.56,-0.1925)(2.62,-0.1725)
\curveto(2.68,-0.1525)(2.8,-0.0975)(2.86,-0.0625)
\curveto(2.92,-0.0275)(3.035,0.0525)(3.09,0.0975)
\curveto(3.145,0.1425)(3.235,0.2325)(3.27,0.2775)
\curveto(3.305,0.3225)(3.37,0.4125)(3.4,0.4575)
\curveto(3.43,0.5025)(3.48,0.5875)(3.5,0.6275)
\curveto(3.52,0.6675)(3.555,0.7625)(3.57,0.8175)
\curveto(3.585,0.8725)(3.62,0.9775)(3.64,1.0275)
\curveto(3.66,1.0775)(3.7,1.1625)(3.72,1.1975)
\curveto(3.74,1.2325)(3.795,1.3075)(3.83,1.3475)
\curveto(3.865,1.3875)(3.945,1.4675)(3.99,1.5075)
\curveto(4.035,1.5475)(4.125,1.6075)(4.17,1.6275)
\curveto(4.215,1.6475)(4.325,1.6925)(4.39,1.7175)
\curveto(4.455,1.7425)(4.59,1.7825)(4.66,1.7975)
\curveto(4.73,1.8125)(4.84,1.8425)(4.88,1.8575)
\curveto(4.92,1.8725)(5.015,1.9175)(5.07,1.9475)
\curveto(5.125,1.9775)(5.215,2.0425)(5.25,2.0775)
\curveto(5.285,2.1125)(5.35,2.1925)(5.38,2.2375)
\curveto(5.41,2.2825)(5.47,2.3875)(5.5,2.4475)
\curveto(5.53,2.5075)(5.57,2.6025)(5.58,2.6375)
\curveto(5.59,2.6725)(5.61,2.7375)(5.62,2.7675)
\curveto(5.63,2.7975)(5.655,2.8475)(5.67,2.8675)
\curveto(5.685,2.8875)(5.705,2.9275)(5.71,2.9475)
\curveto(5.715,2.9675)(5.73,3.0025)(5.74,3.0175)
\curveto(5.75,3.0325)(5.765,3.0625)(5.82,3.1875)
}
\usefont{T1}{ptm}{m}{n}
\rput(4.0445313,2.1175){$\Delta_\uparrow(u,\gamma)$}
\usefont{T1}{ptm}{m}{n}
\rput(4.7945313,1.2975){$\Delta_\downarrow(u,\gamma)$}
\usefont{T1}{ptm}{m}{n}
\rput(1.3845313,-0.3825){$\gamma$}

\psline[linewidth=0.02cm,linestyle=dotted,dotsep=0.16cm,arrowsize=0.05291667cm 2.0,arrowlength=1.4,arrowinset=0.4]{->}(5.7,0.8475)(5.0,0.0675)

\psline[linewidth=0.1cm,linestyle=none,arrows=<-](2.68,-0.1525)(2.86,-0.0625)

\psline[linewidth=0.06cm,linestyle=dashed,dash=0.16cm 0.16cm](0.0,-2.6325)(5.82,-2.6325)
\psline[linewidth=0.1cm,linestyle=none,arrows=->](0.0,-2.6325)(1.82,-2.6325)

\psline[linewidth=0.06cm,linestyle=dashed,dash=0.16cm 0.16cm](5.82,-2.6325)(5.82,3.1875)
\psline[linewidth=0.1cm,linestyle=none,arrows=->](5.82,-2.6325)(5.82,0.1875)

\psdots[dotsize=0.12](0.84,0.9475)
\psdots[dotsize=0.12](2.18,2.0075)
\psdots[dotsize=0.12](2.64,-1.8325)
\psdots[dotsize=0.12](4.84,-0.4925)
\usefont{T1}{ptm}{m}{n}
\rput(3.2245312,-3.0025){$\bbS^1\times \{\tilde\kappa(\tilde u)\}$}
\usefont{T1}{ptm}{m}{n}
\rput(7.044531,0.5775){$\{\kappa(u)\}\times \bbS^1$}
\usefont{T1}{ptm}{m}{n}
\usefont{T1}{ptm}{m}{n}
\rput(0.8045313,1.3175){$p_1$}
\usefont{T1}{ptm}{m}{n}
\rput(2.1645312,2.3975){$\tilde p_1$}
\usefont{T1}{ptm}{m}{n}
\rput(2.9245312,-1.8025){$p_2$}
\usefont{T1}{ptm}{m}{n}
\rput(4.5645313,-0.4825){$\tilde p_2$}

\psframe[linewidth=0.06,linestyle=dashed,dash=0.16cm 0.16cm,dimen=middle](5.02,-0.0525)(4.26,-0.8125)

\psline[linewidth=0.1cm,linestyle=none,arrows=->](5.02,-0.0525)(4.46,-0.0525)

\psframe[linewidth=0.06,linestyle=dashed,dash=0.16cm 0.16cm,dimen=middle](3.18,-1.3925)(2.38,-2.1925)

\psline[linewidth=0.1cm,linestyle=none,arrows=<-](2.98,-2.1925)(2.38,-2.1925)

\psline[linewidth=0.06cm,linestyle=dashed,dash=0.16cm 0.16cm](3.18,-1.3925)(4.26,-0.8125)

\psline[linewidth=0.03cm,linestyle=solid,arrows=->](3.38,-1.3925)(4.16,-0.9625)

\psline[linewidth=0.03cm,linestyle=solid,arrows=<-](3.28,-1.2425)(4.06,-0.8125)

\psline[linewidth=0.02cm,linestyle=dotted,dotsep=0.16cm,arrowsize=0.01291667cm 2.0,arrowlength=1.4,arrowinset=0.4]{->}(0.8,-2.3325)(2.08,-2.1125)
\psline[linewidth=0.02cm,linestyle=dotted,dotsep=0.16cm,arrowsize=0.01291667cm 2.0,arrowlength=1.4,arrowinset=0.4]{->}(2.66,-0.5925)(2.98,-1.1925)
\psline[linewidth=0.02cm,linestyle=dotted,dotsep=0.16cm,arrowsize=0.01291667cm 2.0,arrowlength=1.4,arrowinset=0.4]{->}(3.74,0.6675)(4.22,0.0475)
\psline[linewidth=0.02cm,linestyle=dotted,dotsep=0.16cm,arrowsize=0.01291667cm 2.0,arrowlength=1.4,arrowinset=0.4]{->}(5.58,-1.1525)(5.16,-0.8525)
\psline[linewidth=0.02cm,linestyle=dotted,dotsep=0.16cm,arrowsize=0.01291667cm 2.0,arrowlength=1.4,arrowinset=0.4]{->}(4.26,-2.4125)(3.72,-1.4125)
\psline[linewidth=0.02cm,linestyle=dotted,dotsep=0.16cm,arrowsize=0.01291667cm 2.0,arrowlength=1.4,arrowinset=0.4]{->}(3.34,-2.5125)(3.24,-2.1925)
\psline[linewidth=0.02cm,linestyle=dotted,dotsep=0.16cm,arrowsize=0.01291667cm 2.0,arrowlength=1.4,arrowinset=0.4]{->}(1.54,-1.3325)(2.3,-1.4325)
\psline[linewidth=0.02cm,linestyle=dotted,dotsep=0.16cm,arrowsize=0.01291667cm 2.0,arrowlength=1.4,arrowinset=0.4]{->}(5.58,-2.4525)(4.38,-1.1325)
\psline[linewidth=0.02cm,linestyle=dotted,dotsep=0.16cm,arrowsize=0.01291667cm 2.0,arrowlength=1.4,arrowinset=0.4]{->}(3.16,0.1275)(3.84,-0.8725)
\end{pspicture} 
}

\caption[A homotopy from $\partial\Delta_\downarrow(u,\gamma)$ to $\Gamma$]
{
\label{fig:proving amazing index}
{\bf A homotopy from $\partial\Delta_\downarrow(u,\gamma)$ to $\Gamma$.}  Here the orientation shown on $\gamma$ is the opposite of the orientation induced by traversing $\partial K$ positively.
}
\end{figure}
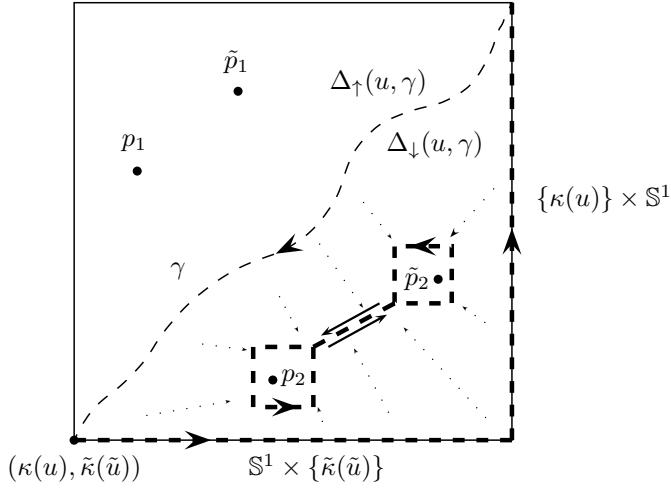

Orient $\partial \Delta_\downarrow(u,\gamma)$ as shown in Figure \ref{fig:proving amazing index}. Then $\partial \Delta_\downarrow(u,\gamma)$ is the concatenation of the curve $\gamma$ traversed backwards with $\bbS^1\times \{\tilde\kappa(\tilde u)\}$ and $\{\kappa(u)\}\times \bbS^1$, where the two latter curves are oriented according to the positive orientation on $\bbS^1$.

\begin{observation}
If $\bbS^1\times \{\tilde\kappa(\tilde u)\}$ and $\{\kappa(u)\}\times \bbS^1$ are oriented according to the positive orientation on $\bbS^1$, then $w(\bbS^1\times \{\tilde\kappa(\tilde u)\}) = \omega(\partial K, \tilde u)$ and $w(\{\kappa(u)\}\times \bbS^1) = \omega(\partial \tilde K, u)$.
\end{observation}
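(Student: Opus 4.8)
The plan is simply to unwind the definition of $w(\cdot)$ in each of the two cases, and then recognize the resulting planar loop. For the horizontal axis $\bbS^1\times\{\tilde\kappa(\tilde u)\}$, oriented positively in the first coordinate, I would note that a point $(x,y)$ on this curve has $y=\tilde\kappa(\tilde u)$ constant, so $\tilde\kappa\inv(y)=\tilde u$ is constant, while $\kappa\inv(x)$ ranges once over $\partial K$ in the positive direction (since $\kappa$ is orientation-preserving and we traverse $\bbS^1$ positively). Thus, by the definition of $w$, the quantity $w(\bbS^1\times\{\tilde\kappa(\tilde u)\})$ is the winding number about $0$ of the planar loop $\{\tilde u - z\}_{z\in\partial K}$; this loop indeed avoids $0$ because $\tilde u\notin\partial K$ by hypothesis.

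Next I would observe that $\{\tilde u - z\}_{z\in\partial K}$ is obtained from $\{z-\tilde u\}_{z\in\partial K}$ by postcomposing with the map $\zeta\mapsto-\zeta$, which is a rotation of $\bbC\setminus\{0\}$ about the origin and hence preserves winding numbers about $0$. Therefore $w(\bbS^1\times\{\tilde\kappa(\tilde u)\})$ equals the winding number of $\{z-\tilde u\}_{z\in\partial K}$ about $0$, which is precisely $\omega(\partial K,\tilde u)$. For the vertical axis $\{\kappa(u)\}\times\bbS^1$, oriented positively in the second coordinate, the same bookkeeping applies with the roles of the two factors reversed: now $\kappa\inv(x)=u$ is constant while $\tilde\kappa\inv(y)$ traverses $\partial\tilde K$ once positively, so $w(\{\kappa(u)\}\times\bbS^1)$ is the winding number about $0$ of $\{w-u\}_{w\in\partial\tilde K}$, which is literally $\omega(\partial\tilde K,u)$, with no sign to reconcile (and well-defined since $u\notin\partial\tilde K$).

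The only step here that is not a pure substitution into definitions is the sign check in the first case, namely that passing from $z-\tilde u$ to $\tilde u - z$ does not flip the winding number about the origin; this is exactly the elementary fact that $\zeta\mapsto-\zeta$ is isotopic to the identity on $\bbC\setminus\{0\}$ (equivalently, writing $-1=e^{i\pi}$ shifts the argument by the constant $\pi$ and so leaves its total variation unchanged). I do not anticipate any genuine obstacle, as the whole observation amounts to tracing through the constructions introduced earlier in this section.
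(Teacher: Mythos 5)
Your argument is correct and is exactly the definitional unwinding the paper treats as immediate (the Observation is stated there without proof): the horizontal curve yields the planar loop $\{\tilde u - z\}_{z\in\partial K}$, whose winding about $0$ agrees with that of $\{z-\tilde u\}_{z\in\partial K}$ since $\zeta\mapsto-\zeta$ shifts the argument by the constant $\pi$, and the vertical curve gives $\{w-u\}_{w\in\partial\tilde K}$ directly. The sign check and the appeal to $u\notin\partial\tilde K$, $\tilde u\notin\partial K$ are precisely the points worth recording, and you have them right.
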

\noindent It is also easy to see that if we concatenate two closed curves $\gamma_1$ and $\gamma_2$ that meet at a point, we get $w(\gamma_1\circ \gamma_2) = w(\gamma_1) + w(\gamma_2)$. Thus in light of the orientations on $\partial \Delta_\downarrow(u,\gamma)$ and all other curves concerned we get:
\begin{align*}
\label{prop:computing index from torus eq}
w(\partial \Delta_\downarrow(u,\gamma))& = w(\bbS^1\times \{\tilde\kappa(\tilde u)\}) + w(\{\kappa(u)\}\times \bbS^1) - w(\gamma) \\
& = \omega(\partial K,\tilde u) +  \omega(\partial \tilde K, u) - \eta(\phi)
\end{align*}

For every $i$ let $\zeta(p_i)$ and $\zeta(\tilde p_i)$ be small squares around $p_i$ and $\tilde p_i$ respectively in $\bbT$, oriented as shown in Figure \ref{fig:proving amazing index}. By \emph{square} we mean a simple closed curve which decomposes into four ``sides,'' so that on a given side one of the two coordinates of $\bbS^1\times \bbS^1 = \bbT$ is constant. Pick the squares small enough so that the closed boxes they bound are pairwise disjoint and do not meet $\partial \Delta_\downarrow(u,\gamma)$.

Let $\Gamma$ be the closed curve in $\Delta_\downarrow(u,\gamma)$ obtained in the following way. First, start with every loop $\zeta(p_i)$ and $\zeta(\tilde p_j)$ for those $p_i$ and $\tilde p_j$ lying in $\Delta_\downarrow(u,\gamma)$. Let $\delta_0$ be an arc contained in the interior of $\Delta_\downarrow(u,\gamma)$ which meets each $\zeta(p_i)$ and $\zeta(\tilde p_j)$ contained in $\Delta_\downarrow(u,\gamma)$ at exactly one point. It is easy to prove inductively that such an arc exists. Let $\delta$ be the closed curve obtained by traversing $\delta_0$ first in one direction, then in the other. Then let $\Gamma$ be obtained by concatenating $\delta$ with every $\zeta(p_i)$ and $\zeta(\tilde p_j)$ contained in $\Delta_\downarrow(u,\gamma)$.
\begin{observation}
The curves $\Gamma$ and $\partial \Delta_\downarrow(u,\gamma)$ are homotopic in $\bbT\setminus \{p_1,\ldots,\allowbreak p_M,\allowbreak\tilde p_1,\ldots,\allowbreak\tilde p_M\}$. Also $w(\delta) = 0$. It follows that:
\[
w(\partial \Delta_\downarrow(u,\gamma)) = w(\Gamma) = \sum_{p_i\in \Delta_\downarrow(u,\gamma)} w(\zeta(p_i)) + \sum_{\tilde p_j\in \Delta_\downarrow(u,\gamma)} w(\zeta(\tilde p_j))
\]
\end{observation}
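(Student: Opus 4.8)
The plan is to establish, in this order: that $\Delta_\downarrow(u,\gamma)$ is a topological open disk; that $\partial\Delta_\downarrow(u,\gamma)$ is freely homotopic to $\Gamma$ inside $\bbT\setminus\{p_1,\dots,p_M,\tilde p_1,\dots,\tilde p_M\}$; that $w(\delta)=0$; and then to read off the displayed identity using Observation \ref{lem:homotopy classes} and the additivity of $w$ under concatenation of loops meeting at a point, both already available. Only the second point is substantive, and the first point reduces it to the classical fact that the boundary of a finitely punctured disk is freely homotopic to a chain of loops around the punctures.

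To see that $\Delta_\downarrow(u,\gamma)$ is an open disk, I would cut $\bbT=\bbS^1\times\bbS^1$ open along the two coordinate circles $\bbS^1\times\{\tilde\kappa(\tilde u)\}$ and $\{\kappa(u)\}\times\bbS^1$ through the base point. Being a $(1,0)$-curve and a $(0,1)$-curve meeting in a single point, they cut $\bbT$ into a closed square $S$, with the base point becoming its four corners. Parametrizing $\partial K$ from $u$ and $\partial\tilde K$ from $\tilde u$, the curve $\gamma$ is (as recorded before the statement of Lemma \ref{prop:computing index from torus}) the graph of a strictly increasing homeomorphism of the parameter interval fixing its endpoints, so it lifts to an embedded arc in $S$ joining one corner to the opposite one and meeting $\partial S$ nowhere else; by the Jordan arc theorem this arc cuts $S$ into two open topological triangles, one of which projects homeomorphically onto $\Delta_\downarrow(u,\gamma)$. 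Thus $\overline{\Delta_\downarrow(u,\gamma)}$ is a closed topological disk with boundary $\partial\Delta_\downarrow(u,\gamma)$. I would also record here that, since $u\notin\partial\tilde K$ and $\tilde u\notin\partial K$ and $\phi$ is fixed-point-free, no $p_i$ or $\tilde p_i$ lies on $\partial\Delta_\downarrow(u,\gamma)$, and every $p_i$ or $\tilde p_i$ not in $\Delta_\downarrow(u,\gamma)$ lies in $\Delta_\uparrow(u,\gamma)$, which is disjoint from $\overline{\Delta_\downarrow(u,\gamma)}$.

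For the homotopy, write $D=\overline{\Delta_\downarrow(u,\gamma)}$ and let $\zeta_1,\dots,\zeta_k$ be the squares $\zeta(p_i),\zeta(\tilde p_i)$ for those $p_i,\tilde p_i$ in $\Delta_\downarrow(u,\gamma)$. Removing from $D$ the open boxes bounded by these squares produces a compact connected planar surface $P$ with boundary $\partial\Delta_\downarrow(u,\gamma)\cup\zeta_1\cup\dots\cup\zeta_k$, and by construction $\Gamma$, which is the arc $\delta_0$ together with the squares $\zeta_j$ that it meets in one point each, is a spine of $P$, so $P$ deformation retracts onto $\Gamma$. This is the standard picture drawn in Figure \ref{fig:proving amazing index}: a disk with $k$ open subdisks deleted collapses onto a one-complex joining the $k$ inner boundary circles, and the retraction carries the outer boundary $\partial\Delta_\downarrow(u,\gamma)$ onto a loop traversing this one-complex that, with the orientations shown, realizes the product of the $\zeta_j$ in a suitable cyclic order, that is, onto a reparametrization of $\Gamma$. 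Since $P\subset D\subset\bbT\setminus\{p_1,\dots,p_M,\tilde p_1,\dots,\tilde p_M\}$ by the remark above, the retraction is a free homotopy from $\partial\Delta_\downarrow(u,\gamma)$ to $\Gamma$ in $\bbT\setminus\{p_1,\dots,p_M,\tilde p_1,\dots,\tilde p_M\}$, whence $w(\partial\Delta_\downarrow(u,\gamma))=w(\Gamma)$ by Observation \ref{lem:homotopy classes}.

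Finally, $\delta$ is $\delta_0$ traversed forward and then backward, so its image under $(x,y)\mapsto\tilde\kappa\inv(y)-\kappa\inv(x)$ is an arc traced and then retraced, which has winding number $0$ about the origin; and it misses the origin because $\delta_0$ avoids every $p_i$ and $\tilde p_i$, these being exactly the points of $\bbT$ at which the two coordinates parametrize the same point of $\bbC$. Hence $w(\delta)=0$. Since $\Gamma$ is $\delta$ concatenated with the loops $\zeta(p_i),\zeta(\tilde p_i)$ lying in $\Delta_\downarrow(u,\gamma)$, the additivity of $w$ under concatenation gives $w(\Gamma)=w(\delta)+\sum_{p_i\in \Delta_\downarrow(u,\gamma)} w(\zeta(p_i))+\sum_{\tilde p_i\in \Delta_\downarrow(u,\gamma)} w(\zeta(\tilde p_i))$, which reduces to $\sum_{p_i\in \Delta_\downarrow(u,\gamma)} w(\zeta(p_i))+\sum_{\tilde p_i\in \Delta_\downarrow(u,\gamma)} w(\zeta(\tilde p_i))$; combined with $w(\partial\Delta_\downarrow(u,\gamma))=w(\Gamma)$ this is the displayed identity. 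The step I expect to be the main obstacle is the deformation-retraction claim: one must check that the retraction of the holed disk $P$ carries its outer boundary not merely into $P$ but onto a loop freely homotopic to $\Gamma$ exactly as constructed and with compatible orientations, which is where one genuinely uses that $\delta_0$ is embedded, lies in the interior of $\Delta_\downarrow(u,\gamma)$, and meets each $\zeta_j$ in a single point.
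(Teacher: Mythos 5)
Your argument is correct and follows the same route the paper intends: the paper simply asserts the free homotopy via the construction of $\Gamma$ and Figure \ref{fig:proving amazing index}, together with $w(\delta)=0$ and additivity of $w$ under concatenation, and your cut-open-square and holed-disk retraction argument just supplies the standard details behind that picture. Your orientation caveat at the end is the right thing to flag, but it is exactly the classical boundary-of-a-punctured-disk fact the paper is invoking, so there is no substantive divergence.
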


\noindent See Figure \ref{fig:proving amazing index} for an example. On the other hand, the following holds.

\begin{figure}
\centering

\scalebox{.9} 
{
\begin{pspicture}(0,-3.953125)(9.065,3.953125)
\definecolor{color308b}{rgb}{0.9215686274509803,0.9215686274509803,0.9215686274509803}
\pscustom[linestyle=none,fillstyle=solid,fillcolor = color308b]
{
\newpath
\moveto(1.69,2.5796876)
\lineto(1.89,2.4196875)
\curveto(1.99,2.3396876)(2.28,2.0546875)(2.47,1.8496875)
\curveto(2.66,1.6446875)(2.89,1.3196875)(2.93,1.1996875)
\curveto(2.97,1.0796875)(3.09,0.8246875)(3.17,0.6896875)
\curveto(3.25,0.5546875)(3.44,0.3296875)(3.55,0.2396875)
\curveto(3.66,0.1496875)(3.845,-0.0403125)(3.92,-0.1403125)
\curveto(3.995,-0.2403125)(4.145,-0.4453125)(4.22,-0.5503125)
\curveto(4.295,-0.6553125)(4.465,-0.8553125)(4.56,-0.9503125)
\curveto(4.655,-1.0453125)(4.82,-1.1903125)(4.89,-1.2403125)
\curveto(4.96,-1.2903125)(5.13,-1.4053125)(5.23,-1.4703125)
\curveto(5.33,-1.5353125)(5.48,-1.6753125)(5.53,-1.7503124)
\curveto(5.58,-1.8253125)(5.665,-1.9453125)(5.7,-1.9903125)
\curveto(5.735,-2.0353124)(5.81,-2.1303124)(5.85,-2.1803124)
\curveto(5.89,-2.2303126)(5.97,-2.3103125)(6.01,-2.3403125)
\curveto(6.05,-2.3703125)(6.13,-2.4153125)(6.17,-2.4303124)
\curveto(6.21,-2.4453125)(6.295,-2.4603126)(6.34,-2.4603126)
\curveto(6.385,-2.4603126)(6.5,-2.4353125)(6.57,-2.4103124)
\curveto(6.64,-2.3853126)(6.82,-2.2553124)(6.93,-2.1503124)
\curveto(7.04,-2.0453124)(7.235,-1.8803124)(7.32,-1.8203125)
\curveto(7.405,-1.7603126)(7.635,-1.5903125)(7.78,-1.4803125)
\curveto(7.925,-1.3703125)(8.155,-1.1853125)(8.24,-1.1103125)
\curveto(8.325,-1.0353125)(8.49,-0.8753125)(8.57,-0.7903125)
\curveto(8.65,-0.7053125)(8.765,-0.5353125)(8.8,-0.4503125)
\curveto(8.835,-0.3653125)(8.88,-0.2203125)(8.89,-0.1603125)
\curveto(8.9,-0.1003125)(8.93,0.0446875)(8.95,0.1296875)
\curveto(8.97,0.2146875)(9.005,0.3746875)(9.02,0.4496875)
\curveto(9.035,0.5246875)(9.045,0.7096875)(9.04,0.8196875)
\curveto(9.035,0.9296875)(8.98,1.1296875)(8.93,1.2196875)
\curveto(8.88,1.3096875)(8.725,1.5196875)(8.62,1.6396875)
\curveto(8.515,1.7596875)(8.305,2.0246875)(8.2,2.1696875)
\curveto(8.095,2.3146875)(7.9,2.5396874)(7.81,2.6196876)
\curveto(7.72,2.6996875)(7.445,2.8696876)(7.26,2.9596875)
\curveto(7.075,3.0496874)(6.76,3.1896875)(6.63,3.2396874)
\curveto(6.5,3.2896874)(6.09,3.3846874)(5.81,3.4296875)
\curveto(5.53,3.4746876)(5.115,3.5346875)(4.98,3.5496874)
\curveto(4.845,3.5646875)(4.53,3.5846875)(4.35,3.5896876)
\curveto(4.17,3.5946875)(3.845,3.5596876)(3.7,3.5196874)
\curveto(3.555,3.4796875)(3.275,3.3746874)(3.14,3.3096876)
\curveto(3.005,3.2446876)(2.79,3.1446874)(2.71,3.1096876)
\curveto(2.63,3.0746875)(2.485,3.0146875)(2.42,2.9896874)
\curveto(2.355,2.9646876)(2.195,2.8796875)(2.1,2.8196876)
\curveto(2.005,2.7596874)(1.89,2.6896875)(1.87,2.6796875)
\curveto(1.85,2.6696875)(1.825,2.6396875)(1.82,2.6196876)
\curveto(1.815,2.5996876)(1.8,2.5646875)(1.79,2.5496874)
\curveto(1.78,2.5346875)(1.77,2.5146875)(1.77,2.4996874)
}

\pscustom[linestyle=none,fillstyle=hlines]
{
\newpath
\moveto(1.47,-1.4403125)
\lineto(1.61,-1.4003125)
\curveto(1.68,-1.3803124)(1.92,-1.2903125)(2.09,-1.2203125)
\curveto(2.26,-1.1503125)(2.49,-1.0453125)(2.55,-1.0103126)
\curveto(2.61,-0.9753125)(2.74,-0.8903125)(2.81,-0.8403125)
\curveto(2.88,-0.7903125)(3.02,-0.6653125)(3.09,-0.5903125)
\curveto(3.16,-0.5153125)(3.295,-0.3603125)(3.36,-0.2803125)
\curveto(3.425,-0.2003125)(3.58,-0.0753125)(3.67,-0.0303125)
\curveto(3.76,0.0146875)(3.965,0.0946875)(4.08,0.1296875)
\curveto(4.195,0.1646875)(4.455,0.2946875)(4.6,0.3896875)
\curveto(4.745,0.4846875)(4.96,0.6396875)(5.03,0.6996875)
\curveto(5.1,0.7596875)(5.24,0.8746875)(5.31,0.9296875)
\curveto(5.38,0.9846875)(5.52,1.1046875)(5.59,1.1696875)
\curveto(5.66,1.2346874)(5.79,1.3496875)(5.85,1.3996875)
\curveto(5.91,1.4496875)(6.055,1.5446875)(6.14,1.5896875)
\curveto(6.225,1.6346875)(6.37,1.7096875)(6.43,1.7396874)
\curveto(6.49,1.7696875)(6.615,1.8246875)(6.68,1.8496875)
\curveto(6.745,1.8746876)(6.86,1.9096875)(6.91,1.9196875)
\curveto(6.96,1.9296875)(7.07,1.9396875)(7.13,1.9396875)
\curveto(7.19,1.9396875)(7.3,1.9396875)(7.35,1.9396875)
\curveto(7.4,1.9396875)(7.5,1.9246875)(7.55,1.9096875)
\curveto(7.6,1.8946875)(7.68,1.8596874)(7.71,1.8396875)
\curveto(7.74,1.8196875)(7.835,1.6996875)(7.9,1.5996875)
\curveto(7.965,1.4996876)(8.115,1.1346875)(8.2,0.8696875)
\curveto(8.285,0.6046875)(8.45,0.2146875)(8.53,0.0896875)
\curveto(8.61,-0.0353125)(8.705,-0.2703125)(8.72,-0.3803125)
\curveto(8.735,-0.4903125)(8.695,-0.7453125)(8.64,-0.8903125)
\curveto(8.585,-1.0353125)(8.41,-1.4003125)(8.29,-1.6203125)
\curveto(8.17,-1.8403125)(7.975,-2.1903124)(7.9,-2.3203125)
\curveto(7.825,-2.4503126)(7.7,-2.6303124)(7.65,-2.6803124)
\curveto(7.6,-2.7303126)(7.43,-2.8553126)(7.31,-2.9303124)
\curveto(7.19,-3.0053124)(6.87,-3.1503124)(6.67,-3.2203126)
\curveto(6.47,-3.2903125)(6.115,-3.3803124)(5.96,-3.4003124)
\curveto(5.805,-3.4203124)(5.51,-3.4453125)(5.37,-3.4503126)
\curveto(5.23,-3.4553125)(4.995,-3.4903126)(4.9,-3.5203125)
\curveto(4.805,-3.5503125)(4.575,-3.5853126)(4.44,-3.5903125)
\curveto(4.305,-3.5953126)(4.07,-3.6003125)(3.97,-3.6003125)
\curveto(3.87,-3.6003125)(3.57,-3.5553124)(3.37,-3.5103126)
\curveto(3.17,-3.4653125)(2.845,-3.3803124)(2.72,-3.3403125)
\curveto(2.595,-3.3003125)(2.34,-3.1453125)(2.21,-3.0303125)
\curveto(2.08,-2.9153125)(1.92,-2.7403126)(1.89,-2.6803124)
\curveto(1.86,-2.6203125)(1.815,-2.5003126)(1.8,-2.4403124)
\curveto(1.785,-2.3803124)(1.745,-2.2553124)(1.72,-2.1903124)
\curveto(1.695,-2.1253126)(1.65,-2.0153124)(1.63,-1.9703125)
\curveto(1.61,-1.9253125)(1.57,-1.8353125)(1.55,-1.7903125)
\curveto(1.53,-1.7453125)(1.505,-1.6703125)(1.5,-1.6403126)
\curveto(1.495,-1.6103125)(1.485,-1.5553125)(1.48,-1.5303125)
\curveto(1.475,-1.5053124)(1.48,-1.4603125)(1.49,-1.4403125)
}

\pscustom[linewidth=0.02]
{
\newpath
\moveto(1.47,-1.4403125)
\lineto(1.61,-1.4003125)
\curveto(1.68,-1.3803124)(1.92,-1.2903125)(2.09,-1.2203125)
\curveto(2.26,-1.1503125)(2.49,-1.0453125)(2.55,-1.0103126)
\curveto(2.61,-0.9753125)(2.74,-0.8903125)(2.81,-0.8403125)
\curveto(2.88,-0.7903125)(3.02,-0.6653125)(3.09,-0.5903125)
\curveto(3.16,-0.5153125)(3.295,-0.3603125)(3.36,-0.2803125)
\curveto(3.425,-0.2003125)(3.58,-0.0753125)(3.67,-0.0303125)
\curveto(3.76,0.0146875)(3.965,0.0946875)(4.08,0.1296875)
\curveto(4.195,0.1646875)(4.455,0.2946875)(4.6,0.3896875)
\curveto(4.745,0.4846875)(4.96,0.6396875)(5.03,0.6996875)
\curveto(5.1,0.7596875)(5.24,0.8746875)(5.31,0.9296875)
\curveto(5.38,0.9846875)(5.52,1.1046875)(5.59,1.1696875)
\curveto(5.66,1.2346874)(5.79,1.3496875)(5.85,1.3996875)
\curveto(5.91,1.4496875)(6.055,1.5446875)(6.14,1.5896875)
\curveto(6.225,1.6346875)(6.37,1.7096875)(6.43,1.7396874)
\curveto(6.49,1.7696875)(6.615,1.8246875)(6.68,1.8496875)
\curveto(6.745,1.8746876)(6.86,1.9096875)(6.91,1.9196875)
\curveto(6.96,1.9296875)(7.07,1.9396875)(7.13,1.9396875)
\curveto(7.19,1.9396875)(7.3,1.9396875)(7.35,1.9396875)
\curveto(7.4,1.9396875)(7.5,1.9246875)(7.55,1.9096875)
}

\psline[linestyle=none,linewidth=0.1,arrows=<-](5.1,0.7596875)(5.31,0.9296875)

\pscustom[linewidth=0.02]
{
\newpath
\moveto(1.69,2.5796876)
\lineto(1.89,2.4196875)
\curveto(1.99,2.3396876)(2.28,2.0546875)(2.47,1.8496875)
\curveto(2.66,1.6446875)(2.89,1.3196875)(2.93,1.1996875)
\curveto(2.97,1.0796875)(3.09,0.8246875)(3.17,0.6896875)
\curveto(3.25,0.5546875)(3.44,0.3296875)(3.55,0.2396875)
\curveto(3.66,0.1496875)(3.845,-0.0403125)(3.92,-0.1403125)
\curveto(3.995,-0.2403125)(4.145,-0.4453125)(4.22,-0.5503125)
\curveto(4.295,-0.6553125)(4.465,-0.8553125)(4.56,-0.9503125)
\curveto(4.655,-1.0453125)(4.82,-1.1903125)(4.89,-1.2403125)
\curveto(4.96,-1.2903125)(5.13,-1.4053125)(5.23,-1.4703125)
\curveto(5.33,-1.5353125)(5.48,-1.6753125)(5.53,-1.7503124)
\curveto(5.58,-1.8253125)(5.665,-1.9453125)(5.7,-1.9903125)
\curveto(5.735,-2.0353124)(5.81,-2.1303124)(5.85,-2.1803124)
\curveto(5.89,-2.2303126)(5.97,-2.3103125)(6.01,-2.3403125)
\curveto(6.05,-2.3703125)(6.13,-2.4153125)(6.17,-2.4303124)
\curveto(6.21,-2.4453125)(6.295,-2.4603126)(6.34,-2.4603126)
}

\psline[linewidth=0.1,linestyle=none,arrows=->](2.97,1.0796875)(3.17,0.6896875)

\usefont{T1}{ptm}{m}{n}
\rput(5.5845313,3.7496874){$K$}
\usefont{T1}{ptm}{m}{n}
\rput(5.7145314,-3.7503126){$\tilde K$}
\psdots[dotsize=0.12](3.77,0.0196875)
\usefont{T1}{ptm}{m}{n}
\rput(3.1945312,0.0496875){$P_i$}
\psdots[dotsize=0.12](2.55,1.7796875)
\psdots[dotsize=0.12](5.25,-1.4603125)
\psdots[dotsize=0.12](2.31,-1.1403126)
\psdots[dotsize=0.12](6.17,1.5996875)
\usefont{T1}{ptm}{m}{n}
\rput(1.8545312,1.6096874){$\kappa\inv(x_0)$}
\usefont{T1}{ptm}{m}{n}
\rput(11.174531,-1.2303125){$\kappa\inv(x_1)$}
\usefont{T1}{ptm}{m}{n}
\rput(5.9045315,2.0296875){$\tilde\kappa\inv(\tilde x_0)$}
\usefont{T1}{ptm}{m}{n}
\rput(1.7045312,-0.7903125){$\tilde \kappa\inv(\tilde x_1)$}

\pscustom[linewidth=0.01]
{
\newpath
\moveto(5.41,-1.3603125)
\lineto(5.58,-1.2303125)
\curveto(5.665,-1.1653125)(5.835,-1.0853125)(5.92,-1.0703125)
\curveto(6.005,-1.0553125)(6.185,-1.0403125)(6.28,-1.0403125)
\curveto(6.375,-1.0403125)(6.61,-1.0603125)(6.75,-1.0803125)
\curveto(6.89,-1.1003125)(7.21,-1.1853125)(7.39,-1.2503124)
\curveto(7.57,-1.3153125)(7.945,-1.4953125)(8.14,-1.6103125)
\curveto(8.335,-1.7253125)(8.67,-1.8903126)(8.81,-1.9403125)
\curveto(8.95,-1.9903125)(9.33,-2.0553124)(9.57,-2.0703125)
\curveto(9.81,-2.0853126)(10.225,-2.0503125)(10.4,-2.0003126)
\curveto(10.575,-1.9503125)(10.785,-1.8703125)(10.82,-1.8403125)
\curveto(10.855,-1.8103125)(10.915,-1.7553124)(10.94,-1.7303125)
\curveto(10.965,-1.7053125)(11.005,-1.6553125)(11.02,-1.6303124)
\curveto(11.035,-1.6053125)(11.055,-1.5553125)(11.07,-1.4803125)
}
\psline[linewidth=0.07,linestyle=none,arrows=->](5.58,-1.2303125)(5.41,-1.3603125)

\end{pspicture} 
}

\caption[The local picture near $P_i$]
{
\label{fig:proving local winding pi}
{\bf The local picture near $P_i$.}  This allows us to compute the ``local fixed-point index'' $w(\zeta(p_i))$ of $f$ near $P_i$.
}
\end{figure}
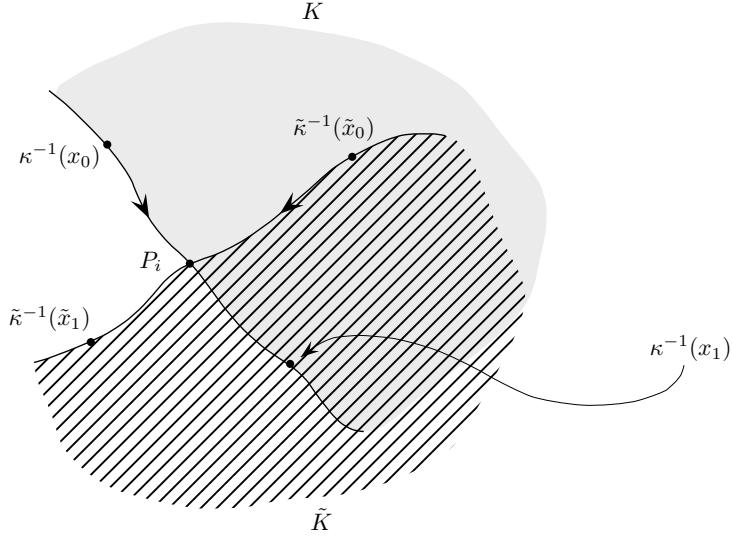

\begin{observation}
\label{lem:local windings for pi qi}
$w(\zeta(p_i)) = 1$, $w(\zeta(\tilde p_i)) = -1$
\end{observation}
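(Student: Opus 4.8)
The plan is to recognize $w(\zeta(p_i))$ as a purely local winding number at $P_i$ and to evaluate it by deforming the local picture to a linear model. Choose a short sub-arc of $\partial K$ through $P_i$, parametrized by $a\colon[-1,1]\to\partial K$ with $a(0)=P_i$ and positive parameter matching the positive orientation of $\partial K$; similarly $b\colon[-1,1]\to\partial\tilde K$ with $b(0)=P_i$. Then $(s,t)\mapsto(\kappa(a(s)),\tilde\kappa(b(t)))$ is an orientation-preserving parametrization of a neighborhood of $p_i$ in $\bbT$ with $p_i=(0,0)$, under which the positive first and second torus coordinates correspond to increasing $s$ and $t$ respectively; in these coordinates $\zeta(p_i)$ is the counterclockwise boundary of a small square $[-\varepsilon,\varepsilon]^2$. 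By definition $w(\zeta(p_i))$ is then the winding number around $0$ of the closed curve $\{\,b(t)-a(s)\,\}$ as $(s,t)$ runs once counterclockwise over $\partial([-\varepsilon,\varepsilon]^2)$. Because the two sub-arcs meet only at $P_i=a(0)=b(0)$, this curve avoids $0$, and shrinking $\varepsilon$ gives a homotopy of it inside $\bbC\setminus\{0\}$; thus $w(\zeta(p_i))$ is independent of $\varepsilon$ and equals the local degree at $p_i$ of $F(s,t)=b(t)-a(s)$.

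Next I would determine the model configuration. By transverse position (Definition \ref{transverse position}) there is a homeomorphism $h$ of a neighborhood $U$ of $P_i$ onto $\bbD$ taking $\partial K\cap U$ to $\bbR\cap\bbD$ and $\partial\tilde K\cap U$ to $i\bbR\cap\bbD$; composing with $z\mapsto\bar z$ if $h$ reverses orientation, and then with $z\mapsto-z$ if necessary, we may assume $h$ is orientation-preserving and carries the positive direction of $\partial K$ to the positive real direction. Then $h(K)$ is locally the upper half-plane; and by Lemma \ref{lem:gen pos lemma} the curve $\partial K$ is entering $\tilde K$ at $P_i$, so $h(a(s))$ is a positive real for small $s>0$, which forces $h(\tilde K)$ to be locally the right half-plane and hence the positive direction of $\partial\tilde K$ to point downward in the chart. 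Thus $h\circ a$ is a strictly increasing function into $\bbR$ vanishing at $0$, and $h\circ b(t)=i\beta(t)$ with $\beta$ strictly decreasing and $\beta(0)=0$. (At $\tilde P_i$ the only difference is that $\partial K$ exits $\tilde K$, so $\beta$ is strictly increasing instead.)

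I would then reduce to an affine model by two homotopies through configurations in which the two sub-arcs still meet only at $P_i$, so that the difference curves vary within $\bbC\setminus\{0\}$ and $w$ is unchanged. First, replace $h\circ a$ by its convex combination with the identity $s\mapsto s$, and $h\circ b=i\beta$ by $i$ times the convex combination of $\beta$ with $t\mapsto-t$ (resp.\ with $t\mapsto t$ at $\tilde P_i$); monotonicity is preserved throughout, so these remain embeddings into $\bbR\cap\bbD$ and $i\bbR\cap\bbD$ whose $h$-preimages meet only at $P_i$, and we arrive at the configuration $a(s)=h^{-1}(s)$, $b(t)=h^{-1}(-it)$ (resp.\ $b(t)=h^{-1}(it)$). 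Second, deform $h^{-1}$, restricted to a small closed disk about $0$, through orientation-preserving embeddings to an affine map $A(w)=Lw+d$ with $\det L>0$; this is possible because $h^{-1}$ extends to an orientation-preserving homeomorphism of $\bbC$, every such is homotopic through homeomorphisms to the identity (the Fact recalled in the proof of Lemma \ref{prop:index invariance}), and injectivity at each stage keeps the two sub-arcs disjoint off $P_i$. For the affine model the difference map is $F(s,t)=A(-it)-A(s)=-L(s+it)$ at $p_i$, and $F(s,t)=A(it)-A(s)=-L(s-it)$ at $\tilde p_i$ — a real-linear automorphism of $\bbR^2$ with determinant $\det L>0$ in the first case and $-\det L<0$ in the second. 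Hence $F$ sends the counterclockwise boundary of the square onto the boundary of a parallelogram around $0$, traversed counterclockwise for $p_i$ and clockwise for $\tilde p_i$, whence $w(\zeta(p_i))=1$ and $w(\zeta(\tilde p_i))=-1$. Figure \ref{fig:proving local winding pi} records the same computation: near $P_i$ the vector $b(t)-a(s)$ makes exactly one positive turn as $(s,t)$ circles $p_i$.

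The step I expect to be the obstacle is the second homotopy — making precise that the chart $h$ may be deformed to an affine map through embeddings without the two sub-arcs ever meeting away from $P_i$; the remainder is bookkeeping with monotone functions and the sign of a $2\times2$ determinant.
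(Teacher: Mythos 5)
Your proof is correct and follows essentially the paper's route: the paper likewise reduces, via an orientation-preserving homeomorphism of the local picture (resting on the same isotopy-to-identity fact you invoke) and the entering/exiting conventions at $P_i$ versus $\tilde P_i$, to the standard transverse-crossing model and then observes that the difference vector makes exactly one positive (resp.\ negative) turn as one traverses the small square. Your Schoenflies-plus-isotopy justification and the affine/determinant computation are just a more formal rendering of the paper's ``up to orientation-preserving homeomorphism the picture near $P_i$ is as in Figure \ref{fig:proving local winding pi}'' followed by tracing the vector around $\zeta(p_i)$.
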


\noindent To see why, suppose that $\zeta(p_i) = \partial ( [x_0 \to x_1]_{\bbS^1} \times [\tilde x_0\to \tilde x_1]_{\bbS^1})$.
(If $\alpha$ is an oriented topological circle with $a_0, a_1 \in \alpha$, then we denote by $[a_0 \to a_1]_\alpha$ the closed oriented sub-arc of $\alpha$ beginning at $a_0$ and ending at $a_1$.)
Then up to orientation-preserving homeomorphism the picture near $P_i$ is as in Figure \ref{fig:proving local winding pi}. We let $(x,\tilde x)$ traverse $\zeta(p_i)$ positively starting at $(x_0,\tilde x_0)$, keeping track of the vector $\tilde\kappa\inv(\tilde x) - \tilde\kappa\inv(x)$ as we do so. The vector $\tilde\kappa\inv(\tilde x_0) - \tilde\kappa\inv(x_0)$ points to the right. As $x$ varies from $x_0$ to $x_1$, the vector $\tilde\kappa\inv(\tilde x) - \tilde\kappa\inv(x)$ rotates in the positive direction, that is, counter-clockwise, until it arrives at $\tilde \kappa\inv (\tilde x_0) - \kappa\inv (x_1)$, which points upward. Continuing in this fashion, we see that $\tilde\kappa\inv(\tilde x) - \tilde\kappa\inv(x)$ makes one full counter-clockwise rotation as we traverse $\zeta(p_i)$. The proof that $w(\zeta(\tilde p_i)) = -1$ is similar. Combining all of our observations establishes equation \ref{eq 1:prop:computing index from torus eq}. The proof that equation \ref{eq 2:prop:computing index from torus eq} holds is similar.\qed

\section{Preparatory plane-topological lemmas} \label{geometric-lemmas-section}

As usual, let $K$ and $\tilde K$ be
compact Jordan domains in transverse position,
whose boundaries meet at least twice.
Note that the connected components of
$\jdIntersection$ are points,
that those of
$(\partial K \setminus \partial \tilde K)
\cup (\partial \tilde K \setminus \partial K)$
are topological open intervals,
and that those of
$\hat \bbC \setminus (\jdUnion)$
are topological open disks.
Denote by $X = (V, E, F)$
the cellular decomposition of the Riemann sphere
obtained by taking the connected components of
$\jdIntersection$ to be the vertices,
those of $(\partial K \setminus \partial \tilde K)
\cup (\partial \tilde K \setminus \partial K)$ to be the edges, and
those of $\hat \bbC \setminus (\jdUnion)$ to be the faces.

For a face $f \in F$, define $\deg(f)$ to be the number of sides of $f$.
Note that $\deg(f)$ is even for all $f \in F$.
Let $F_n = \{f \in F : \deg(f) = n\}$,
and $F_{> n} = \{f \in F : \deg(f) > n\}$.

\begin{lemma} \label{at-least-four-bigons-lemma}
    $|F_2| \ge 4$.
    Furthermore, if there is an $f\in F$ having $\deg(f) > 4$,
    then $|F_2| > 4$.
\end{lemma}

\begin{proof}
    Recall that for a cellular decomposition of a topological sphere,
    we have $|V| - |E| + |F| = 2$.
    In our setup, it is easy to see that $|E| = 2 |V|$.
    It follows that $|F| = |E| / 2 + 2$.

    Next, note that
    \[
        2 |E| = \sum_{f \in F} \deg(f) = 2 |F_2| + 4 |F_4| + 6 |F_6| + \cdots
    \]
    so we get that $|E| = |F_2| + 2 |F_4| + 3 |F_6| + \cdots$.
    Finally, note that $|F| = |F_2| + |F_{> 2}|$.

    Putting these together gives
    \begin{align*}
        |E| & = |F_2| + 2 |F_4| + 3 |F_6| + 4 |F_8| + \cdots \\
        & \ge |F_2| + 2 |F_4| + 2 |F_6| + 2 |F_8| + \cdots \\
        & = |F_2| + 2 |F_{>2}| \\
        & = 2 |F| - |F_2| = |E| + 4 - |F_2|
    \end{align*}
    noting that the inequality on the second line is strict
    if there is an $f \in F$ having $\deg(f) > 4$.
    The lemma follows.
\end{proof}

Let $F_K = \{f \in F : f \subset K \setminus \tilde K\}$,
and define $F_{\tilde K}$ analogously.
Let
$\fNone = \{f \in F : f \subset \hat \bbC \setminus (K \cup \tilde K)\}$,
and let
$\fBoth = \{f \in F : f \subset K \cap \tilde K\}$.
Then, for example, we have that
$F = F_K \sqcup F_{\tilde K} \sqcup \fNone \sqcup \fBoth$.
We call $f \in F$ a \emph{bigon of $X$} if $\deg(f) = 2$,
and we say that $f$ is \emph{finite} if
the point $\infty \in \hat \bbC$ is not in $f$.
For example, by Lemma \ref{at-least-four-bigons-lemma},
we always have at least 3 finite bigons.
Then $F_2$ denotes the set of all of the bigons of $X$.

\begin{lemma} \label{f-k-equals-f-tilde-k-lemma}
    $|F_K| = |F_{\tilde K}|$
\end{lemma}

\begin{proof}
    This is apparent if $\partial K$ and $\partial \tilde K$ meet twice,
    because in that case $K$ and $\tilde K$ can be in only one
    topological configuration (for example, by Lemma \ref{uniqa}).
    We proceed by induction on the size of $|V| = |\jdIntersection|$,
    using $|V| = 2$ for our base case.

    Suppose that $|V| \ge 4$ and
    let $f$ be a finite bigon of $F$.
    We have depicted the picture near $f$
    in Figure \ref{f-k-equals-f-tilde-k-figure}.
    In this figure, we have that $a, b, c, d, f$ are faces of $X = (V, E, F)$,
    the cellular decomposition induced by $K$ and $\tilde K$.
    Note that $c \ne d$ because $|V| \ge 4$.

    Our plan is to modify $K$ and $\tilde K$ as shown in the figure,
    to create two new compact Jordan domains $K'$ and $\tilde K'$.
    Using the prime symbol in the natural way, we have that
    $a', b', c'$ in the figure are faces of $X' = (V', E', F')$,
    the cellular decomposition induced by $K'$ and $\tilde K'$.
    The modification results in $c$ and $d$ coalescing to form $c'$,
    and causes $f$ to effectively disappear.
    It also reduces the number of intersection points of our
    Jordan domains, allowing us to apply induction.

    There are four possibilities to check.
    We work out the first in detail.
    Suppose that $f \in F_K$.
    In this case it follows that
    $a \in \fBoth$,
    $b \in \fNone$, and
    $c, d \in F_{\tilde K}$.
    Our modification to $K, \tilde K$
    in this case has the effect of decreasing each of
    $|F_K|$ and $|F_{\tilde K}|$ by 1.
    We apply the induction hypothesis and we are done.

    The remaining three cases to check are
    $f \in F_{\tilde K}, \fBoth, \fNone$.
    These are left as straightforward exercises for the reader.
\end{proof}

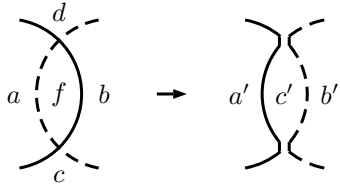
\begin{figure}
    \centering
    \psscalebox{1.0 1.0}
    {
        \begin{pspicture}(0,-1.4)(2.4,1.4)
            \psarc[linecolor=black, linewidth=0.04, dimen=outer](-0.7, 0){1}{-80.0}{80.0}
            \psarc[linecolor=black, linewidth=0.04, dimen=outer, linestyle=dashed, dash=0.18cm 0.1cm](0.7, 0){1}{100.0}{260.0}

            \usefont{T1}{ptm}{m}{n}
            \rput(-0.6, -0.05){$a$}
            \usefont{T1}{ptm}{m}{n}
            \rput(0, 0){$f$}
            \usefont{T1}{ptm}{m}{n}
            \rput(0.6, 0){$b$}
            \usefont{T1}{ptm}{m}{n}
            \rput(0, 1.1){$d$}
            \usefont{T1}{ptm}{m}{n}
            \rput(0, -1.1){$c$}

            \psline[linecolor=black, linewidth=0.04, arrowsize=0.05cm 2.0, arrowlength=1.4, arrowinset=0.0]{->}(1.3, 0)(1.7, 0)

            \psarc[linecolor=black, linewidth=0.04, dimen=outer](2.3, 0){1}{-80.0}{-50.0}
            \psarc[linecolor=black, linewidth=0.04, dimen=outer, linestyle=dashed, dash=0.18cm 0.1cm](2.3, 0){1}{-40.0}{40.0}
            \psarc[linecolor=black, linewidth=0.04, dimen=outer](2.3, 0){1}{50.0}{80.0}
            \psarc[linecolor=black, linewidth=0.04, dimen=outer, linestyle=dashed, dash=0.18cm 0.1cm](3.7, 0){1}{100.0}{130.0}
            \psarc[linecolor=black, linewidth=0.04, dimen=outer](3.7, 0){1}{140.0}{220.0}
            \psarc[linecolor=black, linewidth=0.04, dimen=outer, linestyle=dashed, dash=0.18cm 0.1cm](3.7, 0){1}{230.0}{260.0}

            \psline[linecolor=black, linewidth=0.04](2.93, 0.76)(2.93, 0.64)
            \psline[linecolor=black, linewidth=0.04](2.93, -0.64)(2.93, -0.76)
            \psline[linecolor=black, linewidth=0.04](3.06, -0.64)(3.06, -0.76)
            \psline[linecolor=black, linewidth=0.04](3.06, 0.64)(3.06, 0.76)

            \usefont{T1}{ptm}{m}{n}
            \rput(2.4, 0){$a'$}
            \usefont{T1}{ptm}{m}{n}
            \rput(3, 0){$c'$}
            \usefont{T1}{ptm}{m}{n}
            \rput(3.6, 0){$b'$}
        \end{pspicture}
    }

    \caption{\label{f-k-equals-f-tilde-k-figure}
        {\bf Applying the induction step
        in the proof of Lemma \ref{f-k-equals-f-tilde-k-lemma}.}
        The solid arcs represent segments of
        $\partial K$ and of $\partial K'$ on the
        left and right sides respectively, similarly
        the dashed arcs for $\partial \tilde K$ and $\partial \tilde K'$.
    }
\end{figure}

\begin{lemma} \label{f-both-equals-f-none-lemma}
    $|\fBoth| = |\fNone|$
\end{lemma}

\begin{proof}
    This follows from Lemma \ref{f-k-equals-f-tilde-k-lemma}.
    Let $z$ be a point in the interior of $\tilde K \setminus K$,
    let $\phi : \hat \bbC \to \hat \bbC$ be a homeomorphism
    interchanging $z$ and $\infty$, let
    $K' = \phi(K)$,
    let $\tilde K'$ be the closure of $\hat \bbC \setminus \phi(\tilde K)$,
    and apply Lemma \ref{f-k-equals-f-tilde-k-lemma}
    to $K'$ and $\tilde K'$.
    The details are left to the reader.
\end{proof}

\begin{lemma} \label{face-degree-sums-all-equal-lemma}
    $|V| =
    \sum_{f \in F_K} \deg(f) =
    \sum_{f \in F_{\tilde K}} \deg(f) =
    \sum_{f \in \fNone} \deg(f) =
    \sum_{f \in \fBoth} \deg(f)$
\end{lemma}

\begin{proof}
    Every vertex $v \in V$ lies on the boundaries of exactly four faces of $X$,
    one from each of $F_K, F_{\tilde K}, \fNone, \fBoth$,
    and a face's edge degree and its vertex degree are the same.
    Thus every vertex can be thought of as being counted
    exactly once in each of the sums in the statement of the lemma.
\end{proof}

\begin{lemma} \label{jordan-domain-combinatorics-lemma}
At least one of the following holds:
\begin{enumerate}
    \item \label{JDCL-number-of-bigons}
        There are strictly more than three finite bigons in $X$.
    \item \label{JDCL-bigons-in-intersection}
        There is a (necessarily finite) bigon of $X$ in $\fBoth$.
\end{enumerate}
\end{lemma}

\begin{proof}
    Suppose that Condition \ref{JDCL-number-of-bigons} fails.
    Then, by Lemma \ref{at-least-four-bigons-lemma},
    there are exactly three finite bigons in $X$,
    and one infinite bigon,
    necessarily belonging to $\fNone$.
    Consider the sums $S_\varnothing = \sum_{f \in \fNone} \deg(f)$
    and $S_\cap = \sum_{f \in \fBoth} \deg(f)$.
    The two sums are equal by Lemma \ref{face-degree-sums-all-equal-lemma},
    and they have the same number of terms,
    all of which are positive even integers,
    by Lemma \ref{f-both-equals-f-none-lemma}.
    The sum $S_\varnothing$ has at least one
    term which is equal to $2$ (coming from the infinite bigon).
    By the strictness portion of Lemma \ref{at-least-four-bigons-lemma},
    no term of the sum $S_\cap$ is strictly greater than 4, so
    this sum must have at least one term which is equal to 2,
    which is equivalent to
    Condition \ref{JDCL-bigons-in-intersection}.
\end{proof}

Suppose that $z_1, z_2, z_3 \in \partial K \setminus \partial \tilde K$
and $\tilde z_1, \tilde z_2, \tilde z_3 \in
\partial \tilde K \setminus \partial K$ appear in counterclockwise order
as in the statement of the Three Point Prescription Lemma \ref{tppl}.
Let $f$ be a bigon of $X$.
We define the notion of
\emph{constraint point count} for $f$, denoted $\cpc$, as follows:
\begin{align*}
    \cpc_K(f) & = |\partial f \cap \{z_1, z_2, z_3\}| \\
    \cpc_{\tilde K}(f) & = |\partial f \cap \{\tilde z_1, \tilde z_2, \tilde z_3\}| \\
    \cpc(f) & = (\cpc_K(f), \cpc_{\tilde K}(f))
\end{align*}
Then, for example, we get that:
\begin{align*}
  \sum_{f \in F_2} \cpc_K(f) & \le 3 \\
  \sum_{f \in F_2} \cpc_{\tilde K}(f) & \le 3
\end{align*}
Of course the constraint point count of $f$ depends on
the choices of $z_i, \tilde z_i$.
This dependence is suppressed in the notation
as it is normally clear which $z_i, \tilde z_i$ are meant.

\begin{lemma} \label{cpc-lemma}
    At least one of the following holds:
    \begin{enumerate}
        \item \label{cpc-lemma-condition-1}
            There is a finite bigon $f$ in $X$
            for which $\cpc(f)$ is equal to one of
            $(0, 0), (1, 0), (0, 1)$.
        \item \label{cpc-lemma-condition-2}
            There is a finite bigon $f$ in $X$
            satisfying $f \subset K \cap \tilde K$,
            for which $\cpc(f)$ is equal to one of
            $(2, 0), (1, 1), (0, 2)$.
    \end{enumerate}
\end{lemma}

\begin{proof}
    Recall that by Lemma \ref{at-least-four-bigons-lemma},
    there are always at least three finite bigons in $X$.

    First, suppose that we have strictly more than three
    finite bigons.
    In this case, we will see that
    Condition \ref{cpc-lemma-condition-1} holds.
    Let $f_1, f_2, f_3, f_4$ be four of our finite bigons,
    and suppose for contradiction that
    Condition \ref{cpc-lemma-condition-1} fails.
    Then, for each $f_i$ we have that
    $\cpc_K(f_i) + \cpc_{\tilde K}(f_i) \ge 2$.
    It follows that
    $\sum_{i = 1}^4 \cpc_K(f_i) + \cpc_{\tilde K}(f_i) \ge 8$.
    On the other hand, we know that
    $\sum_{i = 1}^4 \cpc_K(f_i) + \cpc_{\tilde K}(f_i) \allowbreak \le 6$,
    which is a contradiction.

    Next, suppose that we have exactly three finite bigons,
    implying by Lemma \ref{jordan-domain-combinatorics-lemma}
    that at least one of them, call it $f_0$, lies in $K \cap \tilde K$.
    Let $f_1, f_2$ denote the other two.
    Suppose that Condition \ref{cpc-lemma-condition-1} fails.
    Then, we have that
    $\cpc_K(f_i) + \cpc_{\tilde K}(f_i) \ge 2$
    for $i = 0, 1, 2$.
    On the other hand,
    we know as before that
    $\sum_{i = 0}^2 \cpc_K(f_i) + \cpc_{\tilde K}(f_i) \le 6$.
    Putting these inequalities together gives that
    $\cpc_K(f_i) + \cpc_{\tilde K}(f_i) = 2$,
    in particular for $i = 0$,
    implying Condition \ref{cpc-lemma-condition-2}.
\end{proof}

\section{Proof of the Three Point Prescription Lemma \ref{lem:3p}}
\label{sec:3p}

Let $K$ and $\tilde K$ be compact Jordan domains in transverse position.
Let $z_1,z_2,z_3\in \partial K\setminus \partial \tilde K$
appear in counterclockwise order,
similarly
$\tilde z_1,\tilde z_2, \tilde z_3\in \partial \tilde K\setminus \partial K$.
We wish to find
an indexable homeomorphism $\phi:\partial K\to \partial \tilde K$
sending $z_i\mapsto \tilde z_i$
for $i = 1, 2, 3$.

(We remind the interested reader to refer to
Remark \ref{closing-remark} at the end of the section
for a discussion on the
strength of the hypotheses on $K, \tilde K, z_i, \tilde z_i$ in
the statement of the lemma.)

We proceed by induction on the number of intersection points
$\partial K \cap \partial \tilde K$,
recalling that this number is always even.
The Circle Index Lemma \ref{cil} takes care of the cases where
$\partial K$ and $\partial \tilde K$ meet $0$ or $2$ times.
Thus, suppose for the remainder of the argument
that $\partial K$ and $\partial \tilde K$ meet at least $4$ times.

Our plan is to apply basically the same modification
to $K$ and $\tilde K$ as we did
in Lemma \ref{f-k-equals-f-tilde-k-lemma}
and Figure \ref{f-k-equals-f-tilde-k-figure},
except this time, we will be particular
in our choice of which finite bigon to focus on.
For the remainder of the proof, we use
$K'$ and $\tilde K'$ to denote the
domains obtained via this modification.
Figure \ref{fig:pull me 2} gives an alternative,
topologically equivalent, example visualization.

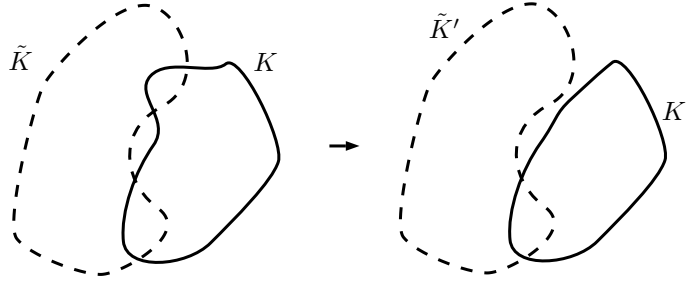
\begin{figure}
\centering
\scalebox{1} 
{
\begin{pspicture}(0,-1.8308277)(9.78291,1.8308275)
\definecolor{color2327b}{rgb}{0.8,0.8,0.8}
\psline[
    linecolor=black, linewidth=0.04, arrowsize=0.05cm 2.0,
    arrowlength=1.4, arrowinset=0.0]{->}(4.8, -0.1)(5.2, -0.1)

\usefont{T1}{ptm}{m}{n}
\rput(3.9524708,1.0158275){$K$}
\usefont{T1}{ptm}{m}{n}
\rput(0.6924707,1.0758275){$\tilde K$}
\usefont{T1}{ptm}{m}{n}
\rput(9.43247,0.37582752){$K'$}
\usefont{T1}{ptm}{m}{n}
\rput(6.332471,1.4958276){$\tilde K'$}
\psbezier[linewidth=0.04,fillcolor=color2327b](2.4610157,-0.089172475)(2.7010157,0.25082752)(2.189661,0.5789255)(2.4210157,0.83082753)(2.6523702,1.0827296)(3.2210157,0.83082753)(3.4210157,0.9908275)(3.6210155,1.1508275)(4.2010155,-0.049172472)(4.1210155,-0.30917248)(4.0410156,-0.5691725)(3.5410156,-1.0691725)(3.2210157,-1.3891724)(2.9010155,-1.7091725)(2.1010156,-1.7691724)(2.0610156,-1.3691725)(2.0210156,-0.9691725)(2.2210157,-0.4291725)(2.4610157,-0.089172475)
\psbezier[linewidth=0.04,linestyle=dashed,dash=0.16cm 0.16cm,fillcolor=color2327b](0.64101565,-1.3291725)(0.8010156,-1.5491725)(1.484355,-1.8108275)(1.7610157,-1.8091725)(2.0376763,-1.8075174)(2.985388,-1.2667643)(2.5210156,-0.92917246)(2.0566432,-0.5915806)(1.9610156,0.11082753)(2.5810156,0.43082753)(3.2010157,0.75082755)(2.8410156,1.7508276)(2.3610156,1.7708275)(1.8810157,1.7908275)(1.1210157,0.9308275)(1.0010157,0.7108275)(0.8810156,0.49082753)(0.48101562,-1.1091725)(0.64101565,-1.3291725)
\psbezier[linewidth=0.04,fillcolor=color2327b](7.6010156,-0.06917247)(7.841016,0.27082753)(7.789661,0.3189255)(8.021015,0.5308275)(8.25237,0.74272954)(8.361015,0.8508275)(8.561016,1.0108275)(8.761016,1.1708275)(9.341016,-0.029172473)(9.261016,-0.28917247)(9.181016,-0.54917246)(8.681016,-1.0491725)(8.361015,-1.3691725)(8.041016,-1.6891725)(7.2410154,-1.7491724)(7.2010155,-1.3491725)(7.1610155,-0.9491725)(7.361016,-0.40917248)(7.6010156,-0.06917247)
\psbezier[linewidth=0.04,linestyle=dashed,dash=0.16cm 0.16cm,fillcolor=color2327b](5.7810154,-1.3091725)(5.9410157,-1.5291724)(6.624355,-1.7908275)(6.9010158,-1.7891725)(7.177676,-1.7875174)(8.125388,-1.2467643)(7.6610155,-0.9091725)(7.1966434,-0.57158065)(7.1010156,0.13082753)(7.7210155,0.45082754)(8.341016,0.77082753)(7.9810157,1.7708275)(7.5010157,1.7908275)(7.0210156,1.8108275)(6.2610154,0.95082754)(6.1410155,0.7308275)(6.0210156,0.51082754)(5.6210155,-1.0891725)(5.7810154,-1.3091725)
\end{pspicture} 
}
\caption{\label{fig:pull me 2}
{\bf A before-and-after view of the modification we apply to $K$ and $\tilde K$
in the induction step of the proof of the Three Point Prescription Lemma \ref{tppl}.}
We essentially get rid of one of our finite bigons.
In this case, we had three choices of which finite bigon to focus on.
}
\end{figure}

We continue with the notation of Section \ref{geometric-lemmas-section}.
Suppose that we have chosen a finite bigon $f$ of $X$.
Beginning with a torus parametrization for $K$ and $\tilde K$,
we pick a torus parametrization for $K'$ and $\tilde K'$, and
show how we can move between these two parametrizations.

Let $U \subset \bbC$ be
a simply connected open neighborhood of $f \subset \bbC$,
with closure $\bar U$,
so that
$\bar U \cap (\partial K \cap \partial \tilde K)
= \partial f \cap (\partial K \cap \partial \tilde K)$,
and so that every $z_i$ and
every $\tilde z_i$ in $\bar U$ also lies in $\partial f$.
Conceptually, we want $U$ to be a very small neighborhood of $f$.
As before, we obtain $K'$ and $\tilde K'$
by modifying $K$ and $\tilde K$ within $U$,
so that $\partial K'$ and $\partial \tilde K'$ do not meet in $U$.
Let $\psi_K : \partial K \cap U \to \partial K' \cap U$
denote an orientation-preserving homeomorphism which agrees with
the identity map on the endpoints of its domain,
and define $\psi_{\tilde K}$ similarly.
Extend $\psi_K$ and $\psi_{\tilde K}$ to all of
$\partial K$ and $\partial \tilde K$ via the identity map.
Define $z'_i = \psi_K(z_i)$
and $\tilde z'_i = \psi_{\tilde K}(\tilde z'_i)$
for $i = 1, 2, 3$.

Suppose that we have fixed a torus parametrization
$\kappa : \partial K \to \bbS^1,
\tilde \kappa : \partial \tilde K \to \bbS^1$
for $K$ and $\tilde K$.
Let $\kappa' = \psi_K^{-1} \circ \kappa$ and
$\tilde \kappa' = \psi_{\tilde K}^{-1} \circ \tilde \kappa$.
Then
$\kappa' : \partial K' \to \bbS^1,
\tilde \kappa' : \partial \tilde K' \to \bbS^1$
is a torus parametrization
for $K'$ and $\tilde K'$
so that $\kappa \equiv \kappa'$
and $\tilde \kappa \equiv \tilde \kappa'$ outside of $U$,
and so that $\kappa(z_i) = \kappa'(z'_i)$
and
$\tilde \kappa(\tilde z_i) = \tilde \kappa'(\tilde z'_i)$
for $i = 1, 2, 3$.

By the induction hypothesis,
there exists an indexable homeomorphism
$\phi' : \partial K' \to \partial \tilde K'$ sending
$z'_i \mapsto \tilde z'_i$ for $i = 1, 2, 3$, satisfying $\eta(\phi') \ge 0$.
Let $\gamma'$ be the graph of $\phi'$ in $\bbT = \bbS^1 \times \bbS^1$
according to the parametrization $\kappa', \tilde \kappa'$.
Our goal is to modify $\gamma'$ to obtain a new simple closed curve
$\gamma \subset \bbT$ which will be the graph,
according to the parametrization $\kappa, \tilde \kappa$,
of an
indexable homeomorphism $\phi : \partial K \to \partial \tilde K$,
in such a way that $\phi$ has the properties required to
complete the proof of the lemma.

We break the conclusion of the proof into two cases
according to Lemma \ref{cpc-lemma}.

\smallskip \noindent{\bf Case 1 of Lemma \ref{cpc-lemma}.}
Without loss of generality,
by interchanging the roles of $K$ and $\tilde K$ if necessary,
let $f$ be a finite bigon in $X$
satisfying $\cpc_{K}(f) = 0$,
with $\cpc_{\tilde K}(f)$ equal to either $0$ or $1$.
Consider the strips
$S_{\tilde K} = \bbS^1 \times \tilde \kappa(U \cap \partial \tilde K)
= \bbS^1 \times \tilde \kappa'(U \cap \partial \tilde K')
\subset \bbT$
and
$S_K = \kappa(U \cap \partial K) \times \bbS^1
= \kappa'(U \cap \partial K') \times \bbS^1
\subset \bbT$,
let $S_\cup = S_K \cup S_{\tilde K}$
denote their union,
and let $S_\cap = S_K \cap S_{\tilde K}$
denote their intersection.
Note that there is no point in
$\partial K' \cap \partial \tilde K'$
whose parametrization under $\kappa', \tilde \kappa'$
lies in $S_\cup$,
more formally that
$\{(\kappa'(z), \tilde \kappa'(z))
: z \in \partial K' \cap \partial \tilde K'\} \cap S_\cup = \varnothing$.
It follows that if we modify $\gamma'$
however we like within $S_\cup$,
leaving the points $\gamma' \cap \partial S_\cup$ fixed,
to obtain a new simple closed curve
$\gamma \subset \bbT$
which
\begin{itemize}
    \item
        is strictly increasing
        in the sense of Section \ref{chap:torus},
    \item
        contains (as is the case with $\gamma'$)
        the points $(\kappa'(z'_i), \tilde \kappa'(\tilde z'_i))$,
        equivalently $(\kappa(z_i), \tilde \kappa(\tilde z_i))$,
        for $i = 1, 2, 3$, and
    \item
        does not contain (as is the case with $\gamma'$)
        any of the points $(\kappa(z), \tilde \kappa(z))$
        for $z \in \partial K \cap \partial \tilde K$,
        noting that $\partial K' \cap \partial \tilde K'
        \subset \partial K \cap \partial \tilde K$,
\end{itemize}
then $\gamma$ will be the graph
of both of two indexable homeomorphisms
$\phi'' : \partial K' \to \partial \tilde K'$
and
$\phi : \partial K \to \partial \tilde K$,
where furthermore
$\eta(\phi'') = \eta(\phi') \ge 0$.

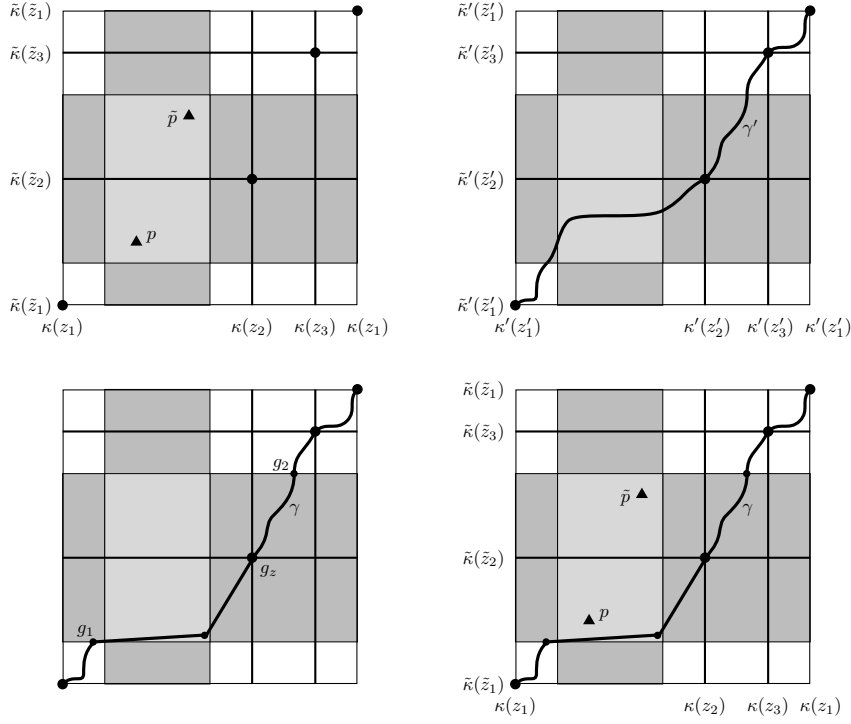
\begin{figure} \centering
    \scalebox{0.7} 
    {
        \begin{pspicture}(0,-6.775)(15.89,6.775)
            \definecolor{colour0}{rgb}{0.7411765,0.7411765,0.7411765}
            \definecolor{colour1}{rgb}{0.84705883,0.84705883,0.84705883}
            \psframe[linecolor=black, linewidth=0.002, fillstyle=solid,fillcolor=colour0, dimen=outer](6.6,5.025)(1.0,1.825)
            \psframe[linecolor=black, linewidth=0.002, fillstyle=solid,fillcolor=colour0, dimen=outer](3.8,6.625)(1.8,1.025)
            \psframe[linecolor=black, linewidth=0.002, fillstyle=solid,fillcolor=colour1, dimen=outer](3.8,5.025)(1.8,1.825)
            \psframe[linecolor=black, linewidth=0.02, dimen=outer](6.6,6.625)(1.0,1.025)
            \psline[linecolor=black, linewidth=0.04](4.6,6.625)(4.6,1.025)
            \psline[linecolor=black, linewidth=0.04](5.8,1.025)(5.8,6.625)
            \psline[linecolor=black, linewidth=0.04](1.0,5.825)(6.6,5.825)
            \psline[linecolor=black, linewidth=0.04](1.0,3.425)(6.6,3.425)
            \psdots[linecolor=black, dotsize=0.2](1.0,1.025)
            \psdots[linecolor=black, dotsize=0.2](4.6,3.425)
            \psdots[linecolor=black, dotsize=0.2](5.8,5.825)
            \psdots[linecolor=black, dotsize=0.2](6.6,6.625)
            \psdots[linecolor=black, dotstyle=triangle*, dotsize=0.2](2.4,2.225)
            \psdots[linecolor=black, dotstyle=triangle*, dotsize=0.2](3.4,4.625)
            \rput[bl](4.2,0.425){$\kappa(z_2)$}
            \rput[bl](5.4,0.425){$\kappa(z_3)$}
            \rput[bl](0.6,0.425){$\kappa(z_1)$}
            \rput[bl](6.4,0.425){$\kappa(z_1)$}
            \rput[bl](0.0,0.825){$\tilde\kappa(\tilde z_1)$}
            \rput[bl](0.0,3.225){$\tilde\kappa(\tilde z_2)$}
            \rput[bl](0.0,5.625){$\tilde\kappa(\tilde z_3)$}
            \rput[bl](0.0,6.425){$\tilde\kappa(\tilde z_1)$}
            \rput[bl](2.6,2.225){$p$}
            \rput[bl](3.0,4.425){$\tilde p$}
            \psframe[linecolor=black, linewidth=0.002, fillstyle=solid,fillcolor=colour0, dimen=outer](15.2,5.025)(9.6,1.825)
            \psframe[linecolor=black, linewidth=0.002, fillstyle=solid,fillcolor=colour0, dimen=outer](12.4,6.625)(10.4,1.025)
            \psframe[linecolor=black, linewidth=0.002, fillstyle=solid,fillcolor=colour1, dimen=outer](12.4,5.025)(10.4,1.825)
            \psframe[linecolor=black, linewidth=0.02, dimen=outer](15.2,6.625)(9.6,1.025)
            \psline[linecolor=black, linewidth=0.04](13.2,6.625)(13.2,1.025)
            \psline[linecolor=black, linewidth=0.04](14.4,1.025)(14.4,6.625)
            \psline[linecolor=black, linewidth=0.04](9.6,5.825)(15.2,5.825)
            \psline[linecolor=black, linewidth=0.04](9.6,3.425)(15.2,3.425)
            \psdots[linecolor=black, dotsize=0.2](9.6,1.025)
            \psdots[linecolor=black, dotsize=0.2](13.2,3.425)
            \psdots[linecolor=black, dotsize=0.2](14.4,5.825)
            \psdots[linecolor=black, dotsize=0.2](15.2,6.625)
            \rput[bl](12.8,0.425){$\kappa'(z'_2)$}
            \rput[bl](14.0,0.425){$\kappa'(z'_3)$}
            \rput[bl](9.2,0.425){$\kappa'(z'_1)$}
            \rput[bl](15.1,0.425){$\kappa'(z'_1)$}
            \rput[bl](8.5,0.825){$\tilde\kappa'(\tilde z'_1)$}
            \rput[bl](8.5,3.225){$\tilde\kappa'(\tilde z'_2)$}
            \rput[bl](8.5,5.625){$\tilde\kappa'(\tilde z'_3)$}
            \rput[bl](8.5,6.425){$\tilde\kappa'(\tilde z'_1)$}
            \psbezier[linecolor=black, linewidth=0.06](9.6,1.025)(9.8,1.225)(10.0,1.025)(10.0,1.225)(10.0,1.425)(10.0,1.625)(10.2,1.825)
            \psbezier[linecolor=black, linewidth=0.06](10.2,1.825)(10.4,2.025)(10.4,2.425)(10.6,2.625)(10.8,2.825)(12.0,2.625)(12.4,2.825)(12.8,3.025)(12.8,3.225)(13.2,3.425)
            \psbezier[linecolor=black, linewidth=0.06](13.2,3.425)(13.6,3.825)(13.4,4.025)(13.6,4.225)(13.8,4.425)(14.0,4.625)(14.0,5.025)(14.0,5.425)(14.2,5.425)(14.4,5.825)
            \psbezier[linecolor=black, linewidth=0.06](14.4,5.825)(14.6,6.025)(14.8,5.825)(15.0,6.025)(15.2,6.225)(15.0,6.425)(15.2,6.625)
            \rput[bl](13.911111,4.225){$\gamma'$}
            \psframe[linecolor=black, linewidth=0.002, fillstyle=solid,fillcolor=colour0, dimen=outer](6.6,-2.175)(1.0,-5.375)
            \psframe[linecolor=black, linewidth=0.002, fillstyle=solid,fillcolor=colour0, dimen=outer](3.8,-0.575)(1.8,-6.175)
            \psframe[linecolor=black, linewidth=0.002, fillstyle=solid,fillcolor=colour1, dimen=outer](3.8,-2.175)(1.8,-5.375)
            \psframe[linecolor=black, linewidth=0.02, dimen=outer](6.6,-0.575)(1.0,-6.175)
            \psline[linecolor=black, linewidth=0.04](4.6,-0.575)(4.6,-6.175)
            \psline[linecolor=black, linewidth=0.04](5.8,-6.175)(5.8,-0.575)
            \psline[linecolor=black, linewidth=0.04](1.0,-1.375)(6.6,-1.375)
            \psline[linecolor=black, linewidth=0.04](1.0,-3.775)(6.6,-3.775)
            \psdots[linecolor=black, dotsize=0.2](1.0,-6.175)
            \psdots[linecolor=black, dotsize=0.2](4.6,-3.775)
            \psdots[linecolor=black, dotsize=0.2](5.8,-1.375)
            \psdots[linecolor=black, dotsize=0.2](6.6,-0.575)
            \psbezier[linecolor=black, linewidth=0.06](1.0,-6.175)(1.2,-5.975)(1.4,-6.175)(1.4,-5.975)(1.4,-5.775)(1.4,-5.575)(1.6,-5.375)
            \psbezier[linecolor=black, linewidth=0.06](4.6,-3.775)(5.0,-3.375)(4.8,-3.175)(5.0,-2.975)(5.2,-2.775)(5.4,-2.575)(5.4,-2.175)(5.4,-1.775)(5.6,-1.775)(5.8,-1.375)
            \psbezier[linecolor=black, linewidth=0.06](5.8,-1.375)(6.0,-1.175)(6.2,-1.375)(6.4,-1.175)(6.6,-0.975)(6.4,-0.775)(6.6,-0.575)
            \rput[bl](5.311111,-2.975){$\gamma$}
            \rput[bl](4.7555556,-4.1416665){$g_z$}
            \rput[bl](1.28,-5.277222){$g_1$}
            \rput[bl](4.9911113,-2.1216667){$g_2$}
            \psdots[linecolor=black, dotsize=0.14](1.5822222,-5.375)
            \psdots[linecolor=black, dotsize=0.14](5.3955555,-2.175)
            \psdots[linecolor=black, dotsize=0.14](3.6977777,-5.2505555)
            \psline[linecolor=black, linewidth=0.06](1.6,-5.375)(3.7333333,-5.241667)(4.6222224,-3.775)
            \psframe[linecolor=black, linewidth=0.002, fillstyle=solid,fillcolor=colour0, dimen=outer](15.2,-2.175)(9.6,-5.375)
            \psframe[linecolor=black, linewidth=0.002, fillstyle=solid,fillcolor=colour0, dimen=outer](12.4,-0.575)(10.4,-6.175)
            \psframe[linecolor=black, linewidth=0.002, fillstyle=solid,fillcolor=colour1, dimen=outer](12.4,-2.175)(10.4,-5.375)
            \psframe[linecolor=black, linewidth=0.02, dimen=outer](15.2,-0.575)(9.6,-6.175)
            \psline[linecolor=black, linewidth=0.04](13.2,-0.575)(13.2,-6.175)
            \psline[linecolor=black, linewidth=0.04](14.4,-6.175)(14.4,-0.575)
            \psline[linecolor=black, linewidth=0.04](9.6,-1.375)(15.2,-1.375)
            \psline[linecolor=black, linewidth=0.04](9.6,-3.775)(15.2,-3.775)
            \psdots[linecolor=black, dotsize=0.2](9.6,-6.175)
            \psdots[linecolor=black, dotsize=0.2](13.2,-3.775)
            \psdots[linecolor=black, dotsize=0.2](14.4,-1.375)
            \psdots[linecolor=black, dotsize=0.2](15.2,-0.575)
            \rput[bl](12.8,-6.775){$\kappa(z_2)$}
            \rput[bl](14.0,-6.775){$\kappa(z_3)$}
            \rput[bl](9.2,-6.775){$\kappa(z_1)$}
            \rput[bl](15.0,-6.775){$\kappa(z_1)$}
            \rput[bl](8.6,-6.375){$\tilde\kappa(\tilde z_1)$}
            \rput[bl](8.6,-3.975){$\tilde\kappa(\tilde z_2)$}
            \rput[bl](8.6,-1.575){$\tilde\kappa(\tilde z_3)$}
            \rput[bl](8.6,-0.775){$\tilde\kappa(\tilde z_1)$}
            \psbezier[linecolor=black, linewidth=0.06](9.6,-6.175)(9.8,-5.975)(10.0,-6.175)(10.0,-5.975)(10.0,-5.775)(10.0,-5.575)(10.2,-5.375)
            \psbezier[linecolor=black, linewidth=0.06](13.2,-3.775)(13.6,-3.375)(13.4,-3.175)(13.6,-2.975)(13.8,-2.775)(14.0,-2.575)(14.0,-2.175)(14.0,-1.775)(14.2,-1.775)(14.4,-1.375)
            \psbezier[linecolor=black, linewidth=0.06](14.4,-1.375)(14.6,-1.175)(14.8,-1.375)(15.0,-1.175)(15.2,-0.975)(15.0,-0.775)(15.2,-0.575)
            \rput[bl](13.911111,-2.975){$\gamma$}
            \psdots[linecolor=black, dotsize=0.14](10.182222,-5.375)
            \psdots[linecolor=black, dotsize=0.14](13.995556,-2.175)
            \psdots[linecolor=black, dotsize=0.14](12.297778,-5.2505555)
            \psline[linecolor=black, linewidth=0.06](10.2,-5.375)(12.333333,-5.241667)(13.222222,-3.775)
            \psdots[linecolor=black, dotstyle=triangle*, dotsize=0.2](11.0,-4.975)
            \psdots[linecolor=black, dotstyle=triangle*, dotsize=0.2](12.0,-2.575)
            \rput[bl](11.2,-4.975){$p$}
            \rput[bl](11.6,-2.775){$\tilde p$}
        \end{pspicture}
    }
    \caption{\label{tppl-case-1-example-figure}
        {\bf An example construction of $\gamma$
        in the proof of the Three Point Prescription Lemma \ref{tppl}
        in Case 1 of Lemma \ref{cpc-lemma}.}
        Here the lightly shaded region is $S_\cap$,
        and the darker regions comprise $S_\cup \setminus S_\cap$.
        Here $\cpc_{\tilde K}(f) = 1$,
        and in the notation of the proof,
        we have chosen $j = 1$, noting that
        $z_1, \tilde z_1 \not \in U$.
    }
\end{figure}

Next, note that the only points of
$\partial K \cap \partial \tilde K$
whose parametrizations under $\kappa, \tilde \kappa$
lie in $S_\cup$ are the corners of the bigon $f$,
call them $P$ and $\tilde P$.
Let $p = (\kappa(P), \tilde \kappa(P)) \in \bbT$,
and define $\tilde p$ similarly.
Our goal is to obtain $\gamma$ as above
so that the two points
$p$ and $\tilde p$
lie on the same side of $\gamma$.
More formally, let $1 \le j \le 3$ be chosen
so that
$z_j, \tilde z_j \not \in U$.
Then, in the language of Section \ref{chap:torus},
we will arrange so that either
both $p$ and $\tilde p$ lie in
$\Delta_\downarrow(z_j, \gamma)$,
or both lie in
$\Delta_\uparrow(z_j, \gamma)$.
For example,
if $\gamma'$
does not meet $S_\cap \ni p, \tilde p$, then
this condition is satisfied automatically
by taking $\gamma = \gamma'$.
Having found such a $\gamma$, it will follow from
Lemma \ref{prop:computing index from torus}
that $\eta(\phi) = \eta(\phi'')$,
completing the proof in this case.

Without loss of generality,
by enlarging $U$ slightly if necessary,
we may assume that
$\partial S_K \cap \partial S_{\tilde K} \cap \gamma' = \varnothing$.
Then, because $\gamma'$ is strictly increasing,
we get that $\gamma'$ meets $\partial S_\cap$
either $0$ or $2$ times. In the former case,
we are done by our earlier observation, so
suppose $\gamma'$ meets $\partial S_\cap$ twice.
It follows
by the pigeonhole principle
that $\gamma'$ meets $\partial S_\cup$ twice,
since $\gamma'$ meets each of $S_K$ and $S_{\tilde K}$ twice.

Let
$\{v_\nearrow, v_\nwarrow, v_\swarrow, v_\searrow\} = \partial S_K \cap \partial S_{\tilde K}$
denote the
corners of $S_\cap$ in the natural way.
Let $g_1, g_2 \in \gamma' \cap \partial S_\cup$ denote the points
where $\gamma'$ enters and exits $S_\cup$, respectively.
If there is an $i$ so that
$(\kappa(z_i), \tilde \kappa(\tilde z_i)) \in S_\cup$, then
let $g_z$ denote $(\kappa(z_i), \tilde \kappa(\tilde z_i))$,
otherwise let $g_z$ remain undefined.
Note that if $g_z$ is defined then
$g_z \in S_{\tilde K}$
by our assumption that $\cpc_K(f) = 0$.
We are now ready to describe the construction of $\gamma$.
We refer the reader to Figure \ref{tppl-case-1-example-figure} for
an example.

First, suppose that $g_z$ is undefined
(equivalently that $\cpc_{\tilde K}(f) = 0$).
Then, let $\gamma$ be obtained by replacing
$[g_1 \to g_2]_{\gamma'}$ by
a strictly increasing arc
starting at $g_1$,
progressing first to
a point located infinitesimally northwest of $v_\searrow$,
then to $g_2$.
This $\gamma$ satisfies our requirements and we are done.

Next suppose that $g_z$ is defined.
One of two cases occurs:
starting at $g_1$,
either $\gamma'$ passes through $g_z$ and then through $S_\cap$,
or it passes through $S_\cap$ and then through $g_z$.
In the first case,
construct $\gamma$ from $\gamma'$ by
replacing $[g_z \to g_2]_{\gamma'}$ by
a strictly increasing arc
starting at $g_z$,
progressing first to
a point located infinitesimally southeast of $v_\nwarrow$,
then to $g_2$.
In the second case,
construct $\gamma$ from $\gamma'$ by
replacing $[g_1 \to g_z]_{\gamma'}$ by
a strictly increasing arc
starting at $g_1$,
progressing first to
a point located infinitesimally northwest of $v_\searrow$,
then to $g_z$.
In both cases $\gamma$ satisfies our requirements and we are done.

\smallskip \noindent{\bf Case 2 of Lemma \ref{cpc-lemma}.}
Without loss of generality,
by interchanging the roles of $K$ and $\tilde K$ if necessary,
let $f$ be a finite bigon of $X$
satisfying $f \subset K \cap \tilde K$
with $\cpc(f)$ equal to either
$(1, 1)$ or $(0, 2)$.

Note that if $\cpc(f) = (0, 2)$, then
\begin{itemize}
    \item
        $U \cap \partial K$ is a strict subset of
        exactly one of
        $[z_1 \to z_2]_{\partial K}$,
        $[z_2 \to z_3]_{\partial K}$,
        or
        $[z_3 \to z_1]_{\partial K}$,
        and
    \item
        $U \cap \partial \tilde K$ is a strict superset of
        exactly one of
        $[\tilde z_1 \to \tilde z_2]_{\partial \tilde K}$,
        $[\tilde z_2 \to \tilde z_3]_{\partial \tilde K}$,
        or
        $[\tilde z_3 \to \tilde z_1]_{\partial \tilde K}$.
\end{itemize}
By cyclically and simultaneously permuting the indices
of the $z_i$ and $\tilde z_i$ if necessary,
we assume from now on without loss of generality that
if $\cpc(f) = (0, 2)$, then
$U \cap \partial \tilde K
\supset [\tilde z_2 \to \tilde z_3]_{\partial \tilde K}$.

First, suppose that either
\begin{itemize}
    \item 
        $\cpc(f) = (1, 1)$, and
        there does not exist an $i$ so that
        $(\kappa(z_i), \tilde \kappa(\tilde z_i)) \in S_\cap$,
        or
    \item
        $\cpc(f) = (0, 2)$, and
        $U \cap \partial K \not \subset [z_2 \to z_3]_{\partial K}$.
\end{itemize}
In both of these cases, it is possible to obtain
$\gamma$ from $\gamma'$ via a construction
very similar to that given in Case 1. This is left to the reader.

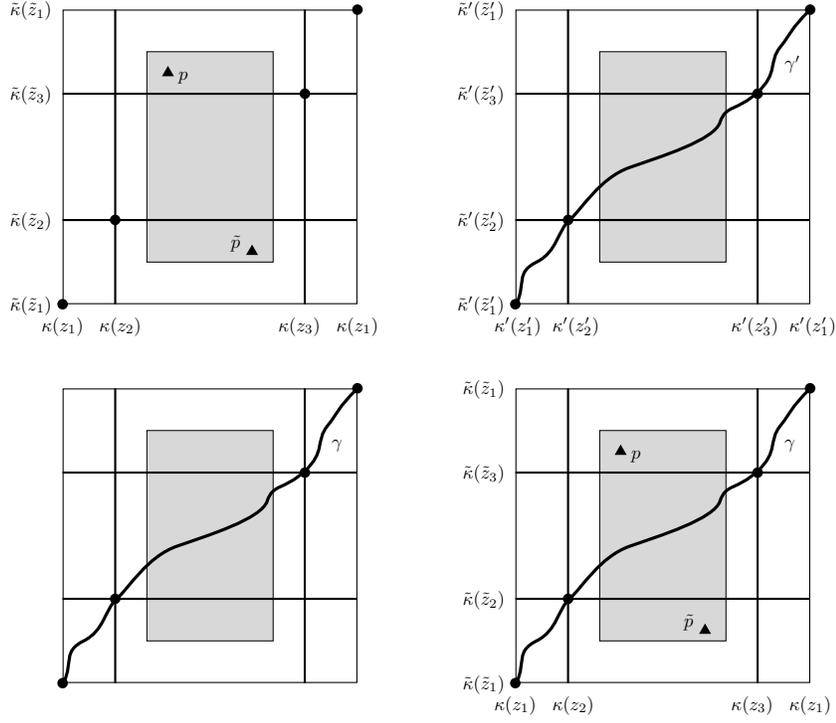
\begin{figure} \centering
    %
    %
    \scalebox{0.7} 
    {
        \begin{pspicture}(0,-6.775)(15.89,6.775)
            \definecolor{colour1}{rgb}{0.84705883,0.84705883,0.84705883}
            \psframe[linecolor=black, linewidth=0.002, fillstyle=solid,fillcolor=colour1, dimen=outer](5.0,5.825)(2.6,1.825)
            \psframe[linecolor=black, linewidth=0.02, dimen=outer](6.6,6.625)(1.0,1.025)
            \psline[linecolor=black, linewidth=0.04](2.0,6.625)(2.0,1.025)
            \psline[linecolor=black, linewidth=0.04](5.6,1.025)(5.6,6.625)
            \psline[linecolor=black, linewidth=0.04](1.0,5.025)(6.6,5.025)
            \psline[linecolor=black, linewidth=0.04](1.0,2.625)(6.6,2.625)
            \psdots[linecolor=black, dotsize=0.2](1.0,1.025)
            \psdots[linecolor=black, dotsize=0.2](2.0,2.625)
            \psdots[linecolor=black, dotsize=0.2](5.6,5.025)
            \psdots[linecolor=black, dotsize=0.2](6.6,6.625)
            \psdots[linecolor=black, dotstyle=triangle*, dotsize=0.2](3.0,5.425)
            \psdots[linecolor=black, dotstyle=triangle*, dotsize=0.2](4.6,2.025)
            \rput[bl](0.6,0.425){$\kappa(z_1)$}
            \rput[bl](1.7,0.425){$\kappa(z_2)$}
            \rput[bl](5.1,0.425){$\kappa(z_3)$}
            \rput[bl](6.2,0.425){$\kappa(z_1)$}
            \rput[bl](0.0,0.825){$\tilde\kappa(\tilde z_1)$}
            \rput[bl](0.0,2.425){$\tilde\kappa(\tilde z_2)$}
            \rput[bl](0.0,4.825){$\tilde\kappa(\tilde z_3)$}
            \rput[bl](0.0,6.425){$\tilde\kappa(\tilde z_1)$}
            \rput[bl](3.2,5.225){$p$}
            \rput[bl](4.2,2.025){$\tilde p$}
            \psframe[linecolor=black, linewidth=0.02, dimen=outer](15.2,6.625)(9.6,1.025)
            \psdots[linecolor=black, dotsize=0.2](9.6,1.025)
            \psdots[linecolor=black, dotsize=0.2](15.2,6.625)
            \rput[bl](9.2,0.425){$\kappa'(z'_1)$}
            \rput[bl](10.3,0.425){$\kappa'(z'_2)$}
            \rput[bl](13.7,0.425){$\kappa'(z'_3)$}
            \rput[bl](14.8,0.425){$\kappa'(z'_1)$}
            \rput[bl](8.5,0.825){$\tilde\kappa'(\tilde z'_1)$}
            \rput[bl](8.5,2.425){$\tilde\kappa'(\tilde z'_2)$}
            \rput[bl](8.5,4.825){$\tilde\kappa'(\tilde z'_3)$}
            \rput[bl](8.5,6.425){$\tilde\kappa'(\tilde z'_1)$}
            \rput[bl](14.711111,5.425){$\gamma'$}
            \psframe[linecolor=black, linewidth=0.02, dimen=outer](6.6,-0.575)(1.0,-6.175)
            \psdots[linecolor=black, dotsize=0.2](1.0,-6.175)
            \psdots[linecolor=black, dotsize=0.2](6.6,-0.575)
            \psframe[linecolor=black, linewidth=0.02, dimen=outer](15.2,-0.575)(9.6,-6.175)
            \psdots[linecolor=black, dotsize=0.2](9.6,-6.175)
            \psdots[linecolor=black, dotsize=0.2](15.2,-0.575)
            \rput[bl](9.2,-6.775){$\kappa(z_1)$}
            \rput[bl](10.3,-6.775){$\kappa(z_2)$}
            \rput[bl](13.7,-6.775){$\kappa(z_3)$}
            \rput[bl](14.8,-6.775){$\kappa(z_1)$}
            \rput[bl](8.6,-6.375){$\tilde\kappa(\tilde z_1)$}
            \rput[bl](8.6,-4.775){$\tilde\kappa(\tilde z_2)$}
            \rput[bl](8.6,-2.375){$\tilde\kappa(\tilde z_3)$}
            \rput[bl](8.6,-0.775){$\tilde\kappa(\tilde z_1)$}
            \psframe[linecolor=black, linewidth=0.002, fillstyle=solid,fillcolor=colour1, dimen=outer](13.6,5.825)(11.2,1.825)
            \psline[linecolor=black, linewidth=0.04](10.6,6.625)(10.6,1.025)
            \psline[linecolor=black, linewidth=0.04](14.2,1.025)(14.2,6.625)
            \psline[linecolor=black, linewidth=0.04](9.6,5.025)(15.2,5.025)
            \psline[linecolor=black, linewidth=0.04](9.6,2.625)(15.2,2.625)
            \psdots[linecolor=black, dotsize=0.2](10.6,2.625)
            \psdots[linecolor=black, dotsize=0.2](14.2,5.025)
            \psbezier[linecolor=black, linewidth=0.06](9.6285715,1.0535715)(9.8,1.425)(9.6,1.625)(10.0,1.825000000000001)(10.4,2.025)(10.4,2.425)(10.6285715,2.625)(10.857142,2.825)(11.2,3.425)(11.771428,3.625)(12.342857,3.825)(13.3714285,4.1392856)(13.485714,4.482143)(13.6,4.825)(13.8,4.6821427)(14.2,5.053571)(14.6,5.425)(14.4,5.625)(14.6285715,5.910714)(14.857142,6.196429)(14.8,6.225)(15.2,6.625)
            \rput[bl](6.111111,-1.775){$\gamma$}
            \psframe[linecolor=black, linewidth=0.002, fillstyle=solid,fillcolor=colour1, dimen=outer](5.0,-1.375)(2.6,-5.375)
            \psline[linecolor=black, linewidth=0.04](2.0,-0.575)(2.0,-6.175)
            \psline[linecolor=black, linewidth=0.04](5.6,-6.175)(5.6,-0.575)
            \psline[linecolor=black, linewidth=0.04](1.0,-2.175)(6.6,-2.175)
            \psline[linecolor=black, linewidth=0.04](1.0,-4.575)(6.6,-4.575)
            \psdots[linecolor=black, dotsize=0.2](2.0,-4.575)
            \psdots[linecolor=black, dotsize=0.2](5.6,-2.175)
            \psbezier[linecolor=black, linewidth=0.06](1.0285715,-6.1464286)(1.2,-5.775)(1.0,-5.575)(1.4,-5.375)(1.8,-5.175)(1.8,-4.775)(2.0285714,-4.575)(2.2571428,-4.375)(2.6,-3.775)(3.1714287,-3.575)(3.7428572,-3.375)(4.7714286,-3.0607142)(4.885714,-2.7178571)(5.0,-2.375)(5.2,-2.517857)(5.6,-2.1464286)(6.0,-1.775)(5.8,-1.575)(6.0285716,-1.2892857)(6.257143,-1.0035714)(6.2,-0.975)(6.6,-0.575)
            \rput[bl](14.711111,-1.775){$\gamma$}
            \psframe[linecolor=black, linewidth=0.002, fillstyle=solid,fillcolor=colour1, dimen=outer](13.6,-1.375)(11.2,-5.375)
            \psline[linecolor=black, linewidth=0.04](10.6,-0.575)(10.6,-6.175)
            \psline[linecolor=black, linewidth=0.04](14.2,-6.175)(14.2,-0.575)
            \psline[linecolor=black, linewidth=0.04](9.6,-2.175)(15.2,-2.175)
            \psline[linecolor=black, linewidth=0.04](9.6,-4.575)(15.2,-4.575)
            \psdots[linecolor=black, dotsize=0.2](10.6,-4.575)
            \psdots[linecolor=black, dotsize=0.2](14.2,-2.175)
            \psbezier[linecolor=black, linewidth=0.06](9.6285715,-6.1464286)(9.8,-5.775)(9.6,-5.575)(10.0,-5.375)(10.4,-5.175)(10.4,-4.775)(10.6285715,-4.575)(10.857142,-4.375)(11.2,-3.775)(11.771428,-3.575)(12.342857,-3.375)(13.3714285,-3.0607142)(13.485714,-2.7178571)(13.6,-2.375)(13.8,-2.517857)(14.2,-2.1464286)(14.6,-1.775)(14.4,-1.575)(14.6285715,-1.2892857)(14.857142,-1.0035714)(14.8,-0.975)(15.2,-0.575)
            \psdots[linecolor=black, dotstyle=triangle*, dotsize=0.2](11.6,-1.775)
            \psdots[linecolor=black, dotstyle=triangle*, dotsize=0.2](13.2,-5.175)
            \rput[bl](11.8,-1.975){$p$}
            \rput[bl](12.8,-5.175){$\tilde p$}
        \end{pspicture}
    }
    \caption{\label{tppl-case-2-example-figure}
        {\bf An example construction of $\gamma$
        in the proof of the Three Point Prescription Lemma \ref{tppl}
        in Case 2 of Lemma \ref{cpc-lemma}.}
        Here the shaded region is $S_\cap$.
        In this example $\cpc(f) = (0, 2)$
        with $U \cap \partial K \subset [z_2 \to z_3]_{\partial K}$.
    }
\end{figure}

Finally, we have that either
\begin{itemize}
    \item 
        $\cpc(f) = (1, 1)$, and
        there exists an $i$ so that
        $(\kappa(z_i), \tilde \kappa(\tilde z_i))\allowbreak \in\allowbreak S_\cap$,
        or
    \item
        $\cpc(f) = (0, 2)$, and
        $U \cap \partial K \subset [z_2 \to z_3]_{\partial K}$.
\end{itemize}
In both of these cases, we set $\gamma = \gamma'$.
We refer the reader to Figure \ref{tppl-case-2-example-figure} for an example.
Let $P, \tilde P$ denote the points
of $\partial K \cap \partial \tilde K \cap f$
where $\partial K$ enters $\tilde K$,
respectively where
$\partial \tilde K$ enters $K$,
using notation analogous to that of Section \ref{chap:torus}.
Because of the orientations
on $\partial f \cap \partial K$
and $\partial f \cap \partial \tilde K$
forced by the condition that $f \subset K \cap \tilde K$,
we have that
$p = (\kappa(P), \tilde \kappa(P))$
is in the northwest corner of $S_\cap$,
and that $\tilde p = (\kappa(\tilde P), \tilde \kappa(\tilde P))$
is in the southeast corner.
Furthermore, in both cases
(whether $\cpc(f)$ is $(1, 1)$ or is $(0, 2)$),
the curve $\gamma$
separates $S_\cap$
so that $p \in \Delta_\uparrow(z_1, \gamma)$
and $\tilde p \in \Delta_\downarrow(z_1, \gamma)$.
It follows from Lemma \ref{prop:computing index from torus}
that $\eta(\phi) = \eta(\phi') + 1$, and we are done.
\qed

\begin{remark} \label{closing-remark}
    The conditions of Lemma \ref{lem:3p}
    that $\tilde z_1, \tilde z_2, \tilde z_3 \not \in \partial K$,
    that $z_1, z_2, z_3 \allowbreak \not \in \partial \tilde K$, and
    that $K$ and $\tilde K$ are in transverse position
    may appear to the reader to be too strong.
    There are several reasons that the lemma is stated as it is.

    First, it may seem that even if those conditions are violated,
    it might be be possible to
    modify $K$ and $\tilde K$ ``very slightly''
    so that the stated hypotheses of Lemma \ref{lem:3p}
    \emph{do} hold, and
then, for example, apply the lemma to the modified Jordan domains,
hoping to recover the desired conclusion
for our original $K$ and $\tilde K$
by reversing our modifications to them.
A major issue with this approach is that
Lemma \ref{lem:3p} as stated does not give any
lower bound on $|\phi(z) - z|$;
the author is not aware of a simple amendment to the proof
that would provide one.
Because of this, it is \emph{a priori} not trivial to arrange the modifications
to $K$ and $\tilde K$ mentioned earlier
so that they are guaranteed to be reversible
without altering $\eta(\phi)$.

Next, it is not clear what a good set of hypotheses is
for a strengthened version of
Lemma \ref{lem:3p}. For the conclusion
of the lemma to hold, it is necessary that there be \emph{some}
indexable $\phi : \partial K \to \partial \tilde K$
identifying $z_i \mapsto \tilde z_i$, and
without the transverse position hypothesis,
it is not trivial to give necessary and sufficient conditions for the
existence of such a $\phi$. For example, no such $\phi$ exists if
$[z_1 \to z_2]_{\partial K}
\subset [\tilde z_1 \to \tilde z_2]_{\partial \tilde K}$.
For a more involved example,
treating real numbers as points in the complex plane, let
$z_1 = 2, z_2 = 0, \tilde z_1 = 1, \tilde z_2 = 3$, so that
$[z_1 \to z_2]_{\partial K} \subset \bbR$ and
$[\tilde z_1 \to \tilde z_2]_{\partial \tilde K} \subset \bbR$.
Then no indexable
$\phi : \partial K \to \partial \tilde K$ exists which identifies
$z_1 \mapsto \tilde z_1$ and $z_2 \mapsto \tilde z_2$.

It is possible that Lemma \ref{lem:3p} holds under the hypothesis that there
exists \emph{any} indexable $\phi : \partial K \to \partial \tilde K$
identifying $z_i \mapsto \tilde z_i$.
However,
this formulation would be somewhat unsatisfying absent an understanding
of when such a $\phi$ exists, and
the author does not know of a modification of the proof given in this article
which could prove this stronger statement.
\end{remark}

\begin{bibdiv}
\begin{biblist}

\bib{MR0273510}{article}{
    author={Andreev, E. M.},
    title={Convex polyhedra of finite volume in Loba\v cevski\u\i\ space},
    language={Russian},
    journal={Mat. Sb. (N.S.)},
    volume={83 (125)},
    date={1970},
    pages={256--260},
    review={\MR{0273510 (42 \#8388)}},
}

\bib{MR2850125}{book}{
    author={Farb, Benson},
    author={Margalit, Dan},
    title={A primer on mapping class groups},
    series={Princeton Mathematical Series},
    volume={49},
    publisher={Princeton University Press},
    place={Princeton, NJ},
    date={2012},
    pages={xiv+472},
    isbn={978-0-691-14794-9},
    review={\MR{2850125}},
}

\bib{MR1207210}{article}{
    author={He, Zheng-Xu},
    author={Schramm, Oded},
    title={Fixed points, Koebe uniformization and circle packings},
    journal={Ann. of Math. (2)},
    volume={137},
    date={1993},
    number={2},
    pages={369--406},
    issn={0003-486X},
    review={\MR{1207210 (96b:30015)}},
    doi={10.2307/2946541},
}

\bib{koebe-1936}{article}{
    author={Koebe, Paul},
    title={Kontaktprobleme der Konformen Abbildung},
    language={German},
    journal={Ber. Verh. S\"achs. Akad. Wiss. Leipzig},
    volume={88},
    date={1936},
    pages={141--164},
}

\bib{MR2900233}{article}{
    author={Merenkov, Sergei},
    title={Planar relative Schottky sets and quasisymmetric maps},
    journal={Proc. Lond. Math. Soc. (3)},
    volume={104},
    date={2012},
    number={3},
    pages={455--485},
    issn={0024-6115},
    review={\MR{2900233}},
    doi={10.1112/plms/pdr038},
}

\bib{mishchenko-dissertation}{thesis}{
    author={Mishchenko, Andrey M.},
    title={Rigidity of thin disk configurations},
    date={2012},
    type={Ph.D.\ dissertation},
    organization={University of Michigan, Ann Arbor},
    note={Available online at \url{http://hdl.handle.net/2027.42/95930}.},
}

\bib{mishchenko-rigidity-2012}{article}{
    author={Mishchenko, Andrey M.},
    title={Rigidity of thin disk configurations, via fixed-point index},
    date={2013},
    note={Preprint, \tt arXiv:1302.2380 [math.MG]},
}

\bib{MR2884870}{article}{
    author={Rohde, Steffen},
    title={Oded Schramm: from circle packing to SLE},
    journal={Ann. Probab.},
    volume={39},
    date={2011},
    number={5},
    pages={1621--1667},
    issn={0091-1798},
    review={\MR{2884870}},
    doi={10.1007/978-1-4419-9675-6\_1},
}

\bib{MR1303402}{article}{
    author={Sachs, Horst},
    title={Coin graphs, polyhedra, and conformal mapping},
    note={Algebraic and topological methods in graph theory (Lake Bled, 1991)},
    journal={Discrete Math.},
    volume={134},
    date={1994},
    number={1-3},
    pages={133--138},
    issn={0012-365X},
    review={\MR{1303402 (95j:52020)}},
    doi={10.1016/0012-365X(93)E0068-F},
}

\bib{MR1076089}{article}{
    author={Schramm, Oded},
    title={Rigidity of infinite (circle) packings},
    journal={J. Amer. Math. Soc.},
    volume={4},
    date={1991},
    number={1},
    pages={127--149},
    issn={0894-0347},
    review={\MR{1076089 (91k:52027)}},
    doi={10.2307/2939257},
}

\bib{MR2131318}{book}{
    author={Stephenson, Kenneth},
    title={Introduction to circle packing: the theory of discrete analytic functions},
    publisher={Cambridge University Press},
    place={Cambridge},
    date={2005},
    pages={xii+356},
    isbn={978-0-521-82356-2},
    isbn={0-521-82356-0},
    review={\MR{2131318 (2006a:52022)}},
}

\bib{MR0051934}{article}{
    author={Strebel, Kurt},
    title={\"Uber das Kreisnormierungsproblem der konformen Abbildung},
    language={German},
    journal={Ann. Acad. Sci. Fennicae. Ser. A. I. Math.-Phys.},
    volume={1951},
    date={1951},
    number={101},
    pages={22},
    review={\MR{0051934 (14,549j)}},
}

\bib{thurston-gt3m-notes}{misc}{
    author={Thurston, William},
    title={The Geometry and Topology of Three-Manifolds},
    organization={Princeton University},
    status={unpublished lecture notes, version 1.1},
    year={1980},
    note={At the time of writing, these notes were available online at \url{http://library.msri.org/books/gt3m/}.},
}

\end{biblist}
\end{bibdiv}

\end{document}